\def\@secnumfont{\bfseries\scshape}
\def\section{\@startsection{section}{1}%
  \z@{.7\linespacing\@plus\linespacing}{.5\linespacing}%
  {\normalfont\large\bfseries\scshape\centering}}
\def\subsection{\@startsection{subsection}{2}%
  \z@{.5\linespacing\@plus.7\linespacing}{-.5em}%
  {\normalfont\bfseries\scshape}}
\def\subsubsection{\@startsection{subsubsection}{3}%
  \z@{.5\linespacing\@plus.7\linespacing}{-.5em}%
  {\normalfont\scshape}}
\def\specialsection{\@startsection{section}{1}%
  \z@{\linespacing\@plus\linespacing}{.5\linespacing}%
  {\normalfont\centering\large\bfseries\scshape}}
\renewenvironment{proof}[1][\proofname]{\par
\pushQED{\qed}%
\normalfont \topsep4\p@\@plus4\p@\relax
\trivlist
\item[\hskip\labelsep
\bfseries
#1\@addpunct{.}]\ignorespaces
}{%
\popQED\endtrivlist\@endpefalse
}
\newcommand \Dotfill {\leavevmode \leaders \hb@xt@ 6pt{\hss .\hss }\hfill \kern \z@}
\def\@tocline#1#2#3#4#5#6#7{\relax
  \ifnum #1>\c@tocdepth 
  \else
    \par \addpenalty\@secpenalty\addvspace{#2}%
    \begingroup \hyphenpenalty\@M
    \@ifempty{#4}{%
      \@tempdima\csname r@tocindent\number#1\endcsname\relax
    }{%
      \@tempdima#4\relax
    }%
    \parindent\z@ \leftskip#3\relax \advance\leftskip\@tempdima\relax
    \rightskip\@pnumwidth plus4em \parfillskip-\@pnumwidth
    #5\leavevmode\hskip-\@tempdima
      \ifcase #1
       \or\or \hskip 1.65em \or \hskip 3.3em \else \hskip 4.95em \fi%
      #6\nobreak\relax
    \Dotfill
    \hbox to\@pnumwidth{\@tocpagenum{#7}}\par
    \nobreak
    \endgroup
  \fi}
\def\l@section{\@tocline{1}{0pt}{1pc}{}{\scshape}}
\renewcommand{\tocsection}[3]{%
\indentlabel{\@ifnotempty{#2}{\ignorespaces#1 #2.\hskip 0.7em}}#3}
\def\l@subsection{\@tocline{2}{0pt}{1pc}{5pc}{}}
\def\l@subsubsection{\@tocline{3}{0pt}{1pc}{7pc}{}}
\numberwithin{equation}{section}
\newtheoremstyle{mytheorem}{.7\linespacing\@plus.3\linespacing}{.7\linespacing\@plus.3\linespacing}%
     {\itshape}
     {}
     {\bfseries}
     {. }
     {0.3ex}
     {\thmname{{\bfseries #1}}\thmnumber{ {\bfseries #2}}\thmnote{ (#3)}}  
\theoremstyle{mytheorem}
\newtheorem{theorem}{Theorem}[section]
\newtheorem{lemma}[theorem]{Lemma}
\newtheorem{proposition}[theorem]{Proposition}
\newtheorem{remark}[theorem]{Remark}
\newcommand{\bbE}{{\ensuremath{\mathbb E}} }
\newcommand{\bbN}{{\ensuremath{\mathbb N}} }
\newcommand{\bbP}{{\ensuremath{\mathbb P}} }
\newcommand{\cB}{{\ensuremath{\mathcal B}} }
\newcommand{\cC}{{\ensuremath{\mathcal C}} }
\newcommand{\cD}{{\ensuremath{\mathcal D}} }
\newcommand{\cF}{{\ensuremath{\mathcal F}} }
\newcommand{\cI}{{\ensuremath{\mathcal I}} }
\newcommand{\cL}{{\ensuremath{\mathcal L}} }
\newcommand{\cM}{{\ensuremath{\mathcal M}} }
\newcommand{\cR}{{\ensuremath{\mathcal R}} }
\newcommand{\cS}{{\ensuremath{\mathcal S}} }
\newcommand{\cT}{{\ensuremath{\mathcal T}} }
\newcommand{\ga}{\alpha}
\newcommand{\gb}{\beta}
\newcommand{\go}{\omega}
\renewcommand{\tilde}{\widetilde}          
\DeclareMathSymbol{\leqslant}{\mathalpha}{AMSa}{"36} 
\DeclareMathSymbol{\geqslant}{\mathalpha}{AMSa}{"3E} 
\DeclareMathSymbol{\eset}{\mathalpha}{AMSb}{"3F}     
\newcommand{\dd}{\text{\rm d}}             
\newcommand{\R}{\mathbb{R}}
\newcommand{\N}{\mathbb{N}}
\def\bs{\boldsymbol}
\newcommand{\PEfont}{\mathrm}
\newcommand{\p}{\ensuremath{\PEfont P}}
\newcommand{\e}{\ensuremath{\PEfont E}}
\newcommand{\E}{\e}
\renewcommand{\P}{\p}
\newcommand\bP{\ensuremath{\bs{\mathrm{P}}}}
\newcommand\bE{\ensuremath{\bs{\mathrm{E}}}}
\DeclareMathOperator{\bbvar}{\ensuremath{\mathbb{V}ar}}
\newcommand{\ind}{\mathds{1}}
\renewcommand{\epsilon}{\varepsilon}
\renewcommand{\theta}{\vartheta}
\renewcommand{\rho}{\varrho}
\newenvironment{myenumerate}{%
\renewcommand{\theenumi}{\arabic{enumi}}%
\renewcommand{\labelenumi}{{\rm(\theenumi)}}%
\begin{list}{\labelenumi}
	{%
	\setlength{\itemsep}{0.4em}%
	\setlength{\topsep}{0.5em}%
	\setlength\leftmargin{2.45em}%
	\setlength\labelwidth{2.05em}%
	\setlength{\labelsep}{0.4em}%
	\usecounter{enumi}%
	}%
	}%
{\end{list}
}
\newenvironment{ienumerate}{%
\renewcommand{\theenumi}{\roman{enumi}}%
\renewcommand{\labelenumi}{{\rm(\theenumi)}}%
\begin{list}{\labelenumi}
	{%
	\setlength{\itemsep}{0.4em}%
	\setlength{\topsep}{0.5em}%
	\setlength\leftmargin{2.45em}%
	\setlength\labelwidth{2.05em}%
	\setlength{\labelsep}{0.4em}%
	\usecounter{enumi}%
	}%
	}%
{\end{list}
}
\renewenvironment{enumerate}{
\begin{myenumerate}}%
{\end{myenumerate}}
\newenvironment{myitemize}{%
\begin{list}{$\bullet$}%
 	{%
	\setlength{\itemsep}{0.4em}%
	\setlength{\topsep}{0.5em}%
	\setlength\leftmargin{2.45em}%
	\setlength\labelwidth{2.05em}%
	\setlength{\labelsep}{0.4em}%
	}%
	}%
{\end{list}}
\renewenvironment{itemize}{
\begin{myitemize}}%
{\end{myitemize}}
\def\hbeta{{\hat{\beta}}}
\def\hh{{\hat{h}}}
\def\bR{\overline{\R}}
\def\td{\mathtt d}
\def\tg{\mathtt g}
\def\dd{\mathrm{d}}
\def\hbeta{{\hat{\beta}}}
\def\hh{{\hat{h}}}
\newcommand{\asto}[1]{\underset{{#1}\to\infty}{\longrightarrow}}
\newcommand\bpsi{\boldsymbol\psi}
\newcommand\btau{\boldsymbol\tau}
\newcommand\bZ{\boldsymbol Z}
\newcommand\bX{\boldsymbol X}
\newcommand\lntwo{\lfloor n/2 \rfloor}
\newcommand\untwo{\lceil n/2 \rceil}
\newcommand\lnfour{\lfloor n/4 \rfloor}
\newcommand\bK{\overline K}
\begin{document}
\baselineskip=14pt

\title{The continuum disordered pinning model}

\begin{abstract}
Any renewal processes on $\N_0$ with
a polynomial tail, with  exponent $\alpha \in (0,1)$, has a non-trivial scaling limit,
known as the \emph{$\alpha$-stable regenerative set}.
In this paper we consider Gibbs transformations of such renewal processes
in an i.i.d.\ random environment, called disordered pinning models. We show that
for $\alpha \in (\frac{1}{2}, 1)$
these models have a universal scaling limit,
which we call the {\em continuum disordered pinning model} (CDPM).
This is a random closed subset of $\R$ in a white noise random environment,
with subtle features:
\begin{itemize}
\item Any \emph{fixed} a.s.\ property of the $\alpha$-stable regenerative set
(e.g., its Hausdorff dimension)
is also an a.s.\ property of the CDPM,
for almost every realization of the environment.
\item Nonetheless, the law of the CDPM is
\emph{singular} with respect to the law of the $\alpha$-stable regenerative set,
for almost every realization of the environment.
\end{itemize}
The existence of a disordered continuum model,
such as the CDPM, is a manifestation of \emph{disorder relevance}
for pinning models with $\alpha \in (\frac{1}{2}, 1)$.
\end{abstract}

\author[F.Caravenna]{Francesco Caravenna}
\address{Dipartimento di Matematica e Applicazioni\\
 Universit\`a degli Studi di Milano-Bicocca\\
 via Cozzi 55, 20125 Milano, Italy}
\email{francesco.caravenna@unimib.it}

\author[R.Sun]{Rongfeng Sun}
\address{
Department of Mathematics\\
National University of Singapore\\
10 Lower Kent Ridge Road, 119076 Singapore
}
\email{matsr@nus.edu.sg}

\author[N.Zygouras]{Nikos Zygouras}
\address{Department of Statistics\\
University of Warwick\\
Coventry CV4 7AL, UK}
\email{N.Zygouras@warwick.ac.uk}
\date{\today}

\keywords{Scaling Limit, Disorder Relevance, Weak Disorder,
Pinning Model, Fell-Matheron Topology, Hausdorff Metric,
Random Polymer, Wiener Chaos Expansion}
\subjclass{Primary: 82B44; Secondary: 82D60, 60K35}

\maketitle

\section{Introduction}

We consider disordered pinning models,
which are defined via a Gibbs change of measure of a renewal process,
depending on an external i.i.d.\ random environment.
First introduced in the physics and biology literature,
these models have attracted much attention due to their rich structure,
which is amenable to a rigorous investigation; see, e.g.,
the monographs of Giacomin~\cite{G07,G10} and den Hollander~\cite{dH09}.

In this paper we define a \emph{continuum disordered pinning model} (CDPM),
inspired by recent work of Alberts, Khanin and Quastel~\cite{AKQ14b}
on the directed polymer in random environment.
The interest for such a continuum model is manifold:
\begin{itemize}
\item It is a \emph{universal object}, arising as the scaling limit of
discrete disordered pinning models in a suitable continuum and weak disorder limit,
cf.\ Theorem~\ref{th:main1}.

\item It provides a tool to capture the emerging effect of disorder
in pinning models, when disorder
is relevant, cf.\ Subsection~\ref{S:disrel} for a more detailed discussion.

\item It can be interpreted as an $\alpha$-stable regenerative set
in a white noise random environment, displaying subtle properties,
cf.\ Theorems~\ref{T:averageabs}, \ref{T:quenchsing} and~\ref{T:cconv3}.
\end{itemize}

Throughout the paper, we use the conventions
$\N := \{1,2,\ldots\}$, $\bbN_0:=\{0\}\cup\N$,
and write $a_n \sim b_n$ to mean $\lim_{n\to\infty} a_n/b_n = 1$.

\subsection{Renewal processes and regenerative sets}

Let $\tau:= (\tau_n)_{n\geq 0}$
be a \emph{renewal process} on $\N_0$,
that is $\tau_0 = 0$ and
the increments $(\tau_n-\tau_{n-1})_{n\in\N}$ are i.i.d. $\N$-valued random
variables (so that $0 = \tau_0 < \tau_1 < \tau_2 < \ldots$).
Probability and expectation for $\tau$ will be denoted respectively by $\p$ and $\E$.
We assume that $\tau$ is \emph{non-terminating}, i.e., $\p(\tau_1 < \infty) = 1$, and
\begin{equation} \label{eq:ass}
	K(n):=\p(\tau_1=n) = \frac{L(n)}{n^{1+\alpha}}
	,  \qquad \text{as } n\to\infty \,,
\end{equation}
where $\alpha\in (0,1)$ and $L(\cdot)$ is a slowly varying
function~\cite{BGT87}. We assume for simplicity
that $K(n)>0$ for every $n\in \bbN$
(periodicity complicates notation, but can be easily incorporated).

Let us denote by $\cC$ the space of all \emph{closed subsets of $\R$}.
There is a natural topology on $\cC$, called the Fell-Matheron
topology~\cite{F62,M75,M05}, which turns $\cC$ into a \emph{compact Polish space}, i.e.\
a compact separable topological space which admits a complete metric.
This can be taken as a version of the \emph{Hausdorff distance}
(see Appendix~\ref{A:RCS} for more details).

Identifying the renewal process $\tau=\{\tau_n\}_{n\geq 0}$ with its range,
we may view $\tau$ as a random subset of $\N_0$,
i.e.\ as a $\cC$-value random variable
(hence we write $\{n \in \tau\} := \bigcup_{k\ge 0} \{\tau_k = n\}$).
This viewpoint is very fruitful, because as $N\to\infty$
the rescaled set
\begin{equation} \label{eq:rescN}
	\frac{\tau }{N} = \bigg\{ \frac{\tau_n}{N} \bigg\}_{n\ge 0}
\end{equation}
converges in distribution on $\cC$
to a universal random closed set $\btau$ of $[0,\infty)$, called
the \emph{$\alpha$-stable regenerative set} (cf.\ \cite{FFM85},
\cite[Thm.~A.8]{G07}).
This coincides with
the closure of the range of the $\alpha$-stable subordinator
or, equivalently, with the zero level set of a Bessel process
of dimension $\delta = 2(1-\alpha)$
(see Appendix~\ref{A:RCS}),
and we denote its law by $\bP^\alpha$.

\begin{remark}\rm
Random sets have been studied extensively
\cite{M75,M05}. Here we focus on the special case of random closed subsets of $\R$.
The theory developed in \cite{FFM85}
for \emph{regenerative sets} cannot be applied
in our context, because we modify renewal processes
through inhomogeneous perturbations
and  conditioning (see \eqref{eq:Gweight}-\eqref{eq:condpm} below).
For this reason, in Appendix~\ref{A:RCS} we review and develop
a general framework to study convergence of random closed
sets of $\R$, based on a natural notion of finite-dimensional distributions.
\end{remark}

\subsection{Disordered pinning models}

Let $\omega:=(\omega_n)_{n\in\N}$ be i.i.d.\ random variables
(independent of the renewal process $\tau$), which represent the disorder.
Probability and expectation for $\omega$ will be denoted respectively by $\bbP$ and $\bbE$.
We assume that
\begin{equation}\label{eq:omegacond}
	\bbE[\omega_n]=0, \qquad
	\bbvar (\omega_n)=1, \qquad
	\exists t_0 > 0: \ \ \Lambda(t):=\log \bbE[e^{t \omega_n} ]<\infty
	\ \ \text{for $|t| \le t_0$} \,.
\end{equation}

The \emph{disordered pinning model}
is a random probability law $\P^{\omega}_{N,\beta,h}$ on subsets of $\{0, \ldots, N\}$,
indexed by realizations $\omega$ of the disorder,
the system size $N\in\N$, the disorder strength $\beta>0$
and bias $h\in\R$, defined by the following Gibbs change of measure
of the renewal process $\tau$:
\begin{equation}\label{eq:Gweight}
	\frac{\p^{\omega}_{N,\beta,h} (\tau\cap [0,N])}{\p (\tau\cap [0,N])}
	 :=\frac{1}{Z^{\omega}_{N,\beta,h}} e^{\sum_{n=1}^N (\beta \omega_n
	-\Lambda(\beta) + h) \ind_{\{n\in\tau\}}} \,,
\end{equation}
where the normalizing constant
\begin{equation}\label{eq:Zpin}
	Z_{N,\gb,h}^{\go}:=\E\Big[ e^{\sum_{n=1}^N (\beta \omega_n
	-\Lambda(\beta) + h) \ind_{\{n\in\tau\}}} \Big]
\end{equation}
is called the {\em partition function}.
In words, we perturb the law of the renewal process $\tau$ in the interval
$[0,N]$, by giving rewards/penalties $(\beta\omega_n - \Lambda(\beta) + h)$
to each visited site $n \in \tau$.
(The presence of the factor $\Lambda(\beta)$ in \eqref{eq:Gweight}-\eqref{eq:Zpin},
which just corresponds to
a translation of $h$, allows to have normalized weights
$\bbE[e^{\beta \omega_n - \Lambda(\beta)}] = 1$ for $h=0$.)

The properties of the model $\P^{\omega}_{N,\beta,h}$,
especially in the limit $N\to\infty$, have been studied
in depth in the recent mathematical literature (see e.g.~\cite{G07,G10, dH09} for an
overview).
In this paper we focus on the problem of defining a continuum analogue of $\P^{\omega}_{N,\beta,h}$.

Since under the ``free law'' $\p$
the rescaled renewal process $\tau / N$ converges in distribution to the
$\alpha$-stable regenerative set $\btau$,
it is natural to ask what happens under the ``interacting law'' $\P^{\omega}_{N,\beta,h}$.
Heuristically, in the scaling limit the i.i.d.\ random variables $(\omega_n)_{n\in\N}$
should be replaced by a one-dimensional white noise
$(\dd W_t)_{t \in [0,\infty)}$,
where $W = (W_t)_{t\in [0,\infty)}$ denotes a standard Brownian
motion (independent of $\btau$). Looking at \eqref{eq:Gweight},
a natural candidate for the scaling limit of $\tau /N$ under
$\P^{\omega}_{N,\beta,h}$ would be the random measure
$\bP^{\alpha;W}_{T,\beta, h}$ on $\cC$ defined by
\begin{equation}\label{eq:nosense}
	\frac{\dd\bP^{\alpha;W}_{T, \beta, h}}{\dd\bP^\alpha}(\btau \cap [0,T]) :=
	\frac{1}{\bZ^{\alpha; W}_{T, \beta, h}}
	e^{\int_0^T \ind_{\{t\in\btau \}} (\beta \dd W_t +  (h-\frac{1}{2}\beta^2) \dd t)} \,,
\end{equation}
where the continuum partition function
$\bZ^{\alpha; W}_{T, \beta, h}$ would be defined in analogy to \eqref{eq:Zpin}.
The problem is that a.e. realization of the $\alpha$-stable regenerative set $\btau$ has
zero Lebesgue measure, hence the integral in \eqref{eq:nosense} vanishes,
yielding the ``trivial'' definition $\bP^{\alpha;W}_{T, \beta, h} = \bP^\alpha$.

These difficulties turn out to be substantial and not just technical:
as we shall see,
a non-trivial scaling limit of $\P^{\omega}_{N,\beta,h}$ does exist,
but, for $\alpha \in (\frac{1}{2},1)$, it is \emph{not} absolutely
continuous with respect to the law $\bP^\alpha$
(hence no formula like \eqref{eq:nosense} can hold).
Note that an analogous phenomenon happens for the
directed polymer in random environment, cf.~\cite{AKQ14b}.

\subsection{Main results}
\label{S:main1}

We need to formulate an additional assumption
on the renewal processes that we consider. Introducing the renewal function
\begin{equation*}
	u(n) := \P(n\in\tau) = \sum_{k=0}^\infty \P(\tau_k = n) ,
\end{equation*}
assumption \eqref{eq:ass} yields $u(n+\ell) / u(n) \to 1$ as $n\to\infty$, provided $\ell = o(n)$
(see \eqref{un} below).
We ask that the rate of this convergence is at least a power-law of $\frac{\ell}{n}$:
\begin{equation}\label{eq:renass0+}
	\exists C, n_0 \in (0,\infty), \ \epsilon,\delta \in (0,1]: \qquad
	\bigg| \frac{u(n+\ell)}{u(n)} - 1 \bigg| \le C \bigg(\frac{\ell}{n}\bigg)^\delta
	\quad \ \forall n \ge n_0, \ 0 \le \ell \le \epsilon n \,.
\end{equation}

\begin{remark}\label{R:Ken}\rm
As we discuss in Appendix~\ref{sec:dimo}, condition \eqref{eq:renass0+}
is a \emph{very mild} smoothness requirement,
that can be verified in most situations. E.g.,
it was shown by Alexander~\cite{A11} that for any $\alpha>0$ and
slowly varying $L(\cdot)$, there exists a Markov chain $X$ on $\N_0$ with $\pm1$ steps,
called Bessel-like random walk,
whose return time to $0$, denoted by $T$, is such that
\begin{equation}\label{eq:assK}
	K(n) := \P(T=2n) = \frac{\tilde L(n)}{n^{1+\alpha}} \qquad \mbox{as } n\to\infty,
	\qquad \text{with $\widetilde L(n) \sim L(n)$}.
\end{equation}
We prove in Appendix~\ref{sec:dimo} that
any such walk always satisfies \eqref{eq:renass0+}.
\end{remark}

Recall that $\cC$ denotes the compact Polish space of closed subsets of $\R$.
We denote by $\cM_1(\cC)$ the space of Borel probability
measures on $\cC$, which is itself a compact Polish space,
equipped with the topology of weak convergence.
We will work with a \emph{conditioned} version
of the disordered pinning model \eqref{eq:Gweight}, defined by
\begin{equation} \label{eq:condpm}
	\p^{\omega, \rm c}_{N,\beta,h}(\,\cdot\,) :=
	\p^{\omega}_{N,\beta,h}(\,\cdot\,| N \in \tau) \,.
\end{equation}
(In order to lighten notation,
when $N \not\in \N$ we agree that $\p^{\omega, \rm c}_{N,\beta,h} :=
\p^{\omega, \rm c}_{\lfloor N \rfloor,\beta,h}$.)

Recalling \eqref{eq:rescN}, let us introduce the notation
\begin{equation}\label{eq:dtau/N}
	\p^{\omega,\rm c}_{NT , \beta_N ,h_N}(\dd (\tau/N)) :=
	\text{ law of the rescaled set }
	\frac{\tau }{N} \cap [0,T]
	\text{ under }
	\p^{\omega,\rm c}_{NT , \beta_N ,h_N} \,.
\end{equation}
For a fixed realization of the disorder $\omega$,
$\p^{\omega,\rm c}_{NT , \beta_N ,h_N}(\dd (\tau/N))$ is
a probability law on $\cC$, i.e.\
an element of $\cM_1(\cC)$.
Since $\omega$ is chosen randomly,
the law $\p^{\omega,\rm c}_{NT , \beta_N ,h_N}(\dd (\tau/N))$
is a \emph{random} element of $\cM_1(\cC)$, i.e.\ a
$\cM_1(\cC)$-valued random variable.

Our first main result
is the convergence in distribution of this random variable,
provided $\alpha \in (\frac{1}{2},1)$ and the coupling constants $\beta = \beta_N$
and $h = h_N$ are rescaled appropriately:
\begin{equation}\label{eq:scalingbetah}
    \beta_N :=
	\hbeta\frac{L(N)}{ N^{\alpha - \frac{1}{2}}} \,,
	\qquad
	h_N := \hh\frac{L(N)}{ N^{\alpha} } \,,
    \qquad \text{for} \ N\in\N ,
    \ \hbeta > 0, \ \hh \in \R \,.
\end{equation}

\begin{theorem}[Existence and universality of the CDPM]\label{th:main1}
Fix $\alpha \in (\frac{1}{2},1)$, $T>0$, $\hbeta > 0$, $\hh\in\R$.
There exists a $\cM_1(\cC)$-valued random variable $\bP^{\alpha;W,\rm c}_{T, \hbeta, \hh}$,
called the (conditioned) \emph{continuum disordered pinning model (CDPM)},
which is a function of the parameters $(\alpha, T, \hbeta, \hh)$ and of
a standard Brownian motion $W = (W_t)_{t\ge 0}$,
with the following property:
\begin{itemize}
\item
for any renewal process $\tau$ satisfying \eqref{eq:ass} and \eqref{eq:renass0+},
and $\beta_N$, $h_N$ defined as in \eqref{eq:scalingbetah};
\item
for any i.i.d.\ sequence $\omega$ satisfying \eqref{eq:omegacond};
\end{itemize}
the law
$\p^{\omega,\rm c}_{NT , \beta_N ,h_N}(\dd (\tau/N))$ of the rescaled pinning model,
cf. \eqref{eq:dtau/N},
viewed as a $\cM_1(\cC)$-valued random variable,
converges in distribution to $\bP^{\alpha;W,\rm c}_{T, \hbeta, \hh}$ as $N\to\infty$.
\end{theorem}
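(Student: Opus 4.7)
The plan is to characterize the CDPM through its \emph{hitting finite-dimensional distributions} and to prove convergence via a polynomial-to-Wiener chaos expansion of the partition function, in the spirit of \cite{AKQ14b}. Any random element $\mu$ of $\cM_1(\cC)$ is determined by the family of numerical random variables $\mu(\btau\cap I_j \neq \emptyset \ \forall j)$ indexed by finite families of disjoint closed intervals $I_j \subset [0,T]$ with rational endpoints, and these functionals form a measure-determining family of continuous maps on $\cM_1(\cC)$ (this is exactly the framework developed in Appendix~\ref{A:RCS}). Since $\cC$ is compact in the Fell-Matheron topology, $\cM_1(\cC)$ is compact too, so the laws of the $\cM_1(\cC)$-valued random variables $\p^{\omega,c}_{NT,\beta_N,h_N}(\dd(\tau/N))$ are automatically tight, and the task reduces to identifying the joint limit in distribution of the hitting random variables attached to arbitrary finite families of disjoint intervals.

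The key analytic input is the construction of \emph{pinned continuum partition functions} $\bZ^{\alpha;W,c}_{s,t,\hbeta,\hh}$ for $0\le s<t\le T$, defined as the $L^2(\bbP)$-limits of suitably normalized discrete partition functions conditioned to visit $Ns$ and $Nt$. Writing such a discrete partition function as a polynomial chaos in the tilted disorder variables $\xi_n := e^{\beta_N\omega_n-\Lambda(\beta_N)+h_N}-1$ yields, for its $k$-th order component, a sum proportional to
\[ \sum_{Ns = n_0 < n_1 < \cdots < n_k < n_{k+1} = Nt}\Bigl(\prod_{i=0}^{k} u(n_{i+1}-n_i)\Bigr)\prod_{i=1}^{k}\xi_{n_i}, \]
together with mean-shift contributions driven by $\bbE\xi_n\sim h_N$. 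Using the scaling \eqref{eq:scalingbetah}, the renewal asymptotic $u(n)\sim \frac{\sin(\pi\alpha)}{\pi L(n)}n^{\alpha-1}$, and $\bbvar(\xi_n)\sim\beta_N^2$, this converges after rescaling $n_i=Nt_i$ to a $k$-fold Wiener integral against $dW$ with kernel proportional to $\prod_{i=0}^k (t_{i+1}-t_i)^{\alpha-1}$. Summing over $k$ produces $\bZ^{\alpha;W,c}_{s,t,\hbeta,\hh}$, and joint $L^2(\bbP)$-convergence for finitely many disjoint pinning intervals $(s_j,t_j)$ is immediate since the underlying discrete chaoses are supported on disjoint index sets.

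With these pinned partition functions in hand, for disjoint intervals $[a_j,b_j]$ one rewrites the hitting probability, by splitting the renewal configuration at its first and last point inside each interval, as a finite sum of products of pinned discrete partition functions divided by $Z^{\omega,c}_{NT,\beta_N,h_N}$, with the summation running over the candidate splitting points $s_j,t_j\in[Na_j,Nb_j]\cap\N$. Combining joint $L^2(\bbP)$-convergence of the partition functions with Riemann-sum convergence of the sums over splitting points (which become integrals against the $\alpha$-stable renewal density $C_\alpha(v-u)^{\alpha-1}$), and using that the denominator $\bZ^{\alpha;W,c}_{0,T,\hbeta,\hh}$ is $\bbP$-a.s.\ positive, gives the joint limit in distribution of the hitting random variables as explicit measurable functionals of $W$. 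One then \emph{defines} $\bP^{\alpha;W,c}_{T,\hbeta,\hh}$ as the random probability measure on $\cC$ whose hitting functionals coincide with these limits, producing an honest $\cM_1(\cC)$-valued random variable via the Choquet-type reconstruction of Appendix~\ref{A:RCS}. The main obstacle is the chaos argument itself: the restriction $\alpha\in(\tfrac12,1)$ enters precisely because the $L^2$-norm of the $k$-th limiting Wiener chaos, controlled by $\int\cdots\int\prod_i(t_{i+1}-t_i)^{2(\alpha-1)}\,dt$, is finite and summable over $k$ only when $2(\alpha-1)>-1$; and the quantitative smoothness assumption \eqref{eq:renass0+} is exactly what is needed to upgrade the pointwise asymptotic for $u(n)$ into a uniform approximation sharp enough to control the error terms in the Riemann-sum comparison uniformly across all chaos orders.
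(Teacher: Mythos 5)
Your overall route is close in spirit to the paper's: both characterize random closed sets by finitely many hitting/boundary functionals (the paper uses $(\tg_t,\td_t)$ restricted f.d.d., you use avoidance probabilities of disjoint intervals — these are equivalent in the Choquet framework of Appendix~\ref{A:RCS}), both decompose those functionals by splitting the renewal configuration at the endpoints closest to prescribed times, and both exploit compactness of $\cM_1(\cC)$ to bypass abstract consistency checks. The essential gap is that you rely on \emph{pointwise-in-$(s,t)$} $L^2(\bbP)$-convergence of pinned partition functions, whereas the argument actually requires convergence of the whole two-parameter \emph{process} $Z^{\omega,\rm c}_{\beta_N,h_N}(sN,tN) \to \bZ^{\alpha;W,\rm c}_{\hbeta,\hh}(s,t)$ in $C([0,T]^2_\le,\R)$. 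This stronger convergence is precisely what Section~\ref{S:convZ} of the paper establishes (it is the bulk of the work, and it is where \eqref{eq:renass0+} is actually used — to prove tightness of the discrete process, not merely to sharpen a pointwise Riemann estimate). Without it, two of your steps are not justified: (i) the split of a hitting probability into a sum over splitting points yields, after scaling, a Riemann sum of partition functions evaluated at \emph{moving} arguments $s_j,t_j$, so you need convergence uniform in those arguments (with continuity of the limit), not just $L^2$-convergence at each fixed pair; and (ii) your assertion that the denominator $\bZ^{\alpha;W,\rm c}_{0,T,\hbeta,\hh}$ is $\bbP$-a.s.\ strictly positive is genuinely nontrivial — the paper proves it (Theorem~\ref{T:bZprop}~\eqref{it:1}) via the renewal factorization \eqref{bUdecomp} combined with path continuity of the continuum process, and neither property is available from $L^2$-limits alone.

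A second, smaller, issue: the hitting functionals $\mu \mapsto \mu(\btau \cap I \neq \emptyset)$ attached to a \emph{fixed} interval $I$ with rational endpoints are not continuous maps on $\cM_1(\cC)$, because $\ind_{\{C\cap I\neq\emptyset\}}$ is not continuous on $\cC$. The paper handles this by quantifying over $t$ in a set of full Lebesgue measure and invoking $\cI_\tg(X)\cap\cI_\td(X)$; you would instead need to observe a posteriori that the limit law gives zero mass to atoms of $\tg_t$ and $\td_t$ at rational $t$ (true here because the limiting restricted f.d.d.\ have Lebesgue densities), and make this verification explicit rather than asserting continuity of the functionals outright.
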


We refer to Subsection~\ref{S:disrel} for a discussion on the universality
of the CDPM. We stress that the restriction $\alpha \in (\frac{1}{2},1)$
is substantial and not technical, being linked with the issue of \emph{disorder relevance},
as we explain in Subsection~\ref{S:disrel}
(see also \cite{CSZ13}).
\vskip 2mm
Let us give a quick explanation of the choice of scalings \eqref{eq:scalingbetah}. This is the canonical scaling under which the partition function $Z^\omega_{N,\gb_N,h_N}$
in \eqref{eq:Zpin} has a nontrivial continuum limit. To see this, write
\begin{align*}
Z^\go_{N,\gb,h}=\E\Big[ \prod_{n=1}^N(1+\epsilon^{\gb,h}_n\,\ind_{n\in\tau} )\Big]
=1+\sum_{k=1}^N\sum_{1\leq n_1<\cdots<n_k\leq N} \epsilon^{\gb,h}_{n_1}\cdots \epsilon^{\gb,h}_{n_k} \,\P(n_1\in\tau,...,n_k\in \tau),
\end{align*}
where $\epsilon^{\gb,h}_{n}:=e^{\gb\go_n-\Lambda(\gb)+h}-1$. By
Taylor expansion, as $\gb,h$ tend to zero, one has the asymptotic
behavior $\bbE[\epsilon^{\gb,h}_{n}] \approx h$ and $\bbvar(\epsilon^{\gb,h}_{n})
\approx \gb^2$.
Using this fact,
we see that the asymptotic mean and variance behavior of the first term
($k=1$) in the above series is
\begin{align*}
\bbE\Big[\sum_{n=1}^N \epsilon^{\gb,h}_{n} \P(n\in \tau) \,\Big]
& \approx h \sum_{n=1}^N  \P(n\in \tau) \approx h\frac{N^\ga}{L(N)} \,, \\
\bbvar\Big[\sum_{n=1}^N \epsilon^{\gb,h}_{n} \P(n\in \tau) \,\Big]
& \approx \gb^2 \sum_{n=1}^N  \P(n\in \tau)^2 \approx \gb^2\frac{N^{2\ga-1}}{L(N)^2} \,,
\end{align*}
because $\P(n\in \tau) \approx n^{\alpha-1} / L(n)$, by \eqref{eq:ass}
(see \eqref{un} below).
Therefore, for these quantities to have a non-trivial limit as $N$ tends to infinity, we are forced to scale $\beta_N$ and $h_N$ as in \eqref{eq:scalingbetah}. Remarkably, this is also the correct scaling for higher order terms in the expansion for $Z^\go_{N,\gb,h}$,
as well as for the measure $\P^\go_{N,\gb_N,h_N}$ to converge to a non-trivial limit.

\smallskip

We now describe the continuum measure.
For a fixed realization of the Brownian motion $W = (W_t)_{t\in [0,\infty)}$,
which represents the ``continuum disorder'',
we call $\bP^{\alpha; W,\rm c}_{T, \hbeta, \hh}$
the \emph{quenched law} of the CDPM, while
\begin{equation}\label{eq:averaged}
	\bbE \Big[ \bP^{\alpha; W,\rm c}_{T, \hbeta, \hh} \Big] (\,\cdot\,)
	:= \int \bP^{\alpha; W,\rm c}_{T, \hbeta, \hh} (\,\cdot\,) \, \bbP(\dd W)
\end{equation}
will be called the \emph{averaged law} of the CDPM.
We also introduce, for $T > 0$,
the law $\bP_{T}^{\alpha; \rm c}$ of the $\alpha$-stable regenerative set $\btau$ restricted on $[0,T]$
and conditioned to visit $T$:
\begin{equation}\label{eq:bPnT}
	\bP_{T}^{\alpha; \rm c}(\,\cdot\,) := \bP^\alpha( \btau \cap [0,T] \in \,\cdot\,|\, T \in \btau) \,,
\end{equation}
which will be called the
\emph{reference law}.
(Relation \eqref{eq:bPnT} is defined through regular conditional distributions.)
Note that both $\bbE\big[\bP^{\alpha; W,\rm c}_{T, \hbeta, \hh}\big]$ and $\bP_{T}^{\alpha; \rm c}$
are probability laws on $\cC$, while
$\bP^{\alpha; W,\rm c}_{T, \hbeta, \hh}$ is a \emph{random}
probability law on $\cC$.

\smallskip

Intuitively, the quenched law $\bP^{\alpha; W,\rm c}_{T, \hbeta, \hh}$
could be conceived as a ``Gibbs transformation'' of the reference law $\bP_{T}^{\alpha; \rm c}$,
where each visited site $t \in \btau \cap [0,T]$
of the $\alpha$-stable regenerative set is given
a reward/penalty $\hbeta \frac{\dd W_t}{\dd t} + \hh$, like in the discrete case.
This heuristic interpretation should be taken with care, however, as the following results show.

\begin{theorem}[Absolute Continuity of the Averaged CDPM]\label{T:averageabs}
For all $\alpha \in (\frac{1}{2},1)$, $T>0$, $\hbeta > 0$, $\hh\in\R$,
the averaged law
$\bbE\big[\bP^{\alpha; W, \rm c}_{T, \hat \beta, \hat h}\big]$ of the CDPM
is absolutely continuous with respect to the
reference law $\bP^{\alpha; \rm c}_{T}$.
It follows that any typical property of the reference law $\bP^{\alpha; \rm c}_{T}$
is also a typical property
of the quenched law $\bP^{\alpha; W, \rm c}_{T, \hat \beta, \hat h}$, for a.e.\
realization of $W$:
\begin{equation} \label{eq:abscont}
	\forall A \subseteq \cC \text{ such that } \bP^{\alpha; \rm c}_{T}(A) = 1: \qquad
	\bP^{\alpha; W, \rm c}_{T, \hat \beta, \hat h}(A) = 1 \text{ for $\bbP$-a.e. $W$} \,.
\end{equation}
In particular, for a.e. realization of $W$, the quenched law
$\bP^{\alpha; W, \rm c}_{T, \hat \beta, \hat h}$ of the CDPM is supported on closed
subsets of $[0,T]$ with Hausdorff dimension $\alpha$.
\end{theorem}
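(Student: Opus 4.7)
The plan is to first establish the absolute continuity of the averaged CDPM with respect to $\bP^{\alpha;\rm c}_T$; the other two assertions then follow formally. The key observation is that each disorder-averaged discrete measure
\begin{equation*}
\bar\mu_N(\,\cdot\,) \;:=\; \bbE\bigl[\p^{\omega,\rm c}_{NT,\beta_N,h_N}(\tau/N \in \cdot)\bigr]
\end{equation*}
is, by construction, absolutely continuous with respect to the conditioned free renewal law $\nu_N := \p^{\rm c}_{NT}(\tau/N \in \cdot)$. Theorem~\ref{th:main1}, paired with $F\in C_b(\cC)$, yields $\bar\mu_N\Rightarrow\bbE[\bP^{\alpha;W,\rm c}_{T,\hbeta,\hh}]$ weakly on $\cC$, and classical renewal convergence gives $\nu_N\Rightarrow\bP^{\alpha;\rm c}_T$; the goal is to transfer the $\nu_N$-absolute continuity to the limit.

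The core estimate I aim to establish, uniformly in $N$, for every Borel $A\subseteq\cC$, every $\delta>0$, and every $p>1$, is
\begin{equation*}
\bar\mu_N(A) \;\le\; \bbP\bigl(Z^{\omega,\rm c}_{NT,\beta_N,h_N}<\delta\bigr) \;+\; \delta^{-1} C_{p,\hh,T}\, \nu_N(A)^{1/p}.
\end{equation*}
To prove this I write $\bar\mu_N(A)=\bbE[X_A/Y]$ with $Y:=Z^{\omega,\rm c}_{NT,\beta_N,h_N}$ and $X_A:=\E^{\rm c}_{NT}[\ind_A(\tau/N)\,e^{H^\omega}]$, and split according to $\{Y<\delta\}$ versus $\{Y\ge\delta\}$. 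On $\{Y<\delta\}$ the trivial bound $X_A\le Y$ contributes at most $\bbP(Y<\delta)$; on $\{Y\ge\delta\}$, exchanging $\bbE$ and $\E^{\rm c}_{NT}$ and using $\bbE[e^{\beta_N\omega_n-\Lambda(\beta_N)}]=1$ reduces the disorder average of $X_A$ to $\E^{\rm c}_{NT}\bigl[\ind_A(\tau/N)\,e^{h_N|\tau\cap[1,NT]|}\bigr]$, after which Hölder with exponent $p$ yields the bound, provided the exponential factor has $L^q$-norm uniformly bounded in $N$ under $\p^{\rm c}_{NT}$. This last property is a consequence of the scaling~\eqref{eq:scalingbetah} together with the renewal asymptotics $\p(n\in\tau)\sim c\,n^{\alpha-1}/L(n)$ implied by~\eqref{eq:ass}: the rescaled local time $|\tau\cap[1,NT]|\cdot L(N)/N^\alpha$ is tight with bounded moments, which makes $h_N|\tau\cap[1,NT]|$ an $O(1)$ quantity with exponential moments.

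The second ingredient is the positivity in the limit of the partition function: the Wiener chaos picture underlying the construction of the CDPM, operative precisely when $\alpha>\tfrac12$ so that the iterated stochastic integrals arising from $\sum_n \p(n\in\tau)^2$ converge under \eqref{eq:scalingbetah}, implies that $Z^{\omega,\rm c}_{NT,\beta_N,h_N}$ converges in distribution to a strictly positive random variable $\bZ^{\alpha;W,\rm c}_{T,\hbeta,\hh}$; hence $\limsup_N \bbP(Z^{\omega,\rm c}_{NT,\beta_N,h_N}<\delta) \le \bbP(\bZ^{\alpha;W,\rm c}_{T,\hbeta,\hh}<\delta)\to 0$ as $\delta\downarrow 0$. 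Applying the core estimate to a closed $F\subseteq\cC$ and letting $N\to\infty$ via the Portmanteau theorem gives $\bbE[\bP^{\alpha;W,\rm c}_{T,\hbeta,\hh}(F)]\le\bbP(\bZ^{\alpha;W,\rm c}_{T,\hbeta,\hh}<\delta)+\delta^{-1}C\,\bP^{\alpha;\rm c}_T(F)^{1/p}$; if $\bP^{\alpha;\rm c}_T(F)=0$ the right-hand side vanishes as $\delta\downarrow 0$, and outer regularity of $\bP^{\alpha;\rm c}_T$ on the compact Polish space $\cC$ extends the nullity to every Borel set, giving the desired absolute continuity.

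The remaining two claims follow at once. If $\bP^{\alpha;\rm c}_T(A)=1$, then $\bbE[\bP^{\alpha;W,\rm c}_{T,\hbeta,\hh}(A^c)]=0$ by absolute continuity, so $\bP^{\alpha;W,\rm c}_{T,\hbeta,\hh}(A^c)=0$ for $\bbP$-a.e.\ $W$; this is \eqref{eq:abscont}. Applied to $A=\{C\in\cC : \dim_H(C\cap[0,T])=\alpha\}$, which has full $\bP^{\alpha;\rm c}_T$-measure by the classical dimension theorem for $\alpha$-stable subordinators, this yields the Hausdorff dimension statement. The principal technical obstacle is the almost sure positivity and $L^2$-control of $\bZ^{\alpha;W,\rm c}_{T,\hbeta,\hh}$: this is exactly where the condition $\alpha\in(\tfrac12,1)$ enters in a nonremovable way, since for $\alpha\le\tfrac12$ the Wiener chaos expansion diverges and no non-degenerate continuum limit exists.
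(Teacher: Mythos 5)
Your route is genuinely different from the paper's. The paper first establishes the exact identity $\bbE\bigl[\bZ^{\alpha;W,\rm c}_{\hbeta,0}(0,T)\,\bP^{\alpha;W,\rm c}_{T,\hbeta,0}(\cdot)\bigr]=\bP^{\alpha;\rm c}_T(\cdot)$ by matching restricted finite-dimensional densities (using $\bbE[\bZ_{\hbeta,0}(y_i,x_{i+1})]=1$), so that absolute continuity is immediate from the a.s. positivity of $\bZ$, and $\hh\ne 0$ is handled by a Girsanov tilt. You instead work entirely at the discrete level with a uniform-in-$N$ comparison of the disorder-averaged polymer law against the free conditioned renewal law and then pass to the continuum limit. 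This is a legitimate alternative strategy — it avoids the explicit f.d.d.\ characterization of the CDPM — but as written it has two gaps.

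First, the Portmanteau direction is wrong. You apply $\bar\mu_N(F)\le\bbP(Y_N<\delta)+\delta^{-1}C\,\nu_N(F)^{1/p}$ to a closed $F$ and let $N\to\infty$, claiming $\bbE[\bP^W](F)\le\bbP(\bZ\le\delta)+\delta^{-1}C\,\bP^{\alpha;\rm c}_T(F)^{1/p}$. But for closed $F$, Portmanteau gives $\limsup_N\bar\mu_N(F)\le\bbE[\bP^W](F)$ — an upper bound on the $\limsup$ by the thing you want to bound, not the inequality $\bbE[\bP^W](F)\le\liminf_N\bar\mu_N(F)$ that you actually need to transfer the left-hand side. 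Conversely, transferring $\nu_N(F)^{1/p}$ requires $\limsup_N\nu_N(F)\le\bP^{\alpha;\rm c}_T(F)$, hence $F$ closed. You cannot get both directions simultaneously for a general closed set, and the final appeal to "outer regularity" does not rescue this. The fix is to apply the estimate to open $\epsilon$-neighbourhoods $F^\epsilon$ that are $\bP^{\alpha;\rm c}_T$-continuity sets (which holds for Lebesgue-a.e.\ $\epsilon>0$): then $\bbE[\bP^W](F)\le\liminf_N\bar\mu_N(F^\epsilon)$ by the open-set half of Portmanteau and $\nu_N(F^\epsilon)\to\bP^{\alpha;\rm c}_T(F^\epsilon)$, after which let $\epsilon\downarrow0$, then $\delta\downarrow0$.

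Second, the assertion $\sup_N\E^{\rm c}_{NT}\bigl[e^{q h_N|\tau\cap[1,NT]|}\bigr]<\infty$ is a genuine lemma that you only gesture at via "tight with bounded moments." Tightness and bounded (polynomial) moments of the rescaled local time do not imply uniformly bounded exponential moments. For $\hh\le 0$ the factor is $\le 1$ and there is nothing to prove, but for $\hh>0$ you need a quantitative estimate, e.g.\ via Mittag–Leffler moment asymptotics $\E^{\rm c}_{NT}\bigl[|\tau\cap[1,NT]|^k\bigr]=O\bigl(k!/\Gamma(1+k\alpha)\bigr)\,(N^\alpha/L(N))^k$ uniformly in $k$ and $N$, whose sum converges to an entire function. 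This step is believable but must actually be proved; it is roughly the discrete analogue of the fact, used in the paper via a different mechanism, that $\bZ\in L^2$.
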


It is tempting to deduce from \eqref{eq:abscont} the absolute continuity of the
quenched law $\bP^{\alpha; W, \rm c}_{T, \hat \beta, \hat h}$ with
respect to the reference law $\bP^{\alpha; \rm c}_{T}$, for a.e.\
realization of $W$, \emph{but this is false}.

\begin{theorem}[Singularity of the Quenched CDPM]\label{T:quenchsing}
For all $\alpha \in (\frac{1}{2},1)$, $T>0$, $\hbeta > 0$, $\hh\in\R$
and for a.e.\ realization of $W$, the
quenched law $\bP^{\alpha; W, \rm c}_{T, \hat \beta, \hat h}$ of the CDPM
is singular with respect to the reference law $\bP^{\alpha; \rm c}_{T}$:
\begin{equation} \label{eq:sing}
        \text{ for $\bbP$-a.e. $W$, } \
	\exists A \subseteq \cC \ \text{ such that } \ \bP^{\alpha; \rm c}_{T}(A) = 1
 	\ \text{ and } \
	\bP^{\alpha; W, \rm c}_{T, \hat \beta, \hat h}(A) = 0 \,.
\end{equation}
\end{theorem}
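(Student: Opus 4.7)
My plan is to prove singularity by combining a Hellinger-affinity computation on the discrete approximations with the scaling limit from Theorem \ref{th:main1}. The overall strategy is by contradiction: if the quenched CDPM had a nontrivial absolutely continuous part with respect to $\bP^{\alpha;\rc}_T$ on a positive $\bbP$-measure set of $W$, then the Hellinger affinity between the two measures would be positive there, but this will be ruled out by a quantitative estimate at the discrete level.

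By Theorem \ref{th:main1}, the quenched CDPM arises as the joint-in-$\omega$ weak limit in $\cM_1(\cC)$ of the rescaled discrete conditioned pinning measures, whose density with respect to the discrete reference $\p^{\rc}_{NT}$ is
\begin{equation*}
g^{\omega,N}(\tau) := \frac{1}{Z^{\omega,\rc}_{NT,\beta_N,h_N}} \prod_{n=1}^{NT}\bigl(1+\epsilon^{\beta_N,h_N}_n\,\ind_{\{n\in\tau\}}\bigr),\qquad \epsilon^{\beta,h}_n:=e^{\beta\omega_n-\Lambda(\beta)+h}-1.
\end{equation*}
Let $\rho^\omega_N:=\bE^{\rc}_{NT}\bigl[\sqrt{g^{\omega,N}(\tau)}\bigr]\in[0,1]$ be the discrete Hellinger affinity between the pinning and the reference law. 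Using the i.i.d.\ structure of $\omega$ and Fubini, for fixed $\tau$ one has
\begin{equation*}
\bbE\biggl[\prod_{n=1}^{NT}\bigl(1+\epsilon^{\beta_N,h_N}_n\,\ind_{\{n\in\tau\}}\bigr)^{1/2}\biggr]=\exp\bigl(c_{\beta_N,h_N}\,|\tau\cap[1,NT]|\bigr),
\end{equation*}
where $c_{\beta,h}:=\tfrac12(h-\Lambda(\beta))+\Lambda(\beta/2)=\tfrac{h}{2}-\tfrac{\beta^2}{8}+O(\beta^3)$ as $\beta\to 0$. Under the scaling \eqref{eq:scalingbetah}, $c_{\beta_N,h_N}$ is negative and of order $-\hat\beta^2 L(N)^2/(8N^{2\alpha-1})$, while typically $|\tau\cap[1,NT]|\asymp N^\alpha/L(N)$ under $\p^{\rc}_{NT}$ by renewal asymptotics. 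Hence $c_{\beta_N,h_N}\cdot|\tau\cap[1,NT]|\to-\infty$ since $\alpha<1$, and dominated convergence yields $\bE^{\rc}_{NT}\bigl[\exp(c_{\beta_N,h_N}|\tau\cap[1,NT]|)\bigr]\to 0$. A cut-off argument on the denominator $Z^{\omega,\rc}_{NT,\beta_N,h_N}$, exploiting its convergence in distribution to the $\bbP$-a.s.\ positive continuum partition function $\bZ^{\alpha;W,\rc}_{T,\hbeta,\hh}$, then gives $\bbE[\rho^\omega_N]\to 0$, so $\rho^\omega_N\to 0$ in $\bbP$-probability.

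Finally, $\bbE[\rho^\omega_N]$ is exactly the Hellinger affinity between the joint laws $\bbP(\dd\omega)\otimes\p^{\omega,\rc}_{NT,\beta_N,h_N}(\dd(\tau/N))$ and $\bbP(\dd\omega)\otimes\p^{\rc}_{NT}(\dd(\tau/N))$ on the product space, both of which converge weakly (via the coupling from Theorem \ref{th:main1}) to $\bbP(\dd W)\otimes\bP^{\alpha;W,\rc}_{T,\hbeta,\hh}(\dd\btau)$ and $\bbP(\dd W)\otimes\bP^{\alpha;\rc}_T(\dd\btau)$, respectively. Mutual singularity of the joint limit measures yields \eqref{eq:sing} upon disintegration in $W$. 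The main obstacle is precisely this last step: Hellinger affinity is not in general continuous (nor even lower semi-continuous) under weak convergence, so discrete vanishing does not transfer automatically to the continuum. Overcoming it requires exploiting the specific structure of the convergence, realizing it on a Skorokhod-coupled probability space and invoking the Wiener chaos representation of $\bZ^{\alpha;W,\rc}_{T,\hbeta,\hh}$, in order to construct an explicit $W$-measurable event in $\cC$ which is $\bP^{\alpha;\rc}_T$-null yet carries full mass under the quenched CDPM for $\bbP$-a.e.\ $W$.
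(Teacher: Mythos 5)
Your strategy has a genuine gap which you yourself flag in the final sentence: Hellinger affinity is neither continuous nor semicontinuous under weak convergence, so showing $\bbE[\rho^\omega_N]\to 0$ at the discrete level tells you \emph{nothing} about the continuum limits. (In general a sequence of pairs of mutually absolutely continuous measures can converge weakly to a pair of mutually singular ones and vice versa.) Everything up to that point is a sketch of a plausible discrete computation — and the key sign observation, $c_{\beta_N,h_N}\approx \tfrac{h_N}{2}-\tfrac{\beta_N^2}{8}<0$ under the scaling \eqref{eq:scalingbetah}, is indeed the right heuristic — but the program you outline for closing the gap (``construct an explicit $W$-measurable event in $\cC$ which is $\bP^{\alpha;\rc}_T$-null yet carries full CDPM mass'') \emph{is} the theorem; it is not a step you perform. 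Your proposal therefore stops exactly where the real work begins. There is also a secondary issue in the discrete estimate itself: the quantity you need is $\bbE\bigl[Z^{\omega,\rc}_{NT,\beta_N,h_N}{}^{-1/2}\prod_n(1+\epsilon^{\beta_N,h_N}_n\ind_{\{n\in\tau\}})^{1/2}\bigr]$, and the ``cut-off argument'' to decouple the $Z$-denominator from the numerator is nontrivial (naive Cauchy--Schwarz destroys the exponential gain, because $\bbE[\prod(1+\epsilon_n\ind_{n\in\tau})]$ is not small).

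The paper's proof works entirely at the continuum level and avoids the transfer problem altogether. It uses the dyadic filtration $\cF_n=\sigma(\tg_{iT/2^n},\td_{iT/2^n}:1\le i\le 2^n-1)$ on $\cC_{0,T}$, which generates $\cB(\cC_{0,T})$ by Lemma~\ref{th:basic}. Comparing the restricted f.d.d.\ formulas \eqref{fddalpha} and \eqref{CDPMfdd}, the Radon--Nikodym derivative of $\bP^{\alpha;W,\rc}_{T,\hbeta,\hh}$ with respect to $\bP^{\alpha;\rc}_T$ on $\cF_n$ is the explicit product \eqref{fWnbtau} of continuum partition functions over dyadic blocks visited by $\btau$. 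Singularity is then equivalent to this nonnegative $\cF_n$-martingale converging to $0$, and this is proved by a fractional moment bound: using translation invariance, the scaling property (Theorem~\ref{T:bZprop}\eqref{it:2}--\eqref{it:3}), and the Wiener-chaos estimate
\begin{equation*}
\bbE\bigl[\bigl(\bZ^{\alpha;W,\rc}_{\hbeta,0}(0,1)\bigr)^\gamma\bigr]\le 1-c\hbeta^2\le e^{-c\hbeta^2}\qquad(\gamma\in(0,\tfrac12),\ \hbeta\text{ small}),
\end{equation*}
one gets $\log\bbE\bigl[(f^W_n(\btau)\bZ(0,T))^\gamma\bigr]\le -c\hbeta^2\sum_j(b_{n,j}-a_{n,j})^{2\alpha-1}$, and the sum diverges $\bP^{\alpha;\rc}_T$-a.s.\ because the dyadic blocks form a cover of $\btau$ by balls of diameter $\le 2^{-n}$ while $\dim_H\btau=\alpha>2\alpha-1$. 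Morally your $-\tfrac{\beta^2}{8}$ correction is the shadow of the paper's $e^{-c\hbeta^2}$, but the martingale-plus-Hausdorff-dimension argument on the continuum is what actually produces the $W$-measurable null/full event you would need, and your sketch does not reach it.
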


The seeming contradiction between \eqref{eq:abscont} and \eqref{eq:sing}
is resolved noting that in \eqref{eq:abscont} one cannot exchange ``$\forall A \subseteq \cC$'' and
``for $\bbP$-a.e. $W$'', because there are uncountably
many $A \subseteq \cC$ (and, of course, the set $A$ appearing in \eqref{eq:sing}
depends on the realization of $W$).

\medskip

We conclude our main results with an explicit characterization of the CDPM.
As we discuss in Appendix~\ref{A:RCS},
each closed subset $C \subseteq \R$ can be identified with two
non-decreasing and right-continuous functions
$\tg_t(C)$ and $\td_t(C)$, defined for $t\in\R$ by
\begin{equation} \label{eq:gtdt0}
\tg_t(C) := \sup\{x: \
x\in C\cap [-\infty, t]\}, \qquad \td_t(C) := \inf\{x: \
x\in C \cap (t, \infty]\} \,.
\end{equation}
As a consequence, the law of a \emph{random} closed subset $X \subseteq \R$
is uniquely determined by the finite dimensional distributions of
the random functions $(\tg_t(X))_{t\in\R}$ and $(\td_t(X))_{t\in\R}$,
i.e.\ by the probability laws on $\bR^{2k}$ given,
for $k \in \N$ and $-\infty < t_1 < t_2 < \ldots < t_k < \infty$, by
\begin{equation} \label{eq:fdd0}
	\p\big( \tg_{t_1}(X) \in \dd x_1, \, \td_{t_1}(X)
	\in \dd y_1, \, \ldots, \, \tg_{t_k}(X) \in \dd x_k, \, \td_{t_k}(X) \in \dd y_k \big) \,.
\end{equation}
As a further simplification, it is enough to focus on the event that
$X \cap [t_i, t_{i+1}] \ne \emptyset$ for all $i=1,\ldots, k$, that is, one can
restrict $(x_1, y_1, \ldots, x_k, y_k)$ in \eqref{eq:fdd0} on the following set:
\begin{equation} \label{eq:restr}
	\begin{split}
	\cR_{t_0, \ldots, t_{k+1}}^{(k)} := \big\{(x_1, y_1, \ldots, x_k, y_k) : \
	& \ x_i \in [t_{i-1},t_i], \ y_i \in [t_i,t_{i+1}]
	\text{ for } i=1,\ldots, k, \\
	& \text{ such that } y_{i} \le x_{i+1} \text{ for } i=1,\ldots, k-1  \, \big\} ,
\end{split}
\end{equation}
with $t_0 = -\infty$ and $t_{k+1} := +\infty$.
The measures \eqref{eq:fdd0} restricted on the set \eqref{eq:restr}
will be called \emph{restricted finite-dimensional distributions} (f.d.d.) of the random set $X$
(see \S\ref{sec:restricted}).

\smallskip

We can characterize the CDPM by
specifying its restricted f.d.d.. We need two ingredients:
\begin{enumerate}
\item The restricted f.d.d. of the $\alpha$-stable regenerative set conditioned to visit $T$, i.e.\
of the reference law $\bP_T^{\alpha;\rm c}$ in \eqref{eq:bPnT}:
by Proposition~\ref{P:univrenew}, these are
absolutely continuous with respect to the Lebesgue measure
on $\R^{2k}$, with the following density (with $y_{0} := 0$):
\begin{align}\label{fddalpha}
\mathsf{f}_{T;t_1,\ldots,t_k}^{\alpha;\rm c}(x_1,y_1,\ldots,x_k,y_k)
= & \Bigg( \prod_{i=1}^{k}
\frac{C_\alpha}{(x_{i} - y_{i-1})^{1-\alpha}
\, (y_i-x_i)^{1+\alpha}}
\Bigg) \frac{T^{1-\alpha}}{(T-y_k)^{1-\alpha}} \,, \\
	\label{eq:Calpha}
	\text{with}
	\quad C_\alpha := & \,\frac{\alpha \sin(\pi\alpha)}{\pi} \,,
\end{align}
where we restrict $(x_1, y_1, \ldots, x_k, y_k)$ on the set \eqref{eq:restr},
with $t_0 = 0$ and $t_{k+1} := T$.
\item A family of \emph{continuum partition functions} for our model:
\begin{equation*}
	\big(\bZ^{\alpha; W, \rm c}_{\hat\beta, \hat h}(s, t)\big)_{0\leq s\leq t <\infty} \,.
\end{equation*}
These were constructed
in \cite{CSZ13} as the limit, in the sense of finite-dimensional distributions,
of the following discrete family (under an appropriate rescaling):
\begin{equation}\label{eq:Zpinc}
	Z_{\gb,h}^{\go,\rm c}(a,b):=\E\Big[ e^{\sum_{n=a+1}^{b-1}
	(\beta \omega_n	-\Lambda(\beta) + h) \ind_{\{n\in\tau\}}}\,\Big| a\in\tau, \,
	b \in \tau \Big], \qquad 0\leq a\leq b \,.
\end{equation}
In Section~\ref{S:convZ} we upgrade the f.d.d. convergence to the process level,
deducing important a.s. properties,
such as strict positivity and continuity
(cf. Theorems~\ref{T:cconv2} and~\ref{T:bZprop}).
\end{enumerate}

We can finally characterize the restricted f.d.d. of the CDPM as follows.

\begin{theorem}[F.d.d. of the CDPM]\label{T:cconv3}
Fix $\alpha \in (\frac{1}{2},1)$, $T>0$,
$\hbeta > 0$, $\hh\in\R$ and let
$\big(\bZ^{\alpha; W, \rm c}_{\hat\beta, \hat h}(s, t)\big)_{0\leq s\leq t < \infty}$
be an a.s. continuous version of the continuum partition functions.
For a.e.\ realization of $W$, the quenched law $\bP^{\alpha; W, \rm c}_{T, \hat \beta, \hat h}$
of the CDPM (cf. Theorem~\ref{th:main1})
can be defined as the unique probability law
on $\cC$ which satisfies the following properties:
\begin{ienumerate}
\item\label{it:1b} $\bP^{\alpha; W, \rm c}_{T, \hat \beta, \hat h}$
is supported on closed subsets $\btau \subseteq [0,T]$ with $\{0, T\} \subseteq \btau$.

\item\label{it:2b} For all $k\in\N$ and $0 =: t_0 <t_1<\cdots <t_k < t_{k+1} := T$,
and for $(x_1, y_1, \ldots, x_k, y_k)$ restricted on the set
$\cR^{(k)}_{t_0,\ldots, t_{k+1}}$ in \eqref{eq:restr},
the f.d.d. of $\bP^{\alpha; W, \rm c}_{T, \hat \beta, \hat h}$ have densities given by
\begin{equation}\label{CDPMfdd}
\begin{aligned}
& \frac{\bP^{\alpha; W, \rm c}_{T, \hat \beta, \hat h}\big(\tg_{t_1}(\btau) \in \dd x_1,\,
\td_{t_1}(\btau) \in \dd y_1, \ldots, \tg_{t_k}(\btau)\in \dd x_k,\, \td_{t_k}(\btau) \in \dd y_k\big)}
{\dd x_1\, \dd y_1 \, \cdots \dd x_k\, \dd y_k} \\
& \ \ =\,
\Bigg(\, \frac{\prod_{i=0}^k \bZ^{\alpha; W, \rm c}_{\hat \beta, \hat h}(y_{i},x_{i+1})}
{\bZ^{\alpha; W, \rm c}_{\hat \beta, \hat h}(0,T)} \Bigg)\,
\mathsf{f}_{T;t_1,\ldots,t_k}^{\alpha;\rm c}(x_1,y_1,\ldots,x_k,y_k) \,,
\end{aligned}
\end{equation}
where we set $y_0 := 0$ and $x_{k+1} := T$, and where
$\mathsf{f}_{T;t_1,\ldots,t_k}^{\alpha;\rm c}(\cdot)$
is defined in \eqref{fddalpha}.
\end{ienumerate}
\end{theorem}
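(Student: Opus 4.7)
The plan is to prove Theorem~\ref{T:cconv3} by splitting it into (a) uniqueness of any probability law on $\cC$ satisfying (i) and (ii), and (b) identification of the CDPM $\bP^{\alpha;W,\rm c}_{T,\hbeta,\hh}$ from Theorem~\ref{th:main1} as such a law. Uniqueness is immediate from the framework of Appendix~\ref{A:RCS}: a probability measure on $\cC$ supported on closed subsets of $[0,T]$ containing $\{0,T\}$ is characterized by its restricted f.d.d.\ on the sets $\cR^{(k)}_{0,t_1,\ldots,t_k,T}$ of \eqref{eq:restr}, and (ii) specifies exactly those densities. Property (i) is inherited from the prelimit laws $\p^{\omega,\rm c}_{NT,\beta_N,h_N}(\dd(\tau/N))$ via the weak convergence of Theorem~\ref{th:main1}, since each prelimit is a.s.\ supported on closed subsets of $[0,T]$ containing $\{0,T\}$ and this support property defines a closed subset of $\cC$.

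Property (ii) is the heart of the matter. I would compute the discrete restricted f.d.d.\ of $\p^{\omega,\rm c}_{NT,\beta_N,h_N}$ and pass to the scaling limit. By the renewal property of $\tau$ and the multiplicative structure of the Gibbs weight $e^{H_{NT}(\tau)}$, the probability that the pattern $\{\tg_{t_i}(\tau/N)=x_i,\ \td_{t_i}(\tau/N)=y_i\}_{i=1}^k$ holds (for points $(x_i,y_i)$ restricted to lattice sites in $\cR^{(k)}_{0,t_1,\ldots,t_k,T}$) factorizes across excursions: each empty ``gap'' contributes $K(N(y_j-x_j))$, each ``visited'' interval contributes $u(N(x_{i+1}-y_i))\,Z^{\omega,\rm c}_{\beta_N,h_N}(Ny_i,Nx_{i+1})$ with $y_0:=0$, $x_{k+1}:=T$, and the whole is normalized by $Z^{\omega,\rm c}_{\beta_N,h_N}(0,NT)\,u(NT)$. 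Passing to a Lebesgue density on $\R^{2k}$ introduces a Jacobian $N^{2k}$; invoking the Doney-type renewal asymptotics $u(Nx)\sim C_\alpha\, x^{\alpha-1}/(L(N)N^{1-\alpha})$ together with \eqref{eq:ass}, all powers of $N$ and $L(N)$ balance exactly, the constants $C_\alpha$ combine correctly, and the deterministic part converges pointwise to $\mathsf{f}^{\alpha;\rm c}_{T;t_1,\ldots,t_k}$ from \eqref{fddalpha}. The remaining random factor converges jointly in distribution to $\prod_{i=0}^k \bZ^{\alpha;W,\rm c}_{\hbeta,\hh}(y_i,x_{i+1})/\bZ^{\alpha;W,\rm c}_{\hbeta,\hh}(0,T)$ by the process-level convergence and a.s.\ continuity and positivity of the continuum partition functions (Theorems~\ref{T:cconv2} and~\ref{T:bZprop} of Section~\ref{S:convZ}).

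The main obstacle is to promote these two convergences into a $\bbP$-a.s.\ identification of the f.d.d.\ of the random limit measure $\bP^{\alpha;W,\rm c}_{T,\hbeta,\hh}$, because the maps $C\mapsto\tg_t(C),\,\td_t(C)$ are only semi-continuous on $\cC$, so the $\cM_1(\cC)$-convergence of Theorem~\ref{th:main1} does not transfer to f.d.d.\ convergence by itself. I would resolve this via Skorokhod's representation, realizing the convergence of Theorem~\ref{th:main1} jointly with the process-level convergence of the partition functions $Z^{\omega,\rm c}_{\beta_N,h_N}(Ns,Nt)$ on a common probability space. Using that the $\alpha$-stable regenerative set a.s.\ avoids any fixed deterministic time $t_i$, together with the absolute continuity statement of Theorem~\ref{T:averageabs} that transfers this property to the quenched CDPM for $\bbP$-a.e.\ $W$, the observables $\tg_{t_i},\td_{t_i}$ become a.s.\ continuous at the limit, so the discrete f.d.d.\ converge in law to the explicit expression derived above. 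Combined with the uniqueness from (a), this identifies $\bP^{\alpha;W,\rm c}_{T,\hbeta,\hh}$ with the law characterized by (i)--(ii) for $\bbP$-a.e.\ $W$, completing the proof.
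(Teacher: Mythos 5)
Your overall split into (a) uniqueness via restricted f.d.d.\ and (b) identification by passing discrete f.d.d.\ through the scaling limit, as well as the core renewal computation in (b) (factorization over excursions, balancing the $N$ and $L(N)$ powers via the Doney asymptotics, and the a.s.\ convergence of the partition-function ratio after coupling $\omega$ with $W$), all match the paper. However, your method of \emph{closing} step (b) has a genuine circularity. You take Theorem~\ref{th:main1} as a given input and then want to read off the f.d.d.\ of the limit measure; to handle the semi-continuity of $\tg_t,\td_t$ you invoke Theorem~\ref{T:averageabs} to argue that the limit set a.s.\ avoids any fixed $t_i$. But in the paper's logical order, \emph{both} Theorem~\ref{th:main1} and Theorem~\ref{T:averageabs} are downstream of Theorem~\ref{T:cconv3}: the convergence statement in Theorem~\ref{th:main1} is established \emph{as a by-product} of the f.d.d.\ identification you are trying to prove, and Theorem~\ref{T:averageabs} is proved in Section~\ref{S:CDPMprop} by comparing the densities \eqref{CDPMfdd} with \eqref{fddalpha}, which again presupposes Theorem~\ref{T:cconv3}. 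As written, your proof assumes what it sets out to establish.

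The fix — and the route the paper actually takes — is to avoid Theorem~\ref{th:main1} and Theorem~\ref{T:averageabs} entirely at this stage by using Proposition~\ref{P:cK+char2}~\eqref{it:restiii} rather than part~\eqref{it:restii}. Part~\eqref{it:restiii} is precisely engineered to circumvent the semi-continuity issue you raise: once you prove, for a.e.\ fixed $(\omega,W)$ under the Skorokhod coupling $Z_N\to\bZ$, that the restricted f.d.d.\ of $X_N := \tau/N\cap[0,T]$ under $\p^{\omega,\rm c}_{NT,\beta_N,h_N}$ converge weakly to the measures with densities \eqref{CDPMfdd} (which is exactly your renewal computation together with the dominated Riemann-sum argument already carried out for the homogeneous case in Proposition~\ref{P:univrenew}), Proposition~\ref{P:cK+char2}~\eqref{it:restiii} \emph{automatically} gives you a limiting $\cC$-valued random variable $X_\infty$ whose law has these restricted f.d.d.\ and toward which $X_N$ converges in distribution. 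This both defines $\bP^{\alpha;W,\rm c}_{T,\hbeta,\hh}$ (a strengthened, quenched form of Theorem~\ref{th:main1}) and identifies its f.d.d., with no need to verify a priori that the $t_i$ are continuity times. Your semi-continuity worry is real, but the correct resolution is the constructive direction of the appendix framework, not the one-sided application of Proposition~\ref{P:cK+char2}~\eqref{it:restii} plus an auxiliary absolute-continuity input that you do not yet have.
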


\subsection{Discussion and perspectives}

\label{S:disrel}

We conclude the introduction with some observations on the results
stated so far, putting them in the context of the existing literature,
stating some conjectures and outlining further directions of research.

\medskip
\smallskip
\noindent\textbf{1. (Disorder relevance).}
The parameter $\beta$ tunes the strength of the disorder
in the model $\p^{\omega, \rm c}_{N,\beta,h}$,
cf.\ \eqref{eq:condpm}, \eqref{eq:Gweight}.
When $\beta=0$, the sequence $\omega$ disappears and we obtain the
so-called \emph{homogeneous pinning model}.
Roughly speaking, the effect of disorder is said to be:
\begin{itemize}
\item  {\em irrelevant} if the disordered model
($\beta > 0$) has the same qualitative behavior
as the homogeneous model ($\beta = 0$),
provided the disorder is sufficiently weak ($\beta \ll 1$);
\item {\em relevant} if, on the other hand, an arbitrarily small amount of disorder (any $\beta > 0$)
alters the qualitative behavior of the homogeneous model ($\beta = 0$).
\end{itemize}
Recalling that $\alpha$
is the exponent appearing in \eqref{eq:ass}, it is known that
disorder is irrelevant for pinning models when $\alpha < \frac{1}{2}$ and relevant when
$\alpha > \frac{1}{2}$, while the case $\alpha = \frac{1}{2}$ is called marginal and is more delicate
(see~\cite{G10} and the  references therein for an overview).

\smallskip

It is natural to interpret our results from this perspective.
For simplicity, in the sequel we set $h_N := \hh \, L(N)/N^\alpha$,
as in \eqref{eq:scalingbetah},
and we use the notation $\p^{\omega,\rm c}_{NT , \beta_N ,h_N}(\dd (\tau/N))$,
cf.\ \eqref{eq:dtau/N}, for
the law of the rescaled set $\tau/N$ under the pinning model.

\smallskip

In the homogeneous case ($\beta = 0$),
it was shown in
\cite[Theorem~3.1]{Soh09}\footnote{Actually \cite{Soh09}
considers the non-conditioned case \eqref{eq:Gweight},
but it can be adapted to the conditioned case.} that the weak limit of
$\p^{\alpha; \rm c}_{NT ,0,h_N}(\dd (\tau/N))$ as $N\to\infty$
is a probability law $\bP_{T, 0, \hh}^{\alpha; \rm c}$
on $\cC$ which is \emph{absolutely continuous} with respect to the reference law
$\bP_{T}^{\alpha; \rm c}$ (recall \eqref{eq:bPnT}):
\begin{equation}\label{eq:Sohier}
	\frac{\dd \bP_{T, 0, \hh}^{\alpha; \rm c}}{\dd \bP_{T}^{\alpha; \rm c}}(\btau) =
	\frac{e^{\hh \cL_T(\btau)}}{\bE[e^{\hh \cL_T(\btau)}]} \,,
\end{equation}
where $\cL_T(\btau)$ denotes the so-called local time associated
to the regenerative set $\btau$.
We stress that this result holds with \emph{no restriction on $\alpha \in (0,1)$}.

Turning to the disordered model $\beta > 0$,
what happens for $\alpha \in (0,\frac{1}{2})$?
In analogy with \cite{B89,CY06}, we conjecture
that \emph{for fixed $\beta > 0$ small enough},
the limit in distribution of
$\P_{NT, \beta, h_N}^{\omega, \rm c}(\dd (\tau/N))$
as $N\to\infty$
\emph{is the same as for the homogeneous model} ($\beta = 0$), i.e.\ the
law $\bP_{T, 0, \hh}^{\alpha; \rm c}$ defined in \eqref{eq:Sohier}.
Thus, for $\alpha \in (0,\frac{1}{2})$, the continuum model
is \emph{non-disordered} (deterministic)
and absolutely continuous with respect to the reference law.

This is in striking contrast with the case $\alpha \in (\frac{1}{2},1)$, where our results show that
the continuum model $\bP_{T, \hbeta, \hh}^{\alpha; W, \rm c}$ is truly \emph{disordered} and
singular with respect to the reference law (cf. Theorems~\ref{th:main1}, \ref{T:averageabs},
\ref{T:quenchsing}).
In other terms, for $\alpha \in (\frac{1}{2},1)$,
\emph{disorder survives in the scaling limit}
(even though $\beta_N, h_N \to 0$) and breaks down the absolute continuity with
respect to the reference law,
providing a clear manifestation of disorder relevance.

We refer to \cite{CSZ13} for a general discussion on disorder relevance in our framework.

\medskip
\smallskip
\noindent\textbf{2. (Universality).}
The quenched law $\bP_{T, \hbeta, \hh}^{\alpha; W, \rm c}$ of the CDPM
is a \emph{random probability law on $\cC$}, i.e. a random variable
taking values in $\cM_1(\cC)$.
Its distribution is a probability law on the space $\cM_1(\cC)$
---i.e. an element of $\cM_1(\cM_1(\cC))$--- which
is \emph{universal}: it depends on
few macroscopic parameters (the time horizon $T$, the disorder strength and bias $\hbeta, \hh$ and
the exponent $\alpha$) but not on finer details of the discrete model
from which it arises, such as the distributions of $\omega_1$ and of $\tau_1$:
all these details disappear in the scaling limit.

Another important universal aspect of the CDPM is linked to
\emph{phase transitions}.
We do not explore this issue here,
referring to \cite[\S1.3]{CSZ13} for a detailed discussion,
but we mention that the CDPM leads to \emph{sharp predictions}
about the asymptotic behavior of the \emph{free energy} and \emph{critical curve}
of discrete pinning models, in the weak disorder regime $\lambda, h \to 0$.

\medskip
\noindent\textbf{3. (Bessel processes).}
In this paper we consider pinning models built on top of general renewal
processeses $\tau = (\tau_k)_{k\in\N_0}$ satisfying \eqref{eq:ass} and \eqref{eq:renass0+}.
In the special case when the renewal
process is the zero level set of a Bessel-like random walk \cite{A08}
(recall Remark~\ref{R:Ken}), one can define the pinning model
\eqref{eq:Gweight}, \eqref{eq:condpm} as a probability law on random walk paths
(and not only on their zero level set).

Rescaling the paths diffusively,
one has an analogue of Theorem~\ref{th:main1},
in which the CDPM is built as a random probability
law on the space $C([0,T], \R)$ of continuous functions from $[0,T]$ to $\R$.
Such an extended CDPM is a continuous process $(\bX_t)_{t\in [0,T]}$, that
can be heuristically described as
a \emph{Bessel process of dimension $\delta =2(1-\alpha)$ interacting
with an independent Brownian environment $W$ each time $X_t = 0$}.
The ``original'' CDPM of our Theorem~\ref{th:main1}
corresponds to the zero level set $\btau := \{t\in[0,T]: \bX_t = 0\}$.

We stress that, starting from the zero level set $\btau$,
one can reconstruct the whole process $(\bX_t)_{t\in [0,T]}$
by pasting independent Bessel excusions on top of $\btau$
(more precisely, since the open set $[0,T] \setminus \btau$ is a countable
union of disjoint open intervals, one attaches a Bessel excursion to each of these
intervals).\footnote{Alternatively,
one can write down explicitly the f.d.d. of
$(\bX_t)_{t\in [0,T]}$ in terms of the continuum partition functions
$\bZ^{\alpha; W, \rm c}_{\hat \beta, \hat h}(s,t)$ (see Section~\ref{S:convZ}).
We skip the details for the sake of brevity.}
This provides a rigorous definition of $(\bX_t)_{t\in [0,T]}$
in terms of $\btau$ and shows that the zero level set is
indeed the fundamental object.

\medskip
\noindent\textbf{4. (Infinite-volume limit).}
Our continuum model
$\bP_{T, \hbeta, \hh}^{\alpha; W, \rm c}$ is built on a finite interval $[0,T]$.
An interesting open problem is to let $T \to \infty$, proving that
$\bP_{T, \hbeta, \hh}^{\alpha; W, \rm c}$ converges
in distribution to an \emph{infinite-volume CDPM} $\bP_{\infty, \hbeta, \hh}^{\alpha; W, \rm c}$.
Such a limit law would inherit scaling properties from the continuum partition functions,
cf.\ Theorem~\ref{T:bZprop}~\eqref{it:3}.
(See also \cite{RVY08} for related work in the non-disordered case $\hbeta = 0$.)

\subsection{Organization of the paper}

The rest of the paper is organized as follows.
\begin{itemize}
\item In Section~\ref{S:convZ}, we study the properties of continuum partition functions.

\item In Section~\ref{S:CDPMchar}, we prove Theorem~\ref{T:cconv3} on the characterization of the
CDPM, which also yields Theorem~\ref{th:main1}.

\item In Section~\ref{S:CDPMprop}, we prove Theorems~\ref{T:averageabs} and~\ref{T:quenchsing}
on the relations between the CDPM and the $\alpha$-stable regenerative set.

\item In Appendix~\ref{A:RCS}, we
describe the measure-theoretic background needed to study
random closed subsets of $\R$, which is of independent interest.

\item Lastly,
in Appendices \ref{app:renew_est}
and \ref{app:int_est} we prove some auxiliary estimates.
\end{itemize}

\section{Continuum partition functions as a process}
\label{S:convZ}

In this section we focus on a family
$\big(\bZ^{\alpha; W, \rm c}_{\hat\beta, \hat h}(s, t)\big)_{0\leq s\leq t <\infty}$
of \emph{continuum partition functions} for our model,
which was recently introduced
in \cite{CSZ13} as the limit of the discrete family \eqref{eq:Zpinc}
in the sense of finite-dimensional distributions.
We upgrade this convergence to the process level (Theorem~\ref{T:cconv2}),
which allows us to deduce important properties (Theorem~\ref{T:bZprop}).
Besides their own interest, these results are the key to the
construction of the CDPM.

\subsection{Fine properties of continuum partition functions}

Recalling \eqref{eq:Zpinc}, where $Z_{\gb,h}^{\go, \rm c}(a, b)$ is defined
for $a,b\in\N_0$, we extend
$Z_{\gb,h}^{\go, \rm c}(\cdot, \cdot)$
to a continuous function on
$$
[0,\infty)^2_\le :=\{(s,t) \in [0,\infty)^2: \ 0\leq s\leq t <\infty\} \,,
$$
bisecting each unit square
$[m-1,m]\times [n-1,n]$, with $m\leq n\in \N$, along the
main diagonal and
linearly interpolating $Z_{\gb,h}^{\go, \rm c}(\cdot, \cdot)$ on each triangle.
In this way, we can regard
\begin{equation} \label{eq:contfam}
	\big(Z^{\omega,\rm c}_{\beta_N,h_N}(sN, tN)\big)_{0\leq s\leq t <\infty}
\end{equation}
as random variables taking values in the space $C([0,\infty)^2_\le, \R)$,
equipped with the topology of uniform convergence on compact sets and with the corresponding
Borel $\sigma$-algebra.
The randomness comes from the disorder sequence $\omega = (\omega_n)_{n\in\N}$.

Even though our main interest in this paper
is for $\alpha \in (\frac{1}{2},1)$, we also include the case
$\alpha > 1$ in the following key result, which is proved in
Subsection~\ref{S:proofcconv2} below.

\begin{theorem}[Process Level Convergence of Partition Functions]\label{T:cconv2}
Let $\alpha\in (\frac{1}{2}, 1)\cup (1,\infty)$,
$\hat\beta > 0$, $\hat h\in\R$.
Let $\tau$ be a renewal process satisfying \eqref{eq:ass}
and \eqref{eq:renass0+}, and $\omega$ be an
i.i.d.\ sequence satisfying \eqref{eq:omegacond}.
For every $N\in\N$, define $\beta_N, h_N$ by (recall \eqref{eq:scalingbetah})
\begin{equation}\label{eq:scalingbetah>1}
	\begin{cases}
	\displaystyle\beta_N :=
	\hbeta\frac{L(N)}{ N^{\alpha - \frac{1}{2}}} \\
	\rule{0pt}{2em}\displaystyle h_N := \hh\frac{L(N)}{ N^{\alpha} }
    \end{cases}
    \ \text{for } \alpha \in (\tfrac{1}{2},1) \,,
    \qquad
	\begin{cases}
	\displaystyle\beta_N := \frac{\hbeta}{\sqrt{N}} \\
	\rule{0pt}{2em}\displaystyle h_N := \frac{\hh}{N}
	\end{cases}
	\ \text{for } \alpha > 1 \,.
\end{equation}
As $N\to\infty$ the two-parameter family
$\big(Z^{\omega,\rm c}_{\beta_N,h_N}(sN, tN) \big)_{0\leq s\leq t <\infty}$
converges in distribution on $C([0,\infty)^2_\le, \R)$ to a family
$\big(\bZ^{\alpha; W, \rm c}_{\hat\beta, \hat h}(s, t)\big)_{0\leq s\leq t <\infty}$,
called \emph{continuum partition functions}.
For all $0\leq s\leq t<\infty$, one has the Wiener chaos representation
\begin{equation}\label{cpinWC}
\bZ^{\alpha; W, \rm c}_{\hat\beta, \hat h}(s, t)
= 1 + \sum_{k=1}^\infty \ \ \idotsint\limits_{s<t_1<\cdots <t_k<t}
\bpsi^{\alpha; \rm c}_{s,t}(t_1,\ldots, t_k)
\prod_{i=1}^k (\hat \beta \, \dd W_{t_i} + \hat h \, \dd t_i) \,,
\end{equation}
where $W = (W_t)_{t\ge 0}$ is a standard Brownian motion,
the series in \eqref{cpinWC} converges in $L^2$, and
the kernel $\bpsi^{\alpha; \rm c}_{s,t}(t_1,\ldots, t_k)$ is defined as follows,
with $C_\alpha$ as in \eqref{eq:Calpha} and $t_0 := s$:
\begin{equation}\label{cpinWCphi}
\bpsi^{\alpha; \rm c}_{s,t}(t_1,\ldots, t_k) =
\begin{cases}
\displaystyle
\Bigg(\prod_{i=1}^k \frac{C_\alpha}{(t_i - t_{i-1})^{1-\alpha}} \Bigg)
\frac{(t-s)^{1-\alpha}}{(t-t_k)^{1-\alpha}}
& \text{if  } \alpha \in ( \tfrac{1}{2}, 1), \\
\rule{0pt}{1.8em}\displaystyle
\hfill\frac{1}{\e[\tau_1]^k}\hfill & \text{if  } \alpha >1.
\end{cases}
\end{equation}
\end{theorem}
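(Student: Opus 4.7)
My strategy would be to combine the finite-dimensional convergence, already established in \cite{CSZ13}, with \emph{tightness} of the two-parameter family $(Z^{\omega,\rm c}_{\beta_N,h_N}(sN,tN))_{(s,t)\in[0,\infty)^2_\le}$ in $C([0,\infty)^2_\le,\R)$, to deduce process-level convergence. Since linear interpolation keeps the partition function continuous on each elementary triangle, tightness on every compact rectangle $[0,T]^2_\le$ follows, via the multiparameter Kolmogorov-Chentsov criterion, from a uniform-in-$N$ moment bound of the form
\begin{equation*}
\E\Bigl[\bigl| Z^{\omega,\rm c}_{\beta_N,h_N}(sN,tN) - Z^{\omega,\rm c}_{\beta_N,h_N}(s'N,t'N)\bigr|^{2m}\Bigr]
\;\le\; C_m \bigl( |s-s'|^{1+\eta} + |t-t'|^{1+\eta}\bigr)
\end{equation*}
for some integer $m\ge 2$ and some exponent $\eta>0$, locally uniformly in $(s,t)$.

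To produce such a bound I would expand the partition function as a polynomial chaos,
\begin{equation*}
Z^{\omega,\rm c}_{\beta,h}(a,b) \;=\; 1 + \sum_{k\ge 1}\; \sum_{a<n_1<\cdots<n_k<b}
\psi^{\rm c}_{a,b}(n_1,\ldots,n_k) \prod_{i=1}^k \eta_{n_i},
\end{equation*}
with $\eta_n := e^{\beta\omega_n - \Lambda(\beta)+h}-1$ independent, $\bbE[\eta_n]\approx h$, $\bbvar(\eta_n)\approx\beta^2$, and discrete kernel $\psi^{\rm c}_{a,b}(n_1,\ldots,n_k) = \prod_{j=0}^{k} u(n_{j+1}-n_j)/u(b-a)$ (with $n_0:=a$, $n_{k+1}:=b$). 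After centering $\eta_n = (\eta_n-\bbE[\eta_n]) + \bbE[\eta_n]$, independence of the $\omega_n$'s causes the $L^2$-norm squared of a difference of partition functions to decouple into an $\ell^2$-sum, over the chaos order $k$ and the indices $(n_1,\ldots,n_k)$, of squared differences of the \emph{rescaled} kernels. Under the scaling \eqref{eq:scalingbetah>1}, each $\beta_N^k\psi^{\rm c}_{sN,tN}$, viewed as a Riemann sum, converges to the continuum kernel $\bpsi^{\alpha;\rm c}_{s,t}$ of \eqref{cpinWCphi}, and the quantitative regularity estimate \eqref{eq:renass0+} of the renewal mass function $u(\cdot)$ is precisely what allows one to control, uniformly in $N$, both the dependence of these kernels on $(s,t)$ (yielding the Hölder factor) and the discretization error. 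The $L^2$-integrability of the limit kernel, $\|\bpsi^{\alpha;\rm c}_{s,t}\|_{L^2}<\infty$, is what forces the dichotomy $\alpha\in(\tfrac12,1)\cup(1,\infty)$ and ensures that the series over $k$ is summable.

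To upgrade the resulting $L^2$-bound on increments to the $L^{2m}$-bound demanded by Kolmogorov-Chentsov, I would invoke hypercontractivity for polynomial chaos built on independent random variables with bounded exponential moments, in the spirit of the Mossel-O'Donnell-Oleszkiewicz invariance principle: this reduces $L^{2m}$ estimates to $L^2$ ones chaos-by-chaos, with a $k$-dependent overhead that remains summable thanks to the strict inequality $\alpha>\tfrac12$. Here the exponential moment assumption \eqref{eq:omegacond} on $\omega$ is essential. The explicit Wiener chaos representation \eqref{cpinWC}-\eqref{cpinWCphi} of the limit is then a by-product of the chaos-by-chaos $L^2$-convergence of discrete kernels to their continuum counterparts, which is already implicit in \cite{CSZ13}. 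The main technical obstacle is therefore the combined kernel-difference / renewal-estimate step of the second paragraph: producing a bound which is \emph{simultaneously} Hölder in $(s,s',t,t')$, summable in the chaos order $k$, and uniform in $N$, at the sharp integrability threshold $\alpha>\tfrac12$, which rests crucially on the regularity assumption \eqref{eq:renass0+}.
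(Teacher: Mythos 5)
Your overall route---reduce process-level convergence to tightness plus the f.d.d. convergence of \cite{CSZ13}, get tightness from uniform-in-$N$ moment bounds on increments, produce those via polynomial chaos expansion and hypercontractivity, and use \eqref{eq:renass0+} and Potter bounds in the kernel estimates---matches the paper's (the paper quotes Garsia's inequality rather than Kolmogorov--Chentsov in Step~1, but that is cosmetic). However, you have correctly identified, and then left open, the single genuinely hard part, and two structural devices the paper relies on to close it are absent from your sketch.

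First, your centering $\eta_n=(\eta_n-\bbE\eta_n)+\bbE\eta_n$ does \emph{not} decouple the $L^2$ norm of a difference ``into an $\ell^2$-sum of squared differences of the rescaled kernels.'' Substituting into each product $\prod_{i\in I}\eta_{n_i}$ and re-expanding mixes chaos orders: a fixed index set $I$ contributes to all orders $k'\le |I|$, and the effective kernel at order $k'$ is a sum over $I\supseteq J$, $|J|=k'$, weighted by powers of $\bbE\eta_n\approx h_N$. The paper instead performs a \emph{tilting change of measure} on the discrete noise (\cite[Lemma B.1]{CSZ13}): the variables $\zeta_{N,i}$ are replaced by zero-mean i.i.d.\ $\tilde\zeta_{N,i}$ under a tilted law, the chaos coefficients are left untouched, and the price is a harmless H\"older correction. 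Without some such device, the orthogonality by chaos order that your $L^2$ argument presupposes is not literally available.

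Second, and more seriously, \eqref{eq:renass0+} alone does not yield a H\"older bound on all kernel differences. If $n_k$ (the largest chaos index) lies within $O(1)$ of $q$, then $u(r-n_k)/u(r)-u(q-n_k)/u(q)$ is of order one, not of order $((r-q)/N)^{\delta}$, precisely because \eqref{eq:renass0+} controls $u(n+\ell)/u(n)-1$ only when $\ell$ is small compared with $n$. The paper's resolution is a \emph{truncation at level} $m=q-\sqrt{N(r-q)}$ (\eqref{mchoice}): the increment $\Psi_{N,r}-\Psi_{N,q}$ is split into a piece $\Xi_1$ whose chaos indices all lie in $\{1,\dots,m\}$ (there $q-n_k\ge \sqrt{N(r-q)}\gg r-q$, so \eqref{eq:renass0+} applies and the kernel difference is genuinely H\"older-small), plus two pieces $\Xi_2,\Xi_3$ with at least one index above $m$ (there no difference is extracted; one uses an \emph{absolute} $L^2$ bound leveraging the shortness of $(m,r)$). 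The specific choice of $m$ is calibrated so that both regimes contribute the same power of $(r-q)/N$. Without such a decomposition, the ``compare kernels and apply \eqref{eq:renass0+}'' step does not close, which is exactly the obstacle you named in your last sentence but did not overcome.
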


\begin{remark}\rm\label{R:CM}
The integral in \eqref{cpinWC}
is defined by expanding formally the product of differentials
and reducing to standard multiple Wiener and Lebesgue integrals.
An alternative equivalent definition is to note that, by Girsanov's theorem,
the law of $(\hbeta W_t +\hh  t)_{t\in [0,T]}$ is
absolutely continuous w.r.t.\ that of $(\hbeta W_t)_{t\in [0,T]}$, with Radon-Nikodym density
\begin{equation} \label{eq:RN}
	\mathfrak{f}_{T, \hbeta, \hh}(W) :=
	e^{(\frac{\hat h}{\hat \beta}) \, W_T -
	\frac{1}{2} (\frac{\hat h}{\hat \beta})^2 T} \,.
\end{equation}
It follows that $\big(\bZ^{\alpha; W, \rm c}_{\hat\beta, \hat h}(s, t)\big)_{0\leq s\leq t\leq T}$
has the same law as $\big(\bZ^{\alpha; W, \rm c}_{\hat\beta, 0}(s, t)\big)_{0\leq s\leq t\leq T}$
(for $\hh = 0$) under
a change of measure with density \eqref{eq:RN}. For further details, see~\cite{CSZ13}.
\end{remark}

\begin{remark}\rm
Theorem~\ref{T:cconv2}
still holds if we also include the two-parameter family
of \emph{unconditioned} partition functions
$\big(Z^{\omega}_{\beta_N,h_N}(sN, tN)\big)_{0\leq s\leq t<\infty}$,
defined the same way as $Z^{\omega, \rm c}_{\beta, h}(a,b)$ in \eqref{eq:Zpinc}, except
for removing the conditioning on $b\in \tau$. The limiting process
$\bZ^{\alpha; W}_{\hat\beta, \hat h}(s, t)$ will then have a kernel $\bpsi_{s,t}^{\alpha}$, which
modifies $\bpsi^{\alpha; \rm c}_{s,t}$ in \eqref{cpinWCphi}, by setting
\begin{equation}\label{pinWCphi}
\bpsi^{\alpha}_{s,t}(t_1,\ldots, t_k) =
\prod_{i=1}^k \frac{C_\alpha}{(t_i - t_{i-1})^{1-\alpha}}, \qquad \text{if  }
\alpha \in (\tfrac{1}{2}, 1).
\end{equation}
\end{remark}

\smallskip

By Theorem~\ref{T:cconv2}, we can fix a version of the continuum partition functions
$\bZ^{\alpha; W, \rm c}_{\hat\beta, \hat h}(s, t)$
which is continuous in $(s,t)$.
This will be implicitly done henceforth.
We can then state some fundamental properties,
proved in Subsection~\ref{S:proofbZprop}.

\begin{theorem}[Properties of Continuum Partition Functions]\label{T:bZprop}
For all $\alpha \in (\tfrac{1}{2}, 1)$, $\hat\beta>0$, $\hat h\in \R$ the following properties hold:
\begin{ienumerate}
\item\label{it:1} \emph{(Positivity)}
For a.e.\ realization of $W$, the function
$(s,t) \mapsto \bZ^{\alpha; W, \rm c}_{\hat\beta, \hat h}(s,t)$ is continuous and strictly
positive at all $0\leq s\leq t < \infty$.

\item\label{it:2} \emph{(Translation Invariance)} For any
fixed $t>0$,
the process $(\bZ^{\alpha; W, \rm c}_{\hat\beta, \hat h}(t,t+u))_{u\geq 0}$ has the same distribution as
$(\bZ^{\alpha; W, \rm c}_{\hat\beta, \hat h}(0,u))_{u\geq 0}$, and is independent of
$(\bZ^{\alpha; W, \rm c}_{\hat\beta, \hat h}(s,u))_{0\leq s\leq u\leq t}$.

\item\label{it:3} \emph{(Scaling Property)} For any constant $A>0$,
one has the equality in distribution
\begin{equation}\label{bZscaling}
\big(\bZ^{\alpha; W, \rm c}_{\hat\beta, \hat h}(As, At)\big)_{0\leq s\leq t <\infty}
\stackrel{\rm dist}{=} \Big(\bZ^{\alpha; W, \rm c}_{A^{\alpha-1/2}\hat\beta, A^\alpha\hat h}(s, t)
\Big)_{0\leq s\leq t < \infty} \,.
\end{equation}

\item\label{it:4} \emph{(Renewal Property)}
Setting $\bZ(s,t) := \bZ^{\alpha; W, c}_{\hat\beta, \hat h}(s,t)$
for simplicity,
for a.e. realization of $W$ one has,
for all $0\leq s<u<t < \infty$,
\begin{equation}\label{bUdecomp}
\frac{C_\alpha\,\bZ(s,t)}{(t-s)^{1-\alpha}}
= \int_{x\in (s,u)} \int_{y\in (u,t)}
\frac{C_\alpha\,\bZ(s,x)}{(x-s)^{1-\alpha}} \,
\frac{1}{(y-x)^{1+\alpha}} \, \frac{C_\alpha\,
\bZ(y,t)}{(t-y)^{1-\alpha}}\,
\dd x \, \dd y \,,
\end{equation}
which can be rewritten, recalling \eqref{fddalpha}, as follows:
\begin{equation} \label{eq:bUdecomp+}
	\bZ(s,t) =
	\bE^{\alpha; c}_{t-s} \Big[ \bZ \big( s,\tg_u(\btau) \big) \,
	\bZ \big( \td_u(\btau),t \big) \Big] \,.
\end{equation}
\end{ienumerate}
\end{theorem}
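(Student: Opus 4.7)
The plan is to deduce parts (ii) and (iii) directly from the Wiener chaos representation \eqref{cpinWC}--\eqref{cpinWCphi}, obtain (iv) as the $N\to\infty$ limit of a discrete renewal decomposition using Theorem~\ref{T:cconv2}, and finally establish the positivity claim in (i) by combining (ii)--(iv) with the Cameron--Martin observation of Remark~\ref{R:CM}. The continuity in (i) is already part of Theorem~\ref{T:cconv2}.

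For (ii), I set $W^{(t)}_u := W_{t+u} - W_t$, a standard Brownian motion independent of $\sigma(W_r : r \le t)$, hence of $\bZ(s,u)$ for $0\leq s\leq u\leq t$. Changing variables $t_i = t + s_i$ in \eqref{cpinWC} for $\bZ(t,t+u)$, the translation-invariant form of \eqref{cpinWCphi} gives $\bpsi^{\alpha;\rm c}_{t,t+u}(t+s_1,\ldots,t+s_k) = \bpsi^{\alpha;\rm c}_{0,u}(s_1,\ldots,s_k)$, so the expression becomes exactly the chaos representation of $\bZ^{\alpha;W^{(t)},\rm c}_{\hat\beta,\hat h}(0,u)$. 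For (iii), the substitution $t_i = A s_i$ together with $B_u := A^{-1/2} W_{Au}$ turns $dW_{As_i}$ into $\sqrt A\, dB_{s_i}$ and $d(As_i)$ into $A\, ds_i$; the kernel scales as $A^{-k(1-\alpha)}\bpsi^{\alpha;\rm c}_{s,t}(s_1,\ldots,s_k)$, and the combination of these factors yields precisely $\prod_i(\hat\beta A^{\alpha-1/2} dB_{s_i} + \hat h A^\alpha\, ds_i)$, matching the chaos expansion of $\bZ^{\alpha;B,\rm c}_{A^{\alpha-1/2}\hat\beta, A^\alpha\hat h}(s,t)$.

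For (iv), I start from a discrete renewal decomposition. For integers $a < c < b$, conditioning on $x := \tg_c(\tau) \in [a,c]$ and $y := \td_c(\tau) \in (c,b]$ under $\P(\,\cdot\,\mid a,b\in\tau)$ gives
\[
u(b-a)\, Z^{\omega,\rm c}_{\beta,h}(a,b) = \sum_{a\leq x\leq c}\sum_{c<y\leq b} u(x-a)\, K(y-x)\, u(b-y)\, \cE_{x,y}^{\beta,h,\omega}\, Z^{\omega,\rm c}_{\beta,h}(a,x)\, Z^{\omega,\rm c}_{\beta,h}(y,b),
\]
where $\cE_{x,y}^{\beta,h,\omega}$ collects the single-site weights at $x$ and $y$ (present only when $x>a$ or $y<b$). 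Rescaling $(a,c,b) = (sN,uN,tN)$ and $(\beta,h) = (\beta_N,h_N)$ as in \eqref{eq:scalingbetah>1}, the asymptotics $u(n) \sim \frac{\alpha\sin(\pi\alpha)}{\pi\, L(n)\, n^{1-\alpha}}$ and $K(n)\sim L(n)/n^{1+\alpha}$ convert the sums into the Riemann integral on the right-hand side of \eqref{bUdecomp} with the correct prefactor $C_\alpha$, while $\cE_{x,y}^{\beta_N,h_N,\omega}\to 1$ because $\beta_N,h_N\to 0$. Theorem~\ref{T:cconv2} provides joint convergence of the rescaled partition functions on compacts, and the $L^2$ bounds implicit in the chaos expansion supply the uniform integrability needed to pass the limit inside the sum.

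The main obstacle is the strict positivity in (i). Since each discrete $Z^{\omega,\rm c}_{\beta_N,h_N}(sN,tN)$ is manifestly nonnegative, Theorem~\ref{T:cconv2} forces $\bZ(s,t)\geq 0$ a.s.\ for each fixed pair, and continuity extends this to all $(s,t)$ simultaneously. To upgrade to strict positivity, Remark~\ref{R:CM} reduces matters to $\hat h=0$, where $\bE[\bZ(s,t)]=1$ (from the $k=0$ chaos term) and $\bZ(s,t)\in L^2$. The renewal identity \eqref{bUdecomp} then produces a propagation of zeros: on $\{\bZ(s,t)=0\}$ the nonnegative integrand must vanish Lebesgue-a.e., forcing $\bZ(s,x)\bZ(y,t)=0$ for a.e.\ $(x,y)\in(s,u)\times(u,t)$. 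Iterating this decomposition on a dyadic grid $\{k 2^{-n}T\}$ and combining translation invariance (ii), scaling (iii), and independence of the Brownian increments on disjoint intervals, the event $\{\bZ(0,T)>0\}$ can be shown to belong to the tail $\sigma$-algebra of $W$; Kolmogorov's $0$--$1$ law then gives probability $0$ or $1$, and the former is excluded by $\bE[\bZ(0,T)]=1$. The delicate point is verifying that the successive zero-propagation steps genuinely localize the event in a tail $\sigma$-algebra, which is where most of the work lies.
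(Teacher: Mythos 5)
Parts (ii)--(iv) of your proposal match the paper's approach: (ii) and (iii) come directly from the chaos representation \eqref{cpinWC}--\eqref{cpinWCphi} by translation and scaling changes of variables, and (iv) from a discrete renewal decomposition whose terms converge (after multiplying by $N^{1-\alpha}$) under the coupling provided by Theorem~\ref{T:cconv2}. These are essentially correct, modulo the Riemann-sum domination that the paper spells out via \eqref{un} and Potter bounds.

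For the strict positivity in (i), however, your proposed argument has a genuine gap and is also substantially more complicated than necessary. You correctly note that on $\{\bZ(s,t)=0\}$ the renewal identity forces $\bZ(s,x)\,\bZ(y,t)=0$ for a.e.\ $(x,y)$, but then you route this through "iterating on a dyadic grid" together with independence and scaling to produce a tail event to which Kolmogorov's $0$--$1$ law would apply. You acknowledge that verifying the tail measurability "is where most of the work lies," and indeed this step is not just hard but problematic: because $\bZ(s,x)\to 1$ as $x\downarrow s$ and $\bZ(y,t)\to 1$ as $y\uparrow t$, the zeros cannot propagate to arbitrarily small windows, so the putative event does not naturally localize in a tail $\sigma$-algebra. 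The $0$--$1$ law machinery is the wrong tool here. The paper's argument is a short-circuit of exactly the observation you already have in hand: since $\bZ(u,u)=1$ and $\bZ$ is a.s.\ continuous, there is a (random) $\epsilon>0$ such that $\bZ(u,v)>0$ whenever $0\leq v-u\leq\epsilon$. Then applying \eqref{bUdecomp} with $u=(s+t)/2$ and restricting the integral to $x\in(s,(s+\epsilon)\wedge t)$, $y\in((t-\epsilon)\vee s,t)$ makes both factors $\bZ(s,x)$ and $\bZ(y,t)$ strictly positive on a set of positive measure, giving $\bZ(s,t)>0$ outright, simultaneously for all $(s,t)$. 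No chaos moment bounds, no reduction to $\hat h=0$, no $0$--$1$ law: the continuity at the diagonal already provides the nucleation that the renewal identity then spreads.
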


\smallskip
The rest of this section is devoted to the proof of Theorems~\ref{T:cconv2}
and~\ref{T:bZprop}.
We recall that assumption \eqref{eq:ass} entails the following key renewal estimates,
with $C_\alpha$ as in \eqref{eq:Calpha}:
\begin{equation}\label{un}
	u(n) \,:=\, \p(n \in \tau) \,\sim\,
	\begin{cases}
	\displaystyle
	 \frac{C_\alpha}{L(n) n^{1-\alpha}} & \text{ if } 0 < \alpha < 1, \\
	\rule{0pt}{1.9em}\displaystyle
	 \frac{1}{\e[\tau_1]} = (const.) \in (0,\infty) & \text{ if } \alpha > 1 ,
	\end{cases}
\end{equation}
by the classical renewal theorem for $\alpha > 1$ and by~\cite{GL63,D97}
for $\alpha \in (0,1)$.
Let us also note that the additional assumption \eqref{eq:renass0+} for $\alpha \in (0,1)$
can be rephrased as follows:
\begin{equation}\label{renewalbd1}
	|u(q) - u(r)| \leq C \Big(\frac{r-q}{r}\Big)^\delta u(q) \,,
	\qquad \forall r \ge q \ge n_0 \ \text{ with } \ r-q \le \epsilon r \,,
\end{equation}
up to a possible change of the constants $C, n_0, \epsilon$.

\subsection{Proof of Theorem~\ref{T:cconv2}}
\label{S:proofcconv2}

We may assume $T=1$. For convergence in distribution on $C([0,1]^2_\le,\R)$
it suffices to show that
$\{(Z^{\omega,\rm c}_{\beta_N,h_N}(sN, tN))_{0\leq s\leq t\leq 1}\}_{N\in\N}$ is a tight family,
because the finite-dimensional
distribution convergence was already obtained in \cite{CSZ13}
(see Theorem~3.1 and Remark~3.3 therein). We break down the proof into five steps.
\medskip

\noindent
{\bf Step 1. Moment criterion.} We recall a moment criterion for the H\"older continuity
of a family of multi-dimensional stochastic processes, which was also used in~\cite{AKQ14a} to
prove similar tightness results for the directed polymer model. Using Garsia's
inequality~\cite[Lemma~2]{G72} with $\Psi(x)=|x|^p$ and $\varphi(u)= u^q$ for $p\geq 1$
and $pq>2d$, the modulus of continuity of a continuous function $f:[0,1]^d \to \R$
can be controlled by
$$
|f(x)-f(y)| \leq 8 \int_0^{|x-y|} \Psi^{-1}\Big(\frac{B}{u^{2d}}\Big) \dd \varphi(u)
= 8 \int_0^{|x-y|} \frac{B^{1/p}}{u^{2d/p}} \dd (u^q)
= \frac{8 B^{1/p}q}{q-2d/p} |x-y|^{q -2d/p},
$$
where
$$
B=B(f) =\iint_{[0,1]^d\times [0,1]^d}
\Psi\Big(\frac{f(x)-f(y)}{\varphi\big(\frac{x-y}{\sqrt d}\big)} \Big) \dd x\, \dd y
= d^{q/2} \iint_{[0,1]^d\times [0,1]^d} \frac{|f(x)-f(y)|^p}{|x-y|^{pq}}  \dd x\, \dd y .
$$
Suppose now that $(f_N)_{N\in\N}$ are \emph{random} continuous function
on $[0,1]^d$ such that
\begin{gather*}
\E[|f_N(x)-f_N(y)|^p] \leq C|x-y|^\eta,
\end{gather*}
for some $C, p, q, \eta \in(0,\infty)$ with $pq > 2d$ and $\eta > pq-d$,
uniformly in $N\in\N$, $x,y \in [0,1]^d$.
Then $\E[B(f_N)]$ is bounded uniformly in $N$, hence $\{B(f_N)\}_{N\in\N}$ is tight.
If the functions $f_N$ are equibounded at some point
(e.g.\ $f_N(0) = 1$ for every $N\in\N$),
the tightness of $B(f_N)$ entails the tightness of $\{f_N\}_{N\in\N}$,
by the Arzel\`a-Ascoli theorem
\cite[Theorem 7.3]{B99}.

To prove the tightness of
$\{(Z^{\omega,\rm c}_{\beta_N,h_N}(sN, tN))_{0\leq s\leq t\leq 1}\}_{N\in\N}$,
it then suffices to show that
\begin{equation}\label{gamom1}
\bbE\Big[\big|Z^{\omega, \rm c}_{\beta_N, h_N}(s_1N, t_1N) - Z^{\omega, \rm c}_{\beta_N, h_N}
(s_2N, t_2N)\big|^p \Big] \leq C \big(\sqrt{(s_1-s_2)^2+(t_1-t_2)^2}\,\big)^\eta,
\end{equation}
which by triangle inequality, translation invariance and symmetry can be reduced to
\begin{equation}\label{gamom2}
\exists C > 0, \ p \ge 1, \ \eta > 2: \qquad
\bbE\Big[\big|Z^{\omega, \rm c}_{\beta_N, h_N}(0, tN) - Z^{\omega, \rm c}_{\beta_N, h_N}(0, sN)
\big|^p \Big] \leq C |t-s|^\eta ,
\end{equation}
uniformly in $N\in\N$ and $0\leq s<t\leq 1$.
(Conditions $pq>2d$ and $\eta > pq-d$
are then fulfilled by any $q \in (\frac{4}{p}, \frac{2+\eta}{p})$, since $d=2$).
Since $Z^{\omega, \rm c}_{\beta_N, h_N}(0, \cdot)$ is defined on $[0,\infty)$ via linear interpolation, it suffices to prove \eqref{gamom2} for $s,t$ with $sN, tN\in\{0\}\cup\N$.
\medskip

\noindent
{\bf Step 2. Polynomial chaos expansion.}
To simplify notation, let us denote
$$
\Psi_{N,r} := Z^{\omega, \rm c}_{\beta_N, h_N}(0, r) =
\e\Bigg[ \prod_{n=1}^{r-1} e^{(\beta_N \omega_n - \Lambda(\beta_N) + h_N) \ind_{\{n\in \tau\}}} \, \Bigg|\, r\in\tau \Bigg] \qquad \mbox{for } r\in \N,
$$
and $\Psi_{N,0}:=1$. Since $e^{x \ind_{\{n\in \tau\}}} =
1 + (e^x-1) \ind_{\{n\in \tau\}}$ for all $x\in\R$, we set
\begin{equation}\label{eq:xi}
	\xi_{N,i}:= e^{\beta_N\omega_i -\Lambda(\beta_N) +h_N}-1,
\end{equation}
and rewrite $\Psi_{N,r}$ as a \emph{polynomial chaos expansion}:
\begin{equation}\label{PsiNr}
\Psi_{N,r} = \e\Bigg[ \prod_{i=1}^{r-1} (1+\xi_{N,i} \ind_{\{i\in\tau\}})
\,\Bigg|\, r\in \tau \Bigg] = \sum_{I\subset \{1,\ldots, r-1\}} \p(I\subset \tau | r\in\tau)
\prod_{i\in I}\xi_{N,i},
\end{equation}
using the notation $\{I\subset \tau\} := \bigcap_{i\in I} \{i\in\tau\}$.

Recalling \eqref{eq:scalingbetah>1} and \eqref{eq:omegacond}, it is easy to check that
\begin{equation}\label{eq:xiN}
\begin{aligned}
\bbE[\xi_{N,i}] & = e^{h_N}-1 = h_N + O(h_N^2),  \\
\sqrt{\bbvar(\xi_{N,i})} & =\sqrt{e^{2h_N}\big(e^{\Lambda(2\beta_N)-2\Lambda(\beta_N)}-1\big)}
= \sqrt{\beta_N^2 +O(\beta_N^3)} = \beta_N+ O(\beta_N^2),
\end{aligned}
\end{equation}
where we used the fact that $h_N = o(\beta_N)$ and we
Taylor expanded $\Lambda(t) := \log \bbE[e^{t\omega_1}]$, noting
that $\Lambda(0)= \Lambda'(0)= 0$ and $\Lambda''(0)= 1$.
Thus $h_N$ and $\beta_N$ are approximately the mean and standard deviation
of $\xi_{N,i}$.
Let us rewrite $\Psi_{N,r}$ in \eqref{PsiNr} using
normalized variables $\zeta_{N,i}$:
\begin{equation}\label{PsiNr2}
\Psi_{N,r} =  \sum_{I\subset \{1,\ldots, r-1\}} \psi_{N,r}(I) \prod_{i\in I}\zeta_{N,i},
\qquad \text{where} \qquad
\zeta_{N,i} := \frac{1}{\beta_N} \xi_{N,i} \,,
\end{equation}
where $\psi_{N,r}(\emptyset):=1$ and for $I=\{n_1 <n_2<\cdots <n_k\}\subset\N$,
recalling \eqref{un}, we can write
\begin{equation}\label{psiNrI}
\psi_{N,r}(I) = \psi_{N,r}(n_1,\ldots, n_k)
:= \beta_N^{|I|} \p(I\subset \tau | r\in\tau)
=  (\beta_N)^k \frac{1}{u(r)} \prod_{i=1}^{k+1} u(n_i - n_{i-1}),
\end{equation}
with $n_0 := 0$, $n_{k+1} := r$.

To prove \eqref{gamom2}, we write $Z^{\omega, \rm c}_{\beta_N, h_N}(0, sN)=\Psi_{N,q}$
and $Z^{\omega, \rm c}_{\beta_N, h_N}(0, tN)=\Psi_{N,r}$,
with $q := sN$ and $r := tN$,
so that $0\leq q<r\leq N$. For a given
\emph{truncation level} $m = m(q,r,N) \in (0, q)$, that we will later choose as
\begin{equation}\label{mchoice}
m = m(q,r,N) := \begin{cases}
0 &  \text{if } q\leq \sqrt{N(r-q)} \\
q-\sqrt{N(r-q)} &  \text{otherwise}
\end{cases} ,
\end{equation}
so that $0\leq m<q<r\leq N$, we write
$$
\Psi_{N,r} - \Psi_{N,q} = \Xi_1 +\Xi_2 -\Xi_3
$$
with
\begin{equation}\label{Xi}
\begin{gathered}
\Xi_1 \,=\, \sum_{I\subset \{1,\ldots, m\}}\big(\psi_{N,r}(I)-\psi_{N,q}(I)\big)
\prod_{i\in I} \zeta_{N,i}, \\
\Xi_2 \,=\, \sum_{I\subset \{1,\ldots, r-1\}\atop I\cap \{m+1, \ldots, r-1\}\neq \emptyset}
\psi_{N,r}(I)\prod_{i\in I} \zeta_{N,i},
\qquad \mbox{and} \qquad
\Xi_3 \,=\, \sum_{I\subset \{1,\ldots, q-1\}\atop I\cap \{m+1, \ldots, q-1\}\neq \emptyset}
\psi_{N,q}(I)\prod_{i\in I} \zeta_{N,i}.
\end{gathered}
\end{equation}
To establish \eqref{gamom2} and hence tightness, it suffices to show that for each $i=1,2,3$,
\begin{equation}\label{Ximom}
\exists C > 0, \ p \ge 1, \ \eta > 2: \qquad
\bbE[|\Xi_i|^p] \leq C \Big(\frac{r-q}{N}\Big)^\eta \qquad \forall
N\in\N, \ 0\leq q<r\leq N.
\end{equation}

\noindent
{\bf Step 3. Change of measure.}
We now estimate the moments of $\xi_{N,i}$
defined in \eqref{eq:xi}.
Since $(a+b)^{2k} \le 2^{2k-1}(a^{2k} + b^{2k})$, for all $k\in\N$,
and $h_N = O(\beta_N^2)$ by \eqref{eq:scalingbetah>1}, we can write
\begin{equation}\label{eq:2kbound}
\begin{split}
\bbE[\xi_{N,i}^{2k}] &\leq\, 2^{2k-1}e^{2k(h_N-\Lambda(\beta_N))}
\bbE\big[\big(e^{\beta_N\omega_i}-1\big)^{2k}\big] + 2^{2k-1}(e^{-\Lambda(\beta_N)+h_N}-1)^{2k} \\
&\leq\, C(k) \, \beta_N^{2k}  \, \bbE\Big[\Big(\frac{1}{\beta_N} \int_0^{\beta_N} \omega_i
e^{t\omega_i}\dd t\Big)^{2k}\Big] + O(\beta_N^{4k}+h_N^{2k}) \\
&\leq\, C(k)\beta_N^{2k-1} \int_0^{\beta_N} \bbE[\omega_i^{2k}e^{2k t\omega_i}]\dd t
+ o(\beta_N^{2k})\, =\,  O(\beta_N^{2k}),
\end{split}
\end{equation}
because $\bbE[\omega_i^{2k} e^{2kt\omega_i}]$ is uniformly bounded for $
t\in [0, t_0/4k]$ by our assumption \eqref{eq:omegacond}.

Recalling \eqref{eq:xiN}, \eqref{PsiNr2} and \eqref{eq:scalingbetah>1}, the random variables
$(\zeta_{N,i})_{i\in\N}$ are i.i.d.\ with
\begin{equation}\label{zetaNimom}
\bbE[\zeta_{N,i}] \,\underset{N\to\infty}{\sim}\,
\frac{\hh}{\hbeta} \frac{1}{\sqrt{N}} \,, \qquad
\bbvar[\zeta_{N,i}] \,\underset{N\to\infty}{\sim}\, 1,
\qquad \sup_{N,i\in\N}\bbE[(\zeta_{N,i})^{2k}] < \infty.
\end{equation}
It follows, in particular, that $\{\zeta_{N,i}^2\}_{i,N\in\N}$ are uniformly integrable.
We can then apply a change of measure result established in~\cite[Lemma B.1]{CSZ13},
which asserts that
we can construct i.i.d.\ random variables $(\tilde \zeta_{N,i})_{i\in\N}$
with marginal
distribution $\bbP(\tilde \zeta_{N,i}\in \dd x) = f_N(x) \bbP(\zeta_{N,i}\in \dd x)$,
for which there exists $C > 0$ such that
for all $p\in\R$ and $i, N\in\N$
\begin{equation}\label{tildezetamom}
\bbE[\tilde\zeta_{N,i}]=0, \qquad \bbE[\tilde \zeta_{N,i}^2] \leq 1 + C/\sqrt{N},
\qquad \text{and} \qquad \bbE[f_N(\zeta_{N,i})^p] \leq 1+ C/N.
\end{equation}
Let $\tilde \Xi_i$ be the analogue of $\Xi_i$ constructed from the $\tilde\zeta_{N,i}$'s
instead of the $\zeta_{N,i}$'s. By H\"older,
$$
\begin{aligned}
\bbE\big[|\Xi_i|^{l-1}\big] & = \bbE\Big[|\Xi_i|^{l-1} \prod_{i=1}^N f_N(\zeta_{N,i})^{\frac{l-1}{l}} \prod_{i=1}^N f_N(\zeta_{N,i})^{-\frac{l-1}{l}}\Big] \\
& \leq \bbE\big[|\tilde\Xi_i|^{l}\big]^{\frac{l-1}{l}} \bbE\big[f_N(\zeta_{N,1})^{1-l}\big]^{\frac{N}{l}}  \leq e^{\frac{C}{l}} \bbE\big[|\tilde\Xi_i|^{l}\big]^{\frac{l-1}{l}}.
\end{aligned}
$$
Relation \eqref{Ximom},
and hence the tightness of $\{Z^{\omega, \rm c}_{\beta_N, h_N}(\cdot, \cdot)\}_{N\in\N}$,
is thus reduced to showing
\begin{equation}\label{Ximom2}
\bbE\big[|\tilde\Xi_i|^{l}\big] \leq C \Big(\frac{r-q}{N}\Big)^\eta \qquad
\text{ for all  } N\in\N \text{ and } 0\leq q<r\leq N,
\end{equation}
for some $l\in\N$, $l\ge 2$ and $\eta>0$ satisfying $\eta > 2 \frac{l}{l-1}$.

\medskip
\noindent
{\bf Step 4. Bounding $\bbE[|\tilde\Xi_2|^{l}]$.}
We note that the bound for $\bbE[|\tilde\Xi_3|^{l}]$ is exactly the same
as that for $\bbE[|\tilde\Xi_2|^{l}]$, and hence will be omitted. First we write $\tilde \Xi_2$ as
\begin{equation}\label{tildeXi2(k)exp}
\tilde \Xi_2 = \sum_{k=1}^{r-1} \tilde \Xi_2^{(k)} , \qquad
\text{where} \qquad
\tilde \Xi_2^{(k)} :=
\sum_{|I|=k, I\subset \{1,\ldots, r-1\}\atop I\cap \{m+1, \ldots, r-1\}\neq \emptyset}
\psi_{N,r}(I)\prod_{i\in I} \tilde \zeta_{N,i},
\end{equation}
with $\tilde \Xi_2^{(k)}$ consisting of all terms of degree $k$.
The hypercontractivity established in \cite[Prop.~3.16 \& 3.12]{MOO10}
allows to estimates moments of order $l$ in terms
of moments of order $2$: more precisely, setting $\|X\|_p :=
\E[|X|^p]^{1/p}$, we have for all $l\geq 2$
\begin{equation}\label{tildeXi2(k)}
\Vert \tilde \Xi_2\Vert_l^l := \bbE[|\tilde\Xi_2|^{l}] \leq \Big(\sum_{k=1}^{r-1}
\Vert\tilde \Xi_2^{(k)}\Vert_l\Big)^l \leq \Big(\sum_{k=1}^{r-1} (c_l)^k
\Vert\tilde \Xi_2^{(k)}\Vert_2\Big)^l,
\end{equation}
where $c_l := 2\sqrt{l-1} \max_{N\in\N}\Big(
\frac{\Vert \tilde\zeta_{N,1}\Vert_l}{\Vert \tilde\zeta_{N,1}\Vert_2}\Big)$
is finite and depends only on $l$, by \eqref{zetaNimom}.

We now turn to the estimation of $\Vert\tilde \Xi_2^{(k)}\Vert_2$.
Let us recall the definition of $\psi_{N,r}$ in \eqref{psiNrI}.
It follows by
\eqref{tildezetamom} that $\bbvar(\tilde \zeta_{N,1})\leq 1+C/\sqrt{N}\leq 2$ for all $N$ large.
We then have
\begin{align}
\nonumber
& \Vert \tilde \Xi_2^{(k)}\Vert_2^2 = \bbE\big[\big(\tilde \Xi_2^{(k)}\big)^2\big] =
\sum_{y=0}^{k-1} \sum_{1\leq n_1<\cdots < n_y\leq  m \atop m+1\leq n_{y+1}<\cdots <n_k\leq r-1}
\psi^2_{N,r}(n_1,\ldots, n_k) \bbvar(\tilde\zeta_{N,1})^k \\
\nonumber
 & \leq
2^k \sum_{y=0}^{k-1} \sum_{1\leq n_1<\cdots <n_y\leq m \atop m+1\leq n_{y+1}<\cdots <n_k\leq r-1}
\frac{\beta_N^{2k}\, u(n_1)^2 u(n_2-n_1)^2\cdots u(r-n_k)^2}{u(r)^2}  \\
\label{tildeXi2cut}
& \le 4^k
\sum_{y=0}^{k-1} \!\idotsint\limits_{0<t_1<\cdots<t_y<\frac{m}{N} \atop
\frac{m}{N}<t_{y+1}<\cdots<t_{k}<\frac{r}{N}} \!\!
\frac{(\sqrt{N}\beta_N u(\lceil N t_1\rceil))^2 \cdots
(\sqrt{N}\beta_N u(r-\lceil N t_k\rceil))^2}{(\sqrt{N}\beta_N u(r))^2}\ \dd t_1\cdots \dd t_{k} \,.
\end{align}
It remains to estimate this integral, when $\frac{1}{2}<\alpha<1$ (the case $\alpha > 1$ is easy).
By \eqref{un}
\begin{equation*}
	\frac{1}{c} \frac{1}{L(\ell+1) (\ell+1)^{1-\alpha}}
	\le u(\ell) \le c \, \frac{1}{L(\ell+1) (\ell+1)^{1-\alpha}} \qquad
	\forall \ell\in\N \,,
\end{equation*}
for some $c \in (0,\infty)$. Since
$\lceil Nt\rceil-\lceil Ns\rceil + 1 \ge N(t-s)$, recalling \eqref{eq:scalingbetah>1} we obtain
\begin{equation*}
\sqrt{N}\beta_N u\big(\lceil Nt\rceil-\lceil Ns\rceil\big) \leq
c \frac{L(N)}{L\big(\lceil Nt\rceil-\lceil Ns\rceil+1\big)}
\frac{1}{(t - s)^{1-\alpha}} \,.
\end{equation*}
Let us now fix
\begin{equation}\label{alpha'}
\alpha':=
\begin{cases}
1 & \text{when  } \alpha>1 \\
\text{any number in } \big(\frac{1}{2}, \alpha\big) & \text{when } \frac{1}{2}<\alpha<1
\end{cases} .
\end{equation}
Since $L(\cdot)$ is slowly varying,
by Potter bounds \cite[Theorem 1.5.6]{BGT87}
for every $\epsilon > 0$ there is $D_\epsilon \in (0,\infty)$ such that
$L(a)/L(b) \le D_\epsilon \max\{ (a/b)^{\epsilon}, (b/a)^{\epsilon}\}$
for all $a,b\in\N$. It follows that
\begin{equation}\label{uNuniform}
\sqrt{N}\beta_N u\big(\lceil Nt\rceil-\lceil Ns\rceil\big) \leq
C \frac{1}{(t-s)^{1-\alpha'}},
\end{equation}
for some $C \in (0,\infty)$, uniformly in $0 < s<t\leq 1$ and $N\in\N$.
Analogously, again
by \eqref{un} and Potter bounds, if $0 \le s < t < \frac{r}{N}$ we have
\begin{equation}\label{uNuniform2}
\begin{split}
	& \frac{u(\lceil N t\rceil-\lceil N s\rceil)}{u(r)}
	\leq c^2 \frac{L(r+1)}{L(\lceil N t\rceil-\lceil N s\rceil + 1)}
	 \frac{r^{1-\alpha}}{(N(t - s))^{1-\alpha}}
	\le C \frac{(r/N)^{1-\alpha'}}{(t-s)^{1-\alpha'}} .
\end{split}
\end{equation}
Plugging \eqref{uNuniform} and \eqref{uNuniform2} into \eqref{tildeXi2cut},
and applying Lemma~\ref{L:integralbd}, then gives
\begin{align}
\nonumber
\Vert \tilde \Xi_2^{(k)}\Vert_2^2
&\leq C^k \sum_{y=0}^{k-1} \idotsint\limits_{0<t_1<\cdots<t_y<\frac{m}{N} \atop
\frac{m}{N}<t_{y+1}<\cdots<t_{k}<\frac{r}{N}}\frac{(r/N)^{2(1-\alpha')}}{t_1^{2(1-\alpha')}
(t_2- t_1)^{2(1-\alpha')}\cdots (r/N-t_k)^{2(1-\alpha')}}  \dd t_1\cdots \dd t_{k} \\
\nonumber
&\leq  C^k \sum_{y=0}^{k-1} C_1 e^{-C_2 k\log k} \Big(\frac{m}{N}\Big)^{(2\alpha'-1) y}
\Big(\frac{r-m}{N}\Big)^{(2\alpha'-1)(k-y)} \\
\label{tildeXi2}
&\leq  C_3 e^{-C_4k\log k} \Big(\frac{r-m}{N}\Big)^{2\alpha'-1},
\end{align}
where the last inequality follows by crude estimates
(observe that $m/N \le 1$).

We can substitute the bound \eqref{tildeXi2} into \eqref{tildeXi2(k)} to obtain
\begin{equation}\label{tildeXi2.2}
\bbE[|\tilde\Xi_2|^{l}] \leq \Big(\sum_{k=1}^{r}
(c_l)^k  (C_3)^{\frac{1}{2}} e^{-\frac{C_4}{2}k\log k}
\Big(\frac{r-m}{N}\Big)^{\alpha'-\frac{1}{2}} \Big)^l \leq
C \Big(\frac{r-m}{N}\Big)^{(\alpha'-\frac{1}{2}) l}
\end{equation}
for some $C$ depending only on $l$.
We now choose $m$ as in \eqref{mchoice}, so that
$$
\frac{r-m}{N}\leq \frac{r-q + \sqrt{N(r-q)}}{N}\leq 2\Big( \frac{r-q}{N}\Big)^{\frac{1}{2}},
$$
(if $m=0$ we first write $r=r-q+q$ and we use that in this case $q\leq \sqrt{N(r-q)}$),
hence
\begin{equation}\label{tildeXi2.3}
\bbE[|\tilde \Xi_2|^l] \leq C\, 2^{(\alpha'-\frac{1}{2}) l}
\Big(\frac{r-q}{N}\Big)^{(\alpha'-\frac{1}{2}) \frac{l}{2}}.
\end{equation}
Since $\alpha'>\frac{1}{2}$ by our choice in \eqref{alpha'}, relation
\eqref{Ximom2} is satisfied
with $\eta = (\alpha'-\frac{1}{2}) \frac{l}{2}$ (and
one has $\eta > 2 \frac{l}{l-1}$, as required, provided $l\in\N$ is chosen large enough).
\medskip

\noindent
{\bf Step 5: bounding $\bbE[|\tilde\Xi_1|^{l}]$.} Following the same steps as the bound for
$\bbE[|\tilde\Xi_2|^{l}]$, it suffices to establish an analogue of \eqref{tildeXi2} for
\begin{equation}\label{tildeXi1(k)}
\tilde \Xi_1^{(k)} := \sum_{1\leq n_1<\cdots < n_k\leq m} \big(\psi_{N,r}(n_1,\ldots, n_k)
-\psi_{N,q}(n_1,\ldots, n_k)\big) \prod_{i=1}^k \tilde \zeta_{N,n_i},
\end{equation}
where we recall that $0\leq m<q<r\leq N$,
because $m = m(q,r,N)$ is chosen as in \eqref{mchoice}.
If $m=0$ then $\tilde \Xi_1^{(k)} = 0$ and there is nothing to prove,
hence we assume $m>0$ henceforth.

Since $(\tilde \zeta_{N,i})_{i\in\N}$ are i.i.d.\ with
$\bbE[\tilde\zeta_{N,1}]=0$  and $\bbvar(\tilde \zeta_{N,1})\leq 1+C/\sqrt{N}\leq 2$
for $N$ large,
\begin{equation}
\Vert \tilde \Xi_1^{(k)}\Vert_2^2 = \bbE\big[\big(\tilde \Xi_1^{(k)}\big)^2\big] \leq
2^k \sum_{1\leq n_1<\cdots < n_k\leq m} \big(\psi_{N,r}(n_1,\ldots, n_k)-\psi_{N,q}
(n_1,\ldots, n_k)\big)^2 . \label{tildeXi1}
\end{equation}
Let $\epsilon$ be as in condition \eqref{renewalbd1}.
We first consider the case $r-q\geq \epsilon^2 r$, for which we bound
\begin{align}
	\label{tildeXi2.10}
	\Vert \tilde \Xi_1^{(k)}\Vert_2^2 & \leq  2^{k+1} \sum_{1\leq n_1<\cdots < n_k\leq m}
	\big(\psi_{N,r}(n_1,\ldots, n_k)^2 + \psi_{N,q}(n_1,\ldots, n_k)^2\big) \\
	& \leq 2^{k+1} \sum_{1\leq n_1<\cdots < n_k\leq r-1} \psi_{N,r}(n_1,\ldots, n_k)^2 +
	2^{k+1}\sum_{1\leq n_1<\cdots < n_k\leq q-1} \psi_{N,q}(n_1,\ldots, n_k)^2 \,.\nonumber
\end{align}
Applying the bound \eqref{tildeXi2} with $m=0$, since $q < r$, we obtain
\begin{equation}\label{tildeXi2.1}
	\Vert \tilde \Xi_1^{(k)}\Vert_2^2
	\leq 2^{k+1} C_3e^{-C_4k\log k}  \Big(\Big(\frac{r}{N}\Big)^{2\alpha'-1} +
	\Big(\frac{q}{N}\Big)^{2\alpha'-1}\Big) \leq C_5e^{-C_6 k\log k}
	\Big(\frac{r}{N}\Big)^{2\alpha'-1},
\end{equation}
By the same calculation as in \eqref{tildeXi2.2}, we then have, using $r-q\geq \epsilon^2 r$,
$$
\bbE[|\tilde\Xi_1|^{l}] \leq  C \Big(\frac{r}{N}\Big)^{(\alpha'-\frac{1}{2}) l}
\leq \frac{C}{\epsilon^{(2\alpha' - 1) l}}
\Big(\frac{r-q}{N}\Big)^{(\alpha'-\frac{1}{2}) l},
$$
which gives the desired bound \eqref{Ximom2}.

Now we consider the case $r-q\leq \epsilon^2 r$. Denote $I:=\{n_1, \ldots, n_k\}$.  Recalling the
definition of $\psi_{N,r}$ in \eqref{psiNrI}, we have
\begin{equation} \label{psiRogozin}
\big(\psi_{N,r}(I)-\psi_{N,q}(I)\big)^2 =
(\beta_N)^{2k} u(n_1)^2\cdots u(n_k-n_{k-1})^2
\bigg(\frac{u(r-n_k)}{u(r)}
- \frac{u(q-n_k)}{u(q)}\bigg)^2.
\end{equation}
Since we assume $m > 0$, by \eqref{mchoice} we have
 $m=q-\sqrt{N(r-q)}$ and $q, r\geq m+\sqrt{N}$.
Recalling that $u(n)=\p(n\in\tau)$, we can bound the last factor in \eqref{psiRogozin} as follows:
\begin{align*}
& \Big|\frac{\p(r-n_k\in\tau)}{\p(r\in\tau)} - \frac{\p(q-n_k\in\tau)}{\p(q\in\tau)}\Big|
= \Big|\frac{\p(q\in\tau) \p(r-n_k\in\tau) - \p(r\in\tau)\p(q-n_k\in\tau)}
{\p(q\in\tau)\p(r\in\tau)}\Big|\\
&= \bigg|\frac{\big[\p(q\in\tau)-\p(r\in\tau)\big]\p(r-n_k\in\tau) + \p(r\in\tau)
\big[\p(r-n_k\in\tau)-\p(q-n_k\in\tau)\big]}{\p(q\in\tau)\p(r\in\tau)}\bigg| \\
&\leq \frac{\big|\p(q\in\tau)-\p(r\in\tau)\big|\p(r-n_k\in\tau)}{\p(q\in\tau)\p(r\in\tau)}
+ \frac{\big|\p(q-n_k\in\tau)-\p(r-n_k\in\tau)\big|\p(q-n_k\in\tau)}
{\p(q-n_k\in\tau)\p(q\in\tau)}.
\end{align*}
We now apply \eqref{renewalbd1}, using the assumption
$r-q \leq \epsilon^2 r<\epsilon r$ and noting that
$$
\frac{(r-n_k) - (q-n_k)}{r-n_k} \leq \frac{r-q}{q-m} = \sqrt{\frac{r-q}{N}}
\leq \sqrt{\frac{r-q}{r}} \leq \epsilon ,
$$
which yields
\begin{align*}
	\bigg|\frac{u(r-n_k)}{u(r)}
	- \frac{u(q-n_k)}{u(q)}\bigg|
	&\leq C \Big(\frac{r-q}{r}\Big)^\delta \, \frac{u(r-n_k)}{u(r)}
	+ C \Big(\frac{r-q}{r-n_k}\Big)^\delta \, \frac{u(q-n_k)}{u(q)} \\
	&\leq C \Big(\frac{r-q}{r}\Big)^\delta \frac{u(r-n_k)}{u(r)}
	+ C \Big(\frac{r-q}{r}\Big)^{\delta/2} \frac{u(q-n_k)}{u(q)} .
\end{align*}
Plugging this into \eqref{psiRogozin} and recalling \eqref{psiNrI} then gives
$$
\begin{aligned}
\big(\psi_{N,r}(I)-\psi_{N,q}(I)\big)^2 & \leq 2C^2 \Big(\frac{r-q}{r}\Big)^{2\delta}
\psi_{N,r}(I)^2 + 2C^2 \Big(\frac{r-q}{r}\Big)^{\delta} \psi_{N,q}(I)^2 \\
& \leq 2C^2 \Big(\frac{r-q}{r}\Big)^{\delta} (\psi_{N,r}(I)^2+ \psi_{N,q}(I)^2).
\end{aligned}
$$
We can finally substitute this bound back into \eqref{tildeXi1} and follow the same calculations as
in \eqref{tildeXi2.10}-\eqref{tildeXi2.1},
with an extra factor $(\frac{r-q}{r})^{\delta}$, to obtain
$$
\Vert \tilde \Xi_1^{(k)}\Vert_2^2 \leq C_7 \, e^{-C_6 k\log k}
\Big(\frac{r-q}{r}\Big)^{\delta} \Big(\frac{r}{N}\Big)^{2\alpha'-1}
\leq C_7 e^{-C_6 k\log k} \Big(\frac{r-q}{N}\Big)^{\delta \wedge (2\alpha'-1)}.
$$
By the same calculation as in \eqref{tildeXi2.2}, we then have
$$
\bbE[|\tilde\Xi_1|^{l}] \leq  C \Big(\frac{r-q}{N}\Big)^{\frac{\delta \wedge (2\alpha'-1)}{2} l}.
$$
Since $\delta>0$ and $\alpha'>1/2$, this gives the desired bound
\eqref{Ximom2} for $\bbE[|\tilde\Xi_1|^{l}]$, provided $l\in\N$ is chosen large enough.
This completes the proof.\qed

\subsection{Proof of Theorem~\ref{T:bZprop}}
\label{S:proofbZprop}

We fix $\alpha\in (1/2, 1)$ and $T \in (0,\infty)$.
By Remark~\ref{R:Ken} and Lemma~\ref{lem:Bessel1} in the appendix,
we can construct a renewal process $\tau$ satisfying \eqref{eq:ass}, with $L(n)\to 1$
as $n\to\infty$, such that condition \eqref{renewalbd1} is satisfied.
By Theorem~\ref{T:cconv2}, for this particular renewal process,
the discrete partition functions
$\big(Z^{\omega,\rm c}_{\beta_N,h_N}(sN, tN)\big)_{0\leq s\leq t\leq T}$
converge in distribution as $N\to\infty$ to the continuum family
$\big(\bZ^{\alpha; W, \rm c}_{\hat\beta, \hat h}(s, t)\big)_{0\leq s\leq t\leq T}$,
viewed as random variables
in $C([0,T]^2_\le, \R)$. By Skorohod's representation theorem~\cite[Thm.~6.7]{B99}, we can couple
$\big(Z^{\omega,\rm c}_{\beta_N,h_N}\big)_{N\in\N}$
and $\bZ^{\alpha; W, \rm c}_{\hat\beta, \hat h}$
so that, a.s., $Z^{\omega,\rm c}_{\beta_N,h_N}(sN, tN)$ converges to
$\bZ^{\alpha; W, \rm c}_{\hat\beta, \hat h}(s, t)$ uniformly on $[0,T]^2_\le$.
We assume such a coupling from now on.

\smallskip

Property \eqref{it:2} is readily checked from the
Wiener chaos representation \eqref{cpinWC}. Alternatively,
one can observe that similar properties hold for the disordered
pinning partition functions $\big(Z^{\omega,\rm c}_{\beta_N,h_N}(i, j)\big)_{0\leq i\leq j}$,
which are preserved in the scaling limit.

\smallskip

We next prove \eqref{it:4}, where we may assume $0\leq s<u<t\leq T$. Let us fix a typical
realization of $\big(Z^{\omega,\rm c}_{\beta_N,h_N}\big)_{N\in\N}$ and
$\bZ^{\alpha; W, \rm c}_{\hat\beta, \hat h}$ under the above coupling. Let $a_N:= \lfloor sN\rfloor$,
$b_N:=\lfloor uN\rfloor$ and $c_N:=\lfloor tN\rfloor$. Recalling the definition of
$Z^{\omega, \rm c}_{\beta, h}$ in \eqref{eq:Zpinc}
and summing on the index $k\in\N$ for which
$\tau_k < b_N \le \tau_{k+1}$ and on the values
$i = \tau_k$, $j = \tau_{k+1}$, we obtain
\begin{equation}\label{Zdecomp}
\begin{aligned}
& Z^{\omega, \rm c}_{\beta_N, h_N}(a_N,c_N) \P(c_N-a_N\in \tau)  \\
=\ & \sum_{a_N\leq i<b_N} \sum_{b_N\leq j\leq c_N} Z^{\omega, \rm c}_{\beta_N, h_N}(a_N,i)
Z^{\omega, \rm c}_{\beta_N, h_N}(j,c_N)
  \ e^{(\beta_N \omega_i -\Lambda(\beta_N)+h_N)\ind_{\{i>a_N\}}}   \\
& \qquad \times \P(i-a_N\in \tau) \P(\tau_1=j-i) \P(c_N-j\in \tau) \
e^{(\beta_N \omega_j -\Lambda(\beta_N)+h_N)\ind_{\{j<c_N\}}}.
\end{aligned}
\end{equation}
Multiply both sides of \eqref{Zdecomp} by $N^{1-\alpha}$ and let $N\to\infty$. Since
$\P(n\in\tau)\sim \frac{C_\alpha}{n^{1-\alpha}}$ by \eqref{un},
$$
Z^{\omega, \rm c}_{\beta_N, h_N}(a_N,c_N)  N^{1-\alpha} \P(c_N-a_N\in \tau) \asto{N}
\frac{C_\alpha \, \bZ^{\alpha; W, \rm c}_{\hat \beta, \hat h}(s,t)}{(t-s)^{1-\alpha}} .
$$
For the RHS of \eqref{Zdecomp},
note that $(e^{(\beta_N \omega_i -\Lambda(\beta_N)+h_N)})_{0\leq i\leq TN}$
converge uniformly to $1$ as $N\to\infty$ (because
$\max\{ \omega_i: \, i \le TN\} = O(\log N)$ by Borel-Cantelli estimates,
cf.\ \eqref{eq:omegacond}).
Moreover, for $i=\lfloor xN\rfloor$
and $j=\lfloor yN\rfloor$, with $s<x<u<y<t$,
$$
Z^{\omega, \rm c}_{\beta_N, h_N}(a_N,\lfloor xN\rfloor)
Z^{\omega, \rm c}_{\beta_N, h_N}(\lfloor yN\rfloor,c_N)
\asto{N} \bZ^{\alpha; W,\rm c}_{\hat \beta, \hat h}(s,x)
\bZ^{\alpha; W,\rm c}_{\hat \beta, \hat h}(y,t) \qquad \mbox{uniformly},
$$
while by $\P(\tau_1=n) =
\frac{L(n)}{n^{1+\alpha}} $
and $\P(n\in\tau)\sim \frac{C_\alpha}{n^{1-\alpha}}$, cf. \eqref{eq:ass} and \eqref{un}, we get
\begin{equation}\label{tripuconv}
\begin{aligned}
& N^2 N^{1-\alpha} \P(\lfloor xN\rfloor-a_N\in \tau)
\P(\tau_1=\lfloor yN\rfloor-\lfloor xN\rfloor) \P(c_N-\lfloor yN\rfloor\in \tau) \\
& \qquad \asto{N} \ \frac{C_\alpha^2}{(x-s)^{1-\alpha} (y-x)^{1+\alpha} (t-y)^{1-\alpha}},
\end{aligned}
\end{equation}
for all $s < x < u < y < t$ (the convergence is even
uniform for $x-s, y-x, t-y\geq \epsilon$, for any $\epsilon >0$).
Again by \eqref{eq:ass} and \eqref{un}
with $L(n) \sim 1$, the LHS of \eqref{tripuconv}
is uniformly bounded by a constant multiple of the RHS, which is integrable
over $x\in (s,u)$ and $y\in (u,t)$.
Therefore, by a Riemann sum approximation, the RHS
of \eqref{Zdecomp}, multiplied by $N^{1-\alpha}$, converges to
$$
\int_{x\in (s,u)} \int_{y\in (u,t)}
\frac{C_\alpha\,\bZ^{\alpha; W, c}_{\hat\beta, \hat h}(s,x)}{(x-s)^{1-\alpha}} \,
\frac{1}{(y-x)^{1+\alpha}} \, \frac{C_\alpha\,
\bZ^{\alpha; W, c}_{\hat\beta, \hat h}(y,t)}{(t-y)^{1-\alpha}}\,
\dd x \, \dd y \,,
$$
which establishes \eqref{bUdecomp}.

\smallskip

We then turn to \eqref{it:1}, where we may restrict $ s,t \in [0,T]$.
The fact that
$\bZ^{\alpha; W, \rm c}_{\hat \beta, \hat h}(\cdot, \cdot)$
is a.s. continuous and non-negative follows readily from Theorem~\ref{T:cconv2}
(recall that $Z^{\omega,\rm c}_{\beta_N,h_N}(i, j) \ge 0$, cf. \eqref{eq:Zpinc}).
For the a.s.\ strict positivity,
we apply \eqref{bUdecomp} with $u := (s+t)/2$:
since $\bZ^{\alpha; W, \rm c}_{\hat \beta, \hat h} \ge 0$,
for any $\epsilon > 0$
restricting the integrals to $x \le s+\epsilon$
and $y \ge s-\epsilon$ yields the lower bound
\begin{equation} \label{eq:quaas}
\bZ^{\alpha; W, \rm c}_{\hat \beta, \hat h}(s,t) \geq
(t-s)^{1-\alpha} \!\!\!\!\!\!\!\!\!\!\!\!
\iint\limits_{x\in (s,(s+\epsilon)\wedge t), \; y\in ((t-\epsilon)\vee s,t)}
\frac{C_\alpha  \, \bZ^{\alpha; W, \rm c}_{\hat \beta, \hat h}(s,x)
\, \bZ^{\alpha; W, \rm c}_{\hat \beta, \hat h}(y,t)}
{(x-s)^{1-\alpha}(y-x)^{1+\alpha}(t-y)^{1-\alpha}}\, \dd x \, \dd y \, .
\end{equation}
Since $\bZ^{\alpha; W, \rm c}_{\hat \beta, \hat h}(u,u) =1$
for all $u \ge 0$, cf. \eqref{cpinWC},
by continuity a.s. there is (a random) $\epsilon>0$
such that $\bZ^{\alpha; W, \rm c}_{\hat \beta, \hat h}(u,v)>0$
for all $u,v \in [0,T]$ with $0 \le v-u \le \epsilon$. Observing that both
$s-x \le \epsilon$ and $y-t \le \epsilon$ in \eqref{eq:quaas} yields
that a.s. $\bZ^{\alpha; W, \rm c}_{\hat \beta, \hat h}(s,t) > 0$
for all $0 \le s \le t \le T$.

Lastly we prove \eqref{it:3}. For any $A>0$,
recalling \eqref{cpinWC} and setting
$\tilde W_t := A^{-1/2}W_{At}$, the change of variables
$t \mapsto u := t/A$ yields the equality in distribution
(jointly in $s,t$)
$$
\begin{aligned}
\bZ^{\alpha; W, \rm c}_{\hat\beta, \hat h}(As, At) & = 1 + \sum_{k=1}^\infty
\ \ \idotsint\limits_{As<t_1<\cdots <t_k<At} \bpsi^{\alpha; \rm c}_{As,At}(t_1,\ldots, t_k)
\prod_{i=1}^k  (\hat\beta\, \dd W_{t_i} + \hat h\, \dd t_i) \\
& \overset{\mathrm{dist.}}{=} 1 + \sum_{k=1}^\infty \ \ \idotsint\limits_{s<u_1<\cdots < u_k<t}
\bpsi^{\alpha; \rm c}_{As,At}(A u_1,\ldots, A u_k) \prod_{i=1}^k
(A^{1/2} \hat\beta\, \dd \tilde W_{u_i} + A\hat h\,\dd u_i) .
\end{aligned}
$$
Since $\bpsi^{\alpha; \rm c}_{As,At}(A u_1,\ldots, A u_k)
= A^{(\alpha-1) k} \bpsi^{\alpha; \rm c}_{s,t}(u_1,\ldots, u_k)$, by \eqref{cpinWCphi},
it follows that
\begin{equation*}
	\bZ^{\alpha; W, \rm c}_{\hat\beta, \hat h}(As, At)
	\overset{\mathrm{dist.}}{=}
	\bZ^{\alpha; \tilde W, \rm c}_{A^{\alpha-1/2}\hat\beta, A^\alpha\hat h}(s, t) \,.
\end{equation*}
Since $\tilde W = (\tilde W_t)_{t\ge 0}$ is still a standard Brownian motion, the proof is completed.
\qed

\section{Characterization and Universality of the CDPM}
\label{S:CDPMchar}

In this section we prove Theorems~\ref{th:main1} and~\ref{T:cconv3}.
We recall that $\cC$ is the space of all closed subsets of $\R$,
and refer to Appendix~\ref{A:RCS} for some key facts on $\cC$-valued random variables
(in particular for the notion of \emph{restricted f.d.d.}, cf.\ \S\ref{sec:restricted}).
Let us summarize our setting:
\begin{itemize}
\item we have two independent sources
of randomness: a renewal process $\tau = (\tau_n)_{n\ge 0}$
satisfying \eqref{eq:ass} and \eqref{eq:renass0+}, and
an i.i.d.\ sequence $\omega = (\omega_n)_{n\ge 1}$
satisfying \eqref{eq:omegacond};

\item we fix $T > 0$
and consider the \emph{conditioned pinning model} $\p^{\omega, \rm c}_{NT,\beta,h}$,
defined in \eqref{eq:condpm} and \eqref{eq:Gweight},
with the parameters $\beta = \beta_N$ and $h = h_N$
chosen as in \eqref{eq:scalingbetah}.
\end{itemize}
Let us denote by $X_N$ the rescaled set $\tau/N \cap [0,T]$,
cf.\ \eqref{eq:rescN}, under the law $\p^{\omega, \rm c}_{NT, \beta_N ,h_N}$.
If we \emph{fix} a realization of $\omega$, then $X_N$ is a
$\cC$-valued random variable (with respect to $\tau$).

\smallskip

Our strategy to prove Theorems~\ref{th:main1} and~\ref{T:cconv3} is based on two main steps:
\begin{enumerate}
\item\label{it:cou1}
first we define a suitable coupling of $\omega$ with a standard Brownian motion $W$;

\item\label{it:cou2}  then we show that, for $\bbP$-a.e.\ \emph{fixed}
realization of $(\omega, W)$,
the restricted f.d.d.\ of $X_N$ converge weakly as $N\to\infty$
to those given in the right hand side of \eqref{CDPMfdd}.

\end{enumerate}
We can then apply
Proposition~\ref{P:cK+char2}~\eqref{it:restiii}, which guarantees that the densities in
\eqref{CDPMfdd} are the restricted f.d.d.
of a $\cC$-valued random variable $X_\infty$, whose law on $\cC$
we denote by $\bP^{\alpha; W, \rm c}_{T, \hat \beta, \hat h}$;
furthermore, $X_N$ converges in distribution on $\cC$ toward $X_\infty$
as $N\to\infty$, for $\bbP$-a.e.\ \emph{fixed}
realization of $(\omega, W)$.
This is nothing but Theorem~\ref{th:main1} in a strengthened form,
with a.s. convergence instead of convergence in distribution
(thanks to the coupling).
Theorem~\ref{T:cconv3} is also proved, once we note that
$\bP^{\alpha; W, \rm c}_{T, \hat \beta, \hat h}$
is the \emph{unique} probability law on $\cC$ satisfying
conditions \eqref{it:1b} and \eqref{it:2b} therein,
because restricted f.d.d. characterize laws on $\cC$,
cf.\ Proposition~\ref{P:cK+char2}~\eqref{it:resti}.
It only remains to prove points \eqref{it:cou1} and \eqref{it:cou2}.

\medskip

By Theorem~\ref{T:cconv2}, the family
$Z_N := \big(Z^{\omega,\rm c}_{\beta_N,h_N}(sN, tN)\big)_{0\leq s\leq t\leq T}$
of discrete partition functions defined in \eqref{eq:Zpinc},
viewed as a $C([0,T]^2_\le, \R)$-valued random variable,
converges in distribution to the continuum family
$\bZ := \big(\bZ^{\alpha; W, \rm c}_{\hat\beta, \hat h}(s, t)\big)_{0\leq s\leq t\leq T}$
as $N\to\infty$.
Note that $Z_N$ is a function of
$\omega_{(0,N)} := (\omega_1, \ldots, \omega_{N-1})$,
while $\bZ$ is a function of a standard Brownian motion $W = (W_t)_{t\ge 0}$.
By an extension of Skorohod's representation
theorem~\cite[Cor.~5.12]{K97},
we can couple the discrete environments $(\omega_{(0,N)})_{N\in\N}$
and $W$ on the same probability space, so that $Z_N \to \bZ$ a.s..
This completes point \eqref{it:cou1}.

\medskip

Coming to point \eqref{it:cou2}, we prove the convergence of
the restricted f.d.d. of $X_N$ ,
i.e. the laws of the vectors $(\tg_{t_1}(X_N), \ldots, \tg_{t_k}(X_N))$
restricted on the event $A_{t_1,\ldots, t_k}^{X_N}$ defined in
\eqref{eq:restricted}.
Since $X_N = \tau/N \cap [0,T]$ under
the pinning law $\p_{TN,\beta_N,h_N}^{\omega, \rm c}$,
we fix $k \in \N$ and $0 < t_1 < \ldots < t_k < T$,
as well as a continuous and bounded function $F: \R^{2k} \to \R$,
and we have to show that
\begin{equation}\label{eq:toproJ}
\begin{split}
	I_N := \E_{TN,\beta_N,h_N}^{\omega, \rm c} \Big[ F\big(\tg_{t_1}(\tau/N), &
	\, \td_{t_1}(\tau/N), \ldots, \tg_{t_k}(\tau/N), \td_{t_k}(\tau/N)\big)
	\ind_{A_{t_1,\ldots, t_k}^{\tau/N}} \Big]
\end{split}
\end{equation}
converges as $N\to\infty$ to the integral of $F$
with respect to the density in \eqref{CDPMfdd}, i.e.
\begin{equation} \label{eq:intJ}
\begin{split}
	I := \idotsint\limits_{0<x_1<t_1<y_1<x_2<t_2\atop <\cdots <x_k<t_k<y_k<T}
	& F(x_1,y_1,\ldots, x_k, y_k) \times
	\Bigg\{\, \frac{\prod_{i=0}^k \bZ^{\alpha; W, \rm c}_{\hat \beta, \hat h}(y_{i},x_{i+1})}
	{\bZ^{\alpha; W, \rm c}_{\hat \beta, \hat h}(0,T)} \Bigg\}\,
	\\
	& \times \Bigg[ \prod_{i=1}^{k}
	\frac{C_\alpha}{(x_{i} - y_{i-1})^{1-\alpha}
	(y_i-x_i)^{1+\alpha}} \Bigg] \frac{T^{1-\alpha}}{(T-y_k)^{1-\alpha}} \, \dd x \, \dd y \,,
\end{split}
\end{equation}
where we set $y_0 := 0$, $x_{k+1} := T$ and
$\dd x \, \dd y$ is a shorthand for $\dd x_1 \dd y_1 \cdots \dd x_k \dd y_k$.

Denoting by $\P_{N}^{\rm c}$ the law of the
renewal process $\tau \cap [0,N]$
conditioned to visit $N$,
\begin{equation}\label{eq:PN}
	\P_N^{\rm c}(\,\cdot\,) := \P( \tau \cap [0,N] \in \,\cdot\,|\, N \in \tau) \,,
\end{equation}
the  pinning law $\p^{\omega, \rm c}_{N,\beta,h}$
can be written as follows,
cf.\ \eqref{eq:Gweight}, \eqref{eq:condpm} and \eqref{eq:Zpinc}:
\begin{equation}\label{eq:Gcweight}
	\frac{\p^{\omega, \rm c}_{N,\beta,h}(\tau)}
	{\p_N^{\rm c}(\tau)} :=
	\frac{1}{Z^{\omega, \rm c}_{\beta,h}(0,N)} e^{\sum_{n=1}^{N-1} (\beta \omega_n
	-\Lambda(\beta) + h) \ind_{\{n\in\tau\}}} \, .
\end{equation}
In particular, the law
$\p^{\omega, \rm c}_{TN,\beta,h}$ reduces to
$\P_{TN}^{\rm c}$ for $\beta = h = 0$. In this special case, the convergence $I_N \to I$
is shown in the proof of Proposition~\ref{P:univrenew},
cf.\ \eqref{eq:topro} and the following lines,
exploiting the renewal decomposition \eqref{eq:Inbasic}
for $I_N$. In the general case,
with $\p^{\omega, \rm c}_{TN,\beta,h}$ instead
$\P_{TN}^{\rm c}$, we have a completely analogous decomposition, thanks to \eqref{eq:Gcweight}:
\begin{equation*}
\begin{split}
	I_N = \frac{1}{N^{2k}} \sum_{0 \le a_1 \le Nt_1} \,
	& \sum_{N t_1 < b_1 \le a_2 \le Nt_2}
	\cdots \sum_{N t_{k-1} < b_{k-1} \le a_k \le Nt_k} \, \sum_{N t_k < b_k \le NT}
	F\Big(\frac{a_1}{N}, \frac{b_1}{N}, \ldots, \frac{a_k}{N}, \frac{b_k}{N}\Big) \\
	& \times
	\Bigg\{ \prod_{i=1}^k e^{\beta_N (\omega_{a_i} + \omega_{b_i}) - 2 \Lambda(\beta_N)
	+ 2 h_N}\Bigg\}
	\Bigg\{\, \frac{\prod_{i=0}^k Z^{\omega, \rm c}_{\beta_N, h_N}(b_{i},a_{i+1})}
	{Z^{\omega, \rm c}_{\beta_N, h_N}(0,NT)} \Bigg\}\, \\
	& \times \Bigg[ \prod_{i=1}^{k}
	N^2 \, u(a_i - b_{i-1}) K(b_i - a_i) \Bigg] \frac{u(\lfloor NT\rfloor-b_k)}{u(\lfloor NT\rfloor)} .
\end{split}
\end{equation*}
We stress that the difference with respect to \eqref{eq:Inbasic}
is only given by the two terms in brackets appearing in the middle line.
The first term in brackets converges to $1$ as $N\to\infty$,
because $\max_{0 \le n \le NT} |\omega_n| = O(\log N)$
(as we already remarked in \S\ref{S:proofbZprop}).
When we set $a_i = N x_i$ and $b_i = N y_i$,
the second term in brackets
converges to its analogue in \eqref{eq:intJ} involving the continuum partition functions,
for $\bbP$-a.e. fixed realization of $(\omega,W)$ (thanks to our coupling),
and is uniformly bounded by some (random) constant,
because the continuum partition functions are a.s. continuous and strictly positive,
cf.\ Theorem~\ref{T:bZprop}~\eqref{it:1}.
Since the convergence of \eqref{eq:topro} to \eqref{eq:intI} is shown
by a Riemann sum approximation, the convergence
of \eqref{eq:toproJ} to \eqref{eq:intJ} follows immediately,
completing the proof of point \eqref{it:cou2}.
\qed

\section{Key properties of the CDPM}\label{S:CDPMprop}

In this section, we prove Theorems~\ref{T:averageabs} and~\ref{T:quenchsing}.
The parameters $\alpha \in (\frac{1}{2}, 1)$, $T > 0$, $\hbeta > 0$ and $\hh \in \R$
are fixed throughout the section.
We use in an essential way the continuum partition functions
$\big(\bZ^{\alpha; W, \rm c}_{\hat\beta, \hat h}(s, t)\big)_{0\leq s\leq t\leq T}$,
cf.\ Theorems~\ref{T:cconv2} and~\ref{T:bZprop},
and the characterization
of the CDPM quenched law $\bP^{\alpha; W, \rm c}_{T, \hat \beta, \hh}$
in terms of restricted f.d.d., given in Theorem~\ref{T:cconv3}.

\medskip

\noindent
{\bf Proof of Theorem~\ref{T:averageabs}.}
Assume that $\hh = 0$,
and recall definition \eqref{eq:bPnT} of the ``reference law'' $\bP^{\alpha; \rm c}_T$.
We will show at the end of the proof the following equality of two probability measures on $\cC$:
\begin{equation}\label{aveqlaw}
\bbE\big[
\bZ^{\alpha; W, \rm c}_{\hat\beta,0}(0,T) \, \bP^{\alpha; W, \rm c}_{T, \hat \beta, 0} (\cdot)
\big] = \bP^{\alpha; \rm c}_T(\cdot).
\end{equation}
Let us assume this for the moment.

By \eqref{aveqlaw},
if $\bP^{\alpha; \rm c}_T(A) = 0$ for some $A \subseteq \cC$, then
$\bP^{\alpha; W, \rm c}_{T, \hat \beta, 0} (A) = 0$ for $\bbP$-a.e.
$W$ (because  $\bZ^{\alpha; W, \rm c}_{\hat\beta,0}(0,T)>0$ a.s., by
Theorem~\ref{T:bZprop}~\eqref{it:1}), hence
$\bbE[\bP^{\alpha; W, \rm c}_{T, \hat \beta, 0} (A)] = 0$.
This shows that the law $\bbE[\bP^{\alpha; W, \rm c}_{T, \hat \beta, 0}(\cdot)]$
is absolutely continuous with respect to
$\bP^{\alpha; \rm c}_T(\cdot)$,
proving Theorem~\ref{T:averageabs} for $\hh = 0$.

\smallskip

We now turn to the case $\hh \ne 0$. By
Remark~\ref{R:CM}, the continuum partition functions
$\big(\bZ^{\alpha; W, \rm c}_{\hat\beta, \hat h}(s, t)\big)_{0\leq s\leq t\leq T}$
have a law that, for $\hh \ne 0$,
is absolutely continuous with respect to case $\hh = 0$,
with Radon-Nikodym density $\mathfrak{f}_{T, \hbeta, \hh}(W)$
given in \eqref{eq:RN}.
Since the restricted f.d.d. of $\bP^{\alpha; W, \rm c}_{T, \hat \beta, \hat h}$ are
expressed in terms of continuum partition functions,
cf.\ \eqref{CDPMfdd}, the two probability measures
$\bbE[\bP^{\alpha; W, \rm c}_{T, \hat \beta, \hh}(\cdot)]$ and
$\bbE[ \mathfrak{f}_{T, \hbeta, \hh}(W) \, \bP^{\alpha; W, \rm c}_{T, \hat \beta, 0}(\cdot)]$
on $\cC$
have the same restricted f.d.d. and hence are identical,
by Proposition~\ref{P:cK+char2}~\eqref{it:resti}.
As a consequence, if $\bbE[\bP^{\alpha; W, \rm c}_{T, \hat \beta, \hh} (A)] = 0$
for $\hh = 0$, the same is true also for $\hh \ne 0$, completing
the proof of Theorem~\ref{T:averageabs}.

\smallskip

It remains to establish \eqref{aveqlaw}.
Note that its LHS is indeed a probability law on $\cC$,
since $\bbE[\bZ^{\alpha; W, \rm c}_{\hat\beta,0}(0,T)]=1$ by the Wiener-chaos expansion
in \eqref{cpinWC} with $\hat h=0$.
It suffices to show that the LHS and RHS in \eqref{aveqlaw}
have the same restricted f.d.d., by Proposition~\ref{P:cK+char2}~\eqref{it:resti},
and this follows immediately from
relations \eqref{fddalpha} and \eqref{CDPMfdd}, because
$\bbE[\bZ^{\alpha; W, \rm c}_{\hat \beta, 0}(y_{i},x_{i+1})] = 1$.

\smallskip

Lastly, we note that the $\alpha$-stable regenerative set $\btau$ a.s.\ has Hausdorff dimension
$\alpha$ (see e.g.~\cite[Thm.~III.15]{B96}), and the same holds a.s.\ under the conditioned
measure $\bP^{\alpha; \rm c}_T$, which then carries over to the quenched law
$\bP^{\alpha; W, \rm c}_{T, \hbeta, \hh}$ of the CDPM,
for a.e.\ realization of $W$.
\qed

\medskip

\noindent
{\bf Proof of Theorem~\ref{T:quenchsing}.}
Let us set
$$
\cC_{0,T} := \{K\in \cC: 0,T \in K \subset [0,T]\}.
$$
By construction, $\bP^{\alpha; W, \rm c}_{T, \hbeta, 0}$ and $\bP^{\alpha; \rm c}_T$
are probability measures on $\cC_{0,T}$, equipped with the Borel $\sigma$-algebra $\cF$.
Recalling the definition \eqref{eq:gtdt0} of the maps $\tg_t, \td_t$,
for $n\in\N$ let $\cF_n$ be the
$\sigma$-algebra on $\cC_{0,T}$ generated by $\tg_{\frac{i}{2^n}T}$ and $\td_{\frac{i}{2^n}T}$ for
$1\leq i\leq 2^n-1$. Then $(\cF_n)_{n\in\N}$ is a filtration on $\cC_{0,T}$ that generates the
Borel $\sigma$-algebra $\cF$ on $\cC_{0,T}$, by Lemma~\ref{th:basic}.

Let $f^W_n: \cC_{0,T} \to [0,\infty]$ be the Radon-Nikodym derivative of
$\bP^{\alpha; W, \rm c}_{T, \hbeta, \hh}$ with respect to $\bP^{\alpha; \rm c}_T$ on
$(\cC_{0,T}, \cF_n)$.
If $\btau$ is a $\cC_{0,T}$-valued random variable with law
$\bP^{\alpha; \rm c}_T$, then $(f^W_n(\btau))_{n\in \N}$ is a non-negative martingale adapted to
the filtration $(\cF_n)_{n\in\N}$, and $\bP^{\alpha; W, \rm c}_{T, \hbeta, \hh}$ is singular w.r.t.\
$\bP^{\alpha; \rm c}_T$ if and only if $f^W_n(\btau) \to 0$ a.s.. To prove
Theorem~\ref{T:quenchsing}, it suffices to show
that, under the joint law of $\btau$ and $W$,
\begin{equation}\label{fWnlim}
f^W_n(\btau) \xrightarrow[n\to\infty]{} 0 \quad \ \ \text{in probability} \,,
\end{equation}
because we already know that the martingale limit $\lim_{n\to\infty} f^W_n(\btau)$ exists a.s..

We next identify $f^W_n(\btau)$. Without loss of generality, assume $T=1$.
To remove duplicates among the
random variables $\tg_{\frac{i}{2^{n}}}$, $\td_{\frac{i}{2^{n}}}$, for $1\leq i\leq 2^n-1$, let us set
\begin{gather*}
	I_n(\btau) := \{ 2\leq i\leq 2^n - 1:
\ \btau \cap (\tfrac{i-1}{2^{n}}, \tfrac{i}{2^{n}}] \neq \emptyset\} \,,\\
	a_{n,j}(\btau) := \min \big\{ \btau \cap [\tfrac{j-1}{2^{n}}, \tfrac{j}{2^{n}}] \big\} \,,
	\quad \
	b_{n,j}(\btau) := \max \big\{ \btau \cap [\tfrac{j-1}{2^{n}}, \tfrac{j}{2^{n}}] \big\} \,.
\end{gather*}
Then we claim that the following explicit expression for $f_n^W(\btau)$ holds:
\begin{equation}\label{fWnbtau}
	f_n^W(\btau) = \frac{\prod_{j \in \{1\} \cup I_n(\btau) \cup \{2^n\}}
	\bZ^{\alpha; W, \rm c}_{\hbeta,\hh}(a_{n,j}(\btau), b_{n,j}(\btau))}
	{\bZ^{\alpha; W, \rm c}_{\hbeta,\hh}(0,1)} \,.
\end{equation}
In order to prove it, first note that $f_n^W(\btau)$
must necessarily be a function of the vector
\begin{equation} \label{eq:vecto}
	V_n(\btau) := \Big( \tg_{\frac{1}{2^n}}(\btau), \,
	\big(\td_{\frac{j-1}{2^n}}(\btau) , \tg_{\frac{j}{2^n}}(\btau)\big)_{j \in I_n(\btau)}, \,
	\td_{1-\frac{1}{2^n}}(\btau) \Big) \,,
\end{equation}
because the variables $\tg_{\frac{i}{2^{n}}}(\btau)$, $\td_{\frac{i}{2^{n}}}(\btau)$
for $i\not\in I_n(\btau)$ are just repetitions of those
in $V_n(\btau)$.\footnote{The presence of $\tg_{\frac{1}{2^n}}(\btau)$
and $\td_{1-\frac{1}{2^n}}(\btau)$ in \eqref{eq:vecto}
is due to the fact that $0$ and $T$ are accumulation points of $\btau$,
$\bP^{\alpha; \rm c}_1(\dd\btau)$-a.s., hence $\btau \cap (0,\frac{1}{2^n}] \ne \emptyset$
and $\btau \cap (1-\frac{1}{2^n}, 1] \ne \emptyset$.}
Then observe that $V_n(\btau)$ can be rewritten equivalently as
\begin{equation*}
	V_n(\btau) = \Big( \tg_{\frac{1}{2^n}}(\btau), \,
	\td_{\frac{1}{2^n}}(\btau), \,
	\big(\tg_{\frac{j}{2^n}}(\btau), \td_{\frac{j}{2^n}}(\btau)\big)_{j \in I_n(\btau)} \Big) \,,
\end{equation*}
because if $j < j'$ are consecutive points in $I_n(\btau)$ then
$\td_{\frac{j}{2^n}}(\btau) = \td_{\frac{j'-1}{2^n}}(\btau)$.
Finally, note that on the event $I_n(\btau) = J$,
defining $t_1 := \frac{1}{2^n}$ and $\{t_2, \ldots, t_k\} := \{\frac{j}{2^n}\}_{j\in J}$,
the density of the random vector $V_n(\btau)$
(w.r.t.\ Lebesgue measure on $\R^{2k}$)
under $\bP^{\alpha; \rm c}_1$ is given by \eqref{fddalpha},
while under $\bP^{\alpha; W, \rm c}_{1, \hbeta, \hh}$ is given by \eqref{CDPMfdd};
then \eqref{fWnbtau} follows comparing \eqref{fddalpha} and \eqref{CDPMfdd}.

\smallskip

Since $\bZ^{\alpha; W, \rm c}_{\hbeta,0}(0,1)>0$ a.s.\ by Theorem~\ref{T:bZprop}, relation
\eqref{fWnlim} will follow by showing that
\begin{equation}\label{fWnlim2}
\mbox{$\bP^{\alpha; \rm c}_1$-a.s.}, \qquad
\bbE\big[\big(f^W_n(\btau) \bZ^{\alpha; W, \rm c}_{\hbeta, \hh}(0,1)\big)^\gamma\big]
\asto{n} 0 \qquad \mbox{for some } \gamma\in (0,1).
\end{equation}
We can reduce to the case $\hh = 0$
using Remark~\ref{R:CM}, cf.\ in particular \eqref{eq:RN}, writing
\begin{equation*}
\begin{split}
	\bbE\big[\big(f^W_n(\btau) \bZ^{\alpha; W, \rm c}_{\hbeta, \hh}(0,1)\big)^\gamma\big]
	& = \bbE\big[
	\mathfrak{f}_{1, \hbeta, \hh}(W)
	\big(f^W_n(\btau) \bZ^{\alpha; W, \rm c}_{\hbeta, 0}(0,1)\big)^\gamma
	 \big]  \\
	& \le \bbE\big[
	\mathfrak{f}_{1, \hbeta, \hh}(W)^{\frac{p}{p-1}}\big]^{\frac{p-1}{p}}
	\, \bbE\big[
	\big(f^W_n(\btau) \bZ^{\alpha; W, \rm c}_{\hbeta, 0}(0,1)
	\big)^{\gamma p}
	 \big]^{1/p} \,.
\end{split}
\end{equation*}
Choosing $p \in (1,\infty)$ close to $1$, it suffices to prove
\eqref{fWnlim2} in the special case $\hh = 0$.

\smallskip

Henceforth we fix $\hh = 0$.
For a given realization of $\btau$, we note that the factors in the numerator of \eqref{fWnbtau}
are independent, by Theorem~\ref{T:bZprop}~\eqref{it:1}. Therefore
\begin{equation}\label{fWnlim3}
\begin{aligned}
\bbE\big[\big(f^W_n(\btau) \bZ^{\alpha; W, \rm c}_{\hbeta,0}(0,1)\big)^\gamma\big]
& = \prod_{j\in \{1\} \cup I_n(\btau) \cup \{2^n\}} \bbE\big[\big( \bZ^{\alpha; W, \rm c}_{\hbeta,0}
(a_{n,j}(\btau), b_{n,j}(\btau)) \big)^\gamma\big] \\
& = \prod_{j\in \{1\} \cup I_n(\btau) \cup \{2^n\}}
\bbE\big[\big( \bZ^{\alpha; W, \rm c}_{\hbeta_{n,j}(\btau),0}(0,1) \big)^\gamma\big],
\end{aligned}
\end{equation}
where we set $\hbeta_{n,j}(\btau) :=
\hbeta (b_{n,j}(\btau)-a_{n,j}(\btau))^{\alpha-1/2}$ and we used the translation invariance and
scaling property of $\bZ^{\alpha; W, \rm c}_{\hbeta,0}(\cdot, \cdot)$
established in Theorem~\ref{T:bZprop}~\eqref{it:2}-\eqref{it:3}.

We claim that for any $\gamma \in (0,\frac{1}{2})$
there exists $c=c(\gamma)>0$ such that for $\hbeta>0$ sufficiently small,
\begin{equation}\label{Zfracmom}
\bbE\big[\big( \bZ^{\alpha; W, \rm c}_{\hbeta,0}(0,1) \big)^\gamma\big] \leq 1-c \hbeta^2
\leq e^{-c\hbeta^2}.
\end{equation}
Substituting this bound into \eqref{fWnlim3} then gives
$$
\log \bbE\big[\big(f^W_n(\btau) \bZ^{\alpha; W, \rm c}_{\hbeta,0}(0,1)\big)^\gamma\big]
\leq  - c\hbeta^2 \!\!\!\!\sum_{j\in \{1\} \cup I_n(\btau) \cup \{2^n\}}\!\!\!\!
(b_{n,j}(\btau)-a_{n,j}(\btau))^{2\alpha-1} \,.
$$
The RHS diverges $\bP^{\alpha; \rm c}_1(\dd\btau)$-a.s.\ as $n\to\infty$,
because
$\{[a_{n,j}(\btau), b_{n,j}(\btau)]\}_{j\in \{1\} \cup I_n\cup\{2^{n}\}}$ is a covering of $\btau$
with balls of
diameter at most $2^{-n}$, and $\btau$ a.s.\ has Hausdorff dimension $\alpha$, which is strictly
larger than $2\alpha-1$ for $\alpha \in (\frac{1}{2},1)$.
The divergence follows from the definition of the
Hausdorff dimension (see e.g.~\cite[Section~III.5]{B96}.

Lastly we prove \eqref{Zfracmom}. By \eqref{cpinWC}, we have the representation
$$
\bZ^{\alpha; W, \rm c}_{\hbeta, 0}(0,1) = 1 + \sum_{k=1}^\infty \hbeta^k Y_k \,,
$$
where $Y_k$ is a random variable in the $k$-th order Wiener chaos expansion,
and we recall that the series converges in $L^2$ for all $\hbeta>0$.
By Taylor expansion, there exist $\epsilon, C>0$ such that
$$
(1+x)^\gamma \leq 1+ \gamma x - Cx^2 \qquad \mbox{for all } |x|\leq \epsilon.
$$
For later convenience, let us define
\begin{equation*}
	\cS_\hbeta := \sum_{k=1}^\infty \hbeta^k Y_k \,, \qquad
	\cT_\hbeta := \sum_{k=2}^\infty \hbeta^k Y_k \,.
\end{equation*}
We then obtain
\begin{align}
\nonumber
\bbE\big[\big( & \bZ^{\alpha; W, \rm c}_{\hbeta,0}  (0,1) \big)^\gamma\big] =
\bbE\big[(1+\cS_\hbeta)^\gamma\ind_{\{|\cS_\hbeta|\leq \epsilon\}}\big]
+ \bbE\big[(1+\cS_\hbeta)^\gamma
\ind_{\{|\cS_\hbeta|> \epsilon\}}\big] \\
\nonumber
& \leq 1+ \gamma\bbE[\cS_\hbeta\ind_{\{|\cS_\hbeta|\leq \epsilon\}}] -
C\bbE[\cS_\hbeta^2\ind_{\{|\cS_\hbeta|\leq \epsilon\}}] + \bbE[1+\cS_\hbeta]^\gamma
\bbP(|\cS_\hbeta|\geq \epsilon)^{1-\gamma}  \\
\label{eq:trelin}
& = 1 - C \bbE[\cS_\hbeta^2]
+ \big\{ - \gamma \bbE[\cS_\hbeta\ind_{\{|\cS_\hbeta| > \epsilon\}}]
+ C\bbE[\cS_\hbeta^2\ind_{\{|\cS_\hbeta| > \epsilon\}}] +
\bbP(|\cS_\hbeta|\geq \epsilon)^{1-\gamma}  \big\} \,,
\end{align}
having used the fact that $\bbE[\cS_\hbeta] = 0$ in the last line.
Observe that
\begin{equation*}
	\bbE[\cS_\hbeta^2] = \sum_{k=1}^\infty
	\hbeta^{2k} \E[Y_k^2] = \E[Y_1^2] \hbeta^2 + O(\hbeta^4)
	\qquad \text{as } \hbeta \downarrow 0 \,,
\end{equation*}
hence the first two terms in \eqref{eq:trelin} give the correct asymptotic behavior,
cf. \eqref{Zfracmom}. It remains to show that the three terms in brackets
are $o(\hbeta^2)$. Note that
\begin{equation*}
	\bbE[\cT_\hbeta^2] = \sum_{k=2}^\infty \hbeta^{2k} \E[Y_k^2] = O(\hbeta^4) \,,
\end{equation*}
and moreover $\bbE[Y_1^4] \le (const.) \bbE[Y_1^2]^2 < \infty$,
by the hyper-contractivity of Wiener chaos expansions (see e.g.~\cite[Thm.~3.50]{J97}).
Writing $\cS_\hbeta = \hbeta Y_1 + \cT_\hbeta$, we obtain
\begin{equation*}
	\bbP(|\cS_\hbeta| \ge \epsilon) \le \bbP(|\hbeta Y_1| \ge \tfrac{1}{2}\epsilon)
	+ \bbP(|\cT_\hbeta| \ge \tfrac{1}{2} \epsilon)
	\le \big(\tfrac{2}{\epsilon}\big)^4 \E[Y_1^4] \hbeta^4 +
	\big(\tfrac{2}{\epsilon}\big)^2 \bbE[\cT_\hbeta^2] = O(\hbeta^4) \,.
\end{equation*}
Since $\bbE[\cS_\hbeta^2] = O(\hbeta^2)$, we can also write
\begin{equation*}
	\big| \bbE[\cS_\hbeta\ind_{\{|\cS_\hbeta| > \epsilon\}}] \big|
	\le \bbE[\cS_\hbeta^2]^{1/2} \, \bbP(|\cS_\hbeta| > \epsilon)^{1/2}
	= O(\hbeta^3) \,,
\end{equation*}
and analogously
\begin{equation*}
\begin{split}
	\bbE[\cS_\hbeta^2\ind_{\{|\cS_\hbeta| > \epsilon\}}] & \le
	2 \bbE[(\hbeta Y_1)^2\ind_{\{|\cS_\hbeta| > \epsilon\}}] +
	2 \bbE[\cT_\hbeta^2\ind_{\{|\cS_\hbeta| > \epsilon\}}] \\
	& \le 2 \hbeta^2 \bbE[Y_1^4]^{1/2} \, \bbP(|\cS_\hbeta| > \epsilon)^{1/2} +
	2 \bbE[\cT_\hbeta^2] = O(\hbeta^4) \,.
\end{split}
\end{equation*}
The terms in bracket in \eqref{eq:trelin}
are thus $O(\hbeta^3) + O(\hbeta^4) + O(\hbeta^{4(1-\gamma)})$,
which is $o(\hbeta^2)$ provided we choose $4(1-\gamma) > 2$, i.e.
$\gamma < \frac{1}{2}$.
This concludes the proof of \eqref{fWnlim2} and Theorem~\ref{T:quenchsing}.
\qed

\appendix

\section{Random Closed Subsets of $\R$}
\label{A:RCS}

In this section, we give a self-contained account
of the theoretical background needed to study random closed sets of $\R$.

\subsection{Closed subsets of $\R$}

We denote by $\cC$ the class of all closed subsets of $\R$ (including the empty set):
\begin{equation}\label{eq:C}
	\cC := \{C \subseteq \R: \ C \text{ is closed} \}.
\end{equation}
We equip the set $\cC$ with the so-called Fell-Matheron topology, built as follows.

We first compactify $\R$ by defining $\bR:= \R\cup \{\pm\infty\}$, equipped with the metric
\begin{equation}
d(x, y) = |\arctan(y) - \arctan(x)| \qquad \mbox{for all } x,y \in \bR.
\end{equation}
The Hausdorff distance of two compact non-empty subsets $K, K' \subseteq \bR$ is
defined by
\begin{equation} \label{eq:Hausdorff}
d_{\rm H}(K, K') :=
\max\Big\{ \sup_{x\in K} d(x, K') \, , \, \sup_{x'\in K'} d(x', K)
\Big\} ,
\end{equation}
where $d(a, B) :=\inf_{b\in B}d(a,b)$. (Note that
$d_H(K, K')\leq \epsilon$ if and only if for each $x\in K$ there is $x'\in K'$
with $d(x, x')\leq \epsilon$, and vice versa switching the roles of $K$ and $K'$.)

Coming back to $\cC$, one can
identify a closed subset $C \subseteq \R$ with the compact
non-empty subset $C \cup \{\pm\infty\} \subseteq \bR$. This allows to
define a metric $d_{\rm FM}$ on $\cC$:
\begin{equation} \label{eq:FM}
	d_{\rm FM}(C, C') := d_{\rm H}(C \cup \{\pm\infty\}, C' \cup \{\pm\infty\}),
	\qquad C, C' \in \cC .
\end{equation}
The topology induced by the distance $d_{\rm FM}$ on $\cC$ is called
the Fell-Matheron topology \cite[Prop. 1-4-4 and Remark on p.14]{M75}.
Since the metric space
$(\cC, d_{\rm FM})$ is \emph{compact}
(hence separable and complete), cf.~\cite[Th. 1-2-1]{M75},
it follows that $\cC$ is a \emph{Polish space}.

\begin{remark}\rm\label{rem:Mateq}
By \eqref{eq:Hausdorff}-\eqref{eq:FM}, $C_n \to C$ in $\cC$ if and only if
the following conditions hold:
\begin{itemize}
\item for every open set $G \subseteq \R$
with $G \cap C \ne \emptyset$, one has $G \cap C_n \ne \emptyset$
for large $n$;
\item for every compact set $K \subseteq \R$
with $K \cap C = \emptyset$, one has $K \cap C_n = \emptyset$
for large $n$.
\end{itemize}
We also observe that
the Fell-Matheron topology on closed subsets
can be studied for more general topological space, together with the
topology induced by the Hausdorff
metric \eqref{eq:Hausdorff} on compact non-empty subsets (called myope topology):
for more details, we refer to \cite{M75}, \cite[Appendixes B and C]{M05}
and~\cite[Appendix B]{SSS14}.
\end{remark}

\subsection{Finite-dimensional distributions}
The space $\cC$ is naturally equipped with the Borel $\sigma$-algebra $\cB(\cC)$ generated
by the open sets. By \emph{random closed subset of $\R$} we mean any $\cC$-valued
random variable $X$.
We are going to characterize the law of $X$, which is a probability measure on $\cC$,
in terms of suitable
\emph{finite-dimensional distributions}, which provide useful criteria
for convergence in distribution.

To every element $C\in \cC$ we associate two non-decreasing and right-continuous functions
$t \mapsto \tg_t(C)$ and $t \mapsto \td_t(C)$, defined for $t \in \R$ with values in $\bR$
as follows:
\begin{equation} \label{eq:gtdt}
\tg_t(C) := \sup\{x: \ x\in C, \ x \le t\}, \qquad
\td_t(C) := \inf\{x: \ x\in C, \ x > t\}
\end{equation}
(where $\sup\emptyset := -\infty$ and $\inf\emptyset := +\infty$).
Note that \emph{either function determines the set $C$}, because $t \in C$ if and only if
$\tg_t(C) = t$ if and only if $\td_{t-}(C) = t$. It is therefore natural to describe
a random closed set $X$
in terms of the random functions $t \mapsto \tg_t(X)$ and $t \mapsto \td_t(X)$.

For convenience, we state results for both
$\tg$ and $\td$, even if one could focus only on one of the two.
We start with some basic properties of the maps $\tg_t(\cdot)$ and $\td_t(\cdot)$.

\begin{lemma} \label{th:basic}
For every $t\in\R$,
consider $\tg_t(\cdot)$ and $\td_t(\cdot)$ as maps from $\cC$ to $\bR$.
\begin{ienumerate}
\item\label{it:basici} These maps are measurable with respect to
the Borel $\sigma$-algebra $\cB(\cC)$, and they generate it
as the index $t$ ranges in a dense set $\cT \subseteq \R$, i.e.\
$\cB(\cC) = \sigma((\tg_t)_{t\in\cT}) = \sigma((\td_t)_{t\in\cT})$.

\item\label{it:basicii} These maps are not continuous on $\cC$. In fact, the map
$\tg_t(\cdot)$ is continuous at $C \in \cC$
if and only if the function $\tg_\cdot(C)$
is continuous at $t$. The same holds for $\td_t(\cdot)$.
\end{ienumerate}
\end{lemma}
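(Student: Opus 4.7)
My plan is to derive both parts from the characterization of Fell--Matheron convergence recalled in Remark~\ref{rem:Mateq}, which implies that $\cB(\cC)$ is generated by the sub-basic sets $\{C:C\cap G\ne\emptyset\}$ for $G\subseteq\R$ open and $\{C:C\cap K=\emptyset\}$ for $K\subseteq\R$ compact.

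For part~(i), measurability follows by rewriting level sets as countable combinations of those sub-basic sets: for instance, $\{\tg_t\ge a\}=\bigcap_{n\ge 1}\{C:C\cap[a-\tfrac{1}{n},t]\ne\emptyset\}$ is a countable intersection of closed sets, and an analogous identity handles $\td_t$. For the generation statement it suffices to work with a countable dense $\cT'\subseteq\cT$, and I will express the two generating families of $\cB(\cC)$ via
\begin{equation*}
\{C\cap(a,b)\ne\emptyset\}=\bigcup_{\substack{q,r\in\cT'\\ a<q<r<b}}\{\tg_r(C)>q\},
\qquad
\{C\cap[a,b]=\emptyset\}=\bigcup_{m\in\N}\{C\cap(a-\tfrac{1}{m},b+\tfrac{1}{m})=\emptyset\},
\end{equation*}
where the second uses that $C$ is closed and $[a,b]$ is compact. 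A symmetric identity with $\td_q(C)<r$ in place of $\tg_r(C)>q$ handles the family $(\td_t)_{t\in\cT'}$.

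For part~(ii), I focus on $\tg_t$, the case of $\td_t$ being symmetric. Since $s\mapsto\tg_s(C)$ is automatically right-continuous, continuity at $t$ is equivalent to $\tg_{t^-}(C)=\tg_t(C)$; this fails precisely when $t\in C$ and some $(t-\delta,t)\cap C=\emptyset$. In that discontinuous case the explicit perturbation $C_n:=(C\setminus\{t\})\cup\{t+1/n\}$ converges to $C$ in $\cC$ (a quick check via Remark~\ref{rem:Mateq}) yet satisfies $\tg_t(C_n)=\tg_{t^-}(C)<\tg_t(C)$, showing discontinuity of $\tg_t(\cdot)$ at $C$.

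The main obstacle is the converse: assuming $\tg_{t^-}(C)=\tg_t(C)=:s$ and $C_n\to C$, I need to show $\tg_t(C_n)\to s$. The case $s=-\infty$, equivalent to $C\cap(-\infty,t]=\emptyset$, is immediate from the second half of Remark~\ref{rem:Mateq} applied to each compact set $[-M,t]$. When $s\in\R$ one has $s\in C$ as the attained supremum of a non-empty closed bounded-above set, and the first half of Remark~\ref{rem:Mateq} then yields $x_n\in C_n$ with $x_n\to s$, giving $\liminf\tg_t(C_n)\ge s$ (in the boundary subcase $s=t$ I apply the same extraction to points $y_k\in C$ with $y_k\uparrow t$). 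The matching $\limsup\le s$ follows because any subsequential cluster point of $x_{n_k}\in C_{n_k}\cap(-\infty,t]$ with $x_{n_k}\ge s+\epsilon$ would lie in $C\cap[s+\epsilon,t]$, contradicting $\tg_t(C)=s$.
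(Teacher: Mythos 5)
Your argument for part~(ii) in the discontinuous direction has a real gap: the perturbed set $C_n := (C\setminus\{t\})\cup\{t+1/n\}$ need not belong to $\cC$. The discontinuity hypothesis $\tg_{t^-}(C)<\tg_t(C)$ forces $t\in C$ with a left gap $(t-\delta,t)\cap C=\emptyset$, but it says nothing about the right side, and $t$ may well be a right-sided accumulation point of $C$. Concretely, with $C=\{0\}\cup\{1/k:k\in\N\}$ and $t=0$ we have $\tg_{0^-}(C)=-\infty<0=\tg_0(C)$, yet $C\setminus\{0\}=\{1/k:k\in\N\}$ is not closed, and taking closures puts $0$ back, destroying the counterexample. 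The paper's perturbation $C_n:=C+\tfrac1n$ (a rigid right translation) sidesteps this entirely: it is always closed, converges to $C$ by Remark~\ref{rem:Mateq}, and satisfies $\tg_t(C_n)=\tg_{t-1/n}(C)+\tfrac1n\to\tg_{t^-}(C)\ne\tg_t(C)$. Your converse direction (continuity at $t$ implies continuity of $\tg_t(\cdot)$ at $C$, covering the cases $s=-\infty$, $s\in(-\infty,t)$, $s=t$, and using that cluster points of $x_{n_k}\in C_{n_k}$ lie in $C$) is correct and in content matches the paper's case split.

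For part~(i) your route is genuinely different from the paper's and is sound. You express the Fell--Matheron sub-basic events $\{C\cap(a,b)\ne\emptyset\}$ and $\{C\cap[a,b]=\emptyset\}$ directly as countable Boolean combinations of level sets $\{\tg_r>q\}$, $q,r$ in a countable dense subset of $\cT$; this is a hands-on, set-theoretic argument. The paper instead first proves measurability by writing $\tg_t(C)$ as the iterated limit of the continuous functionals $G_{t;m,\epsilon}(C)=\tfrac1\epsilon\int_t^{t+\epsilon}\max\{\tg_s(C),-m\}\,\dd s$ (using part~(ii) for a.e.-pointwise convergence plus domination), and then, for $\cB(\cC)\subseteq\sigma((\tg_t)_{t\in\cT})$, constructs measurable maps $\psi_n(C)=\{\tg_{t_1}(C),\ldots,\tg_{t_n}(C)\}\cap\R$ that converge to the identity on $\cC$. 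Yours is more elementary and does not invoke part~(ii); the paper's is shorter once~(ii) is available. If you keep your route, just be explicit that you are reducing to the countable sub-basis of rational intervals (which suffices since $\cC$ is second countable), as a general compact $K$ in $\{C:C\cap K=\emptyset\}$ is not a finite union of closed intervals.
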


Given a $\cC$-valued random variable $X$,
we call \emph{$\tg$-finite-dimensional distributions ($\tg$-f.d.d.) of $X$}
the laws of the
random vectors $(\tg_{t_1}(X), \ldots, \tg_{t_k}(X))$,
for $k\in\N$ and $t_1, \ldots, t_k \in \R$.
Analogously, we call
$\td$-f.d.d. the laws of the random vectors $(\td_{t_1}(X), \ldots, \td_{t_k}(X))$.
We simply write f.d.d. to mean either $\tg$-f.d.d. or $\td$-f.d.d., or both,
when no confusion arises.

Since $X$ is determined by the functions $t \mapsto \tg_t(X)$
and $t \mapsto \td_t(X)$,
it is not surprising that the law of $X$ on $\cC$ is uniquely determined
by its f.d.d.,
and that criteria for convergence in distribution $X_n \Rightarrow X$
of $\cC$-valued random variables can be given in terms of f.d.d.. Some care is needed, however,
because the maps $\tg_t(\cdot)$ and $\td_t(\cdot)$ are not continuous on $\cC$.
For this reason, given a $\cC$-valued random variable $X$,
we denote by $\cI_\tg(X)$ the subset of those $t \in \R$ for which
the function $s \mapsto \tg_s(X)$ is continuous at
$s=t$ with probability one:
\begin{equation} \label{eq:Ig}
	\cI_\tg(X) := \big\{t \in \R: \ \P\big(\tg_{t-}(X) = \tg_{t}(X) \big) = 1 \big\}.
\end{equation}
One defines $\cI_\td(X)$ analogously.
We then have the following result.

\begin{proposition}[Characterization and convergence via f.d.d.]\label{P:cK+char}
Let $(X_n)_{n\in\N}$, $X$ be $\cC$-valued random variables.
\begin{ienumerate}
\item\label{it:convi}
The set $\cI_\tg(X)$ is cocountable, i.e. $\R\setminus\cI_\tg(X)$ is at most countable.

\item\label{it:convii}
The law of $X$ is determined by
its $\tg$-f.d.d. with indices $t_1,\ldots, t_k$ in a dense set  $\cT \subseteq \R$.

\item\label{it:conviii} Assume that $X_n \Rightarrow X$. Then
the $\tg$-f.d.d. of $X_n$ with indices in the cocountable set $\cI_\tg(X)$ converge
weakly to the $\tg$-f.d.d. of $X$:
for all $k\in\N$ and $t_1, \ldots, t_k \in \cI_\tg(X)$,
\begin{equation*}
\begin{split}
	(\tg_{t_1}(X_n), \ldots, \tg_{t_k}(X_n)) & \Rightarrow (\tg_{t_1}(X), \ldots, \tg_{t_k}(X)) .
\end{split}
\end{equation*}

\item\label{it:conviiii}
Assume that the $\tg$-f.d.d. of $X_n$ with indices in set $\cT \subseteq \R$
with full Lebesgue measure
converge weakly: for $k\in\N$, $t_1, \ldots, t_k \in \cT$ there are
measures $\mu_{t_1, \ldots, t_k}$ on $\bR^k$ such that
\begin{equation*}
	(\tg_{t_1}(X_n), \ldots, \tg_{t_k}(X_n)) \Rightarrow \mu_{t_1, \ldots, t_k}.
\end{equation*}
Then there is a $\cC$-valued random variable $X$ such that $X_n \Rightarrow X$.
In particular, the $\tg$-f.d.d. of $X$ with indices in the set $\cT \cap \cI_\tg(X)$
are given by $\mu_{t_1, \ldots, t_k}$.
\end{ienumerate}
The same conclusions hold replacing $\tg$ by $\td$.
\end{proposition}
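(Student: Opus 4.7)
\medskip
\noindent\textbf{Plan of proof.}
The plan is to handle the four parts in order, leaning throughout on the fact that $t \mapsto \tg_t(C)$ is non-decreasing and right-continuous for every $C \in \cC$, together with the compactness of $(\cC, d_{\rm FM})$ stated after \eqref{eq:FM}.

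For \eqref{it:convi} I would introduce the auxiliary function $F(t) := \E[\arctan(\tg_t(X))]$. Since $\tg_t(X)$ is non-decreasing in $t$ for every realization and $\arctan: \bR \to [-\pi/2,\pi/2]$ is bounded and strictly increasing, $F$ is a bounded non-decreasing real function and therefore has at most countably many discontinuities. Any $t \in \cI_\tg(X)^c$ satisfies $\P(\tg_{t-}(X) < \tg_t(X)) > 0$, so strict monotonicity of $\arctan$ gives $F(t-) < F(t)$, placing $t$ in the discontinuity set of $F$; cocountability of $\cI_\tg(X)^c$ follows. Part \eqref{it:convii} is essentially immediate from Lemma~\ref{th:basic}\eqref{it:basici}: once one knows $\cB(\cC) = \sigma((\tg_t)_{t\in\cT})$ for a dense $\cT$, the law of $X$ is determined by the joint distribution of $(\tg_t(X))_{t\in\cT}$, and by a standard monotone class argument this in turn is determined by the finite-dimensional distributions.

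For \eqref{it:conviii} I would apply the continuous mapping theorem to the map $\Phi_{t_1,\ldots,t_k}: \cC \to \bR^k$ defined by $\Phi_{t_1,\ldots,t_k}(C) := (\tg_{t_1}(C),\ldots,\tg_{t_k}(C))$. By Lemma~\ref{th:basic}\eqref{it:basicii}, this map is continuous at $C$ whenever $s \mapsto \tg_s(C)$ is continuous at each $s = t_i$, and when all $t_i \in \cI_\tg(X)$ this is almost sure under the law of $X$; the continuous mapping theorem then delivers joint weak convergence of the $\tg$-f.d.d.

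The main obstacle is \eqref{it:conviiii}, where I would exploit compactness decisively. Since $(\cC, d_{\rm FM})$ is a compact Polish space, $\{X_n\}_{n\in\N}$ is automatically tight and, by Prohorov's theorem, every subsequence admits a further weakly convergent sub-subsequence. Let $X$ and $X'$ be any two such subsequential limits. Part \eqref{it:conviii} identifies the $\tg$-f.d.d.\ of $X$ at indices in $\cT \cap \cI_\tg(X)$ with $\mu_{t_1,\ldots,t_k}$, and similarly for $X'$ at indices in $\cT \cap \cI_\tg(X')$. Since $\cI_\tg(X)$ and $\cI_\tg(X')$ are both cocountable by \eqref{it:convi} and $\cT$ has full Lebesgue measure, the set $\cT' := \cT \cap \cI_\tg(X) \cap \cI_\tg(X')$ is still dense in $\R$; on $\cT'$ the two limits share all $\tg$-f.d.d., so part \eqref{it:convii} forces $X$ and $X'$ to have the same law. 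Uniqueness of the subsequential limit then upgrades to convergence in distribution of the full sequence $X_n$. The delicate point, which cocountability of $\cI_\tg$ resolves, is that the set of ``good'' indices at which the f.d.d.\ are specified may a priori depend on the subsequential limit.
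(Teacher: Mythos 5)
Your proposal is correct and follows essentially the same route as the paper: monotonicity of $t\mapsto\tg_t(X)$ for part (i) (you spell out the bounded-monotone-expectation argument that the paper delegates to Billingsley \S13), the $\pi$-system generated by Lemma~\ref{th:basic} for part (ii), an a.s.-continuity argument for part (iii) (the paper phrases it via Skorokhod representation, you via the continuous mapping theorem — these are interchangeable), and compactness plus uniqueness of subsequential limits via parts (i)--(iii) for part (iv).
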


\begin{remark}\rm\label{rem:sbo}
In Proposition~\ref{P:cK+char}~\eqref{it:conviiii} it is sufficient that
$\cT$ has uncountably many
points in every non-empty open interval $(a,b) \subseteq \R$, as the proof shows.
In fact, arguing as in \cite[Th. 7.8 in Ch. 3]{EK86}, it is even enough
that $\cT$ is dense in $\R$
(in which case the f.d.d. of $X$
must be recovered from $\mu_{t_1,\ldots,t_n}$ by a limiting procedure,
since $\cT \cap \cI_\tg(X)$ could be empty).
\end{remark}

\begin{remark}\rm\label{rem:sko}
The map $C \mapsto (\tg_t(C))_{t\in\R}$ allows one to identify
$\cC$ with a class of functions $\cD_0$ that can be explicitly described:
\begin{equation}
\begin{split}
	\cD_0:= \{ f: \R\to \R \cup \{-\infty\} : \ \, & f(t)\leq t \mbox{ and } f(t+)=f(t), \
	\forall\, t\in \R; \\
        & \text{if }  f(t)<u \text{ for some } u<t, \text{ then } f(u)=f(t) \} .
\end{split}
\end{equation}
The functions in $\cD_0$ are non-decreasing and right-continuous,
hence c\`adl\`ag,
and it turns out that the Fell-Matheron topology on $\cC$
corresponds to the \emph{Skorokhod topology} on $\cD_0$.
As a matter of fact, given the structure of $\cD_0$,
convergence $f_n \to f$ in the Skorokhod topology
is equivalent to pointwise convergence $f_n(x) \to f(x)$ at
all continuity points $x$ of $f$.

We do not prove these facts, because we do not use them directly.
However, as the reader might have noticed,
the key results of this section are translations of
analogous results for the Skorokhod topology, cf.~\cite{B99,EK86,JS03}.
\end{remark}

\subsection{Restricted finite-dimensional distributions}
\label{sec:restricted}

It turns out that,
in order to describe the f.d.d. of a $\cC$-valued random variable $X$,
say the law of $(\tg_{t_1}(X), \ldots, \tg_{t_k}(X))$,
with $-\infty < t_1 < \ldots < t_k < +\infty$, it is sufficient
to focus on the event
\begin{equation} \label{eq:restricted}
\begin{split}
	& A_{t_1,\ldots, t_k}^X :=  \{X \cap (t_{1},t_{2}] \ne \emptyset , \
	X \cap (t_{2},t_{3}] \ne \emptyset , \ \ldots\ ,
	X \cap (t_{k-1},t_{k}] \ne \emptyset\} .
\end{split}
\end{equation}
We thus define the \emph{restricted $\tg$-f.d.d.} of
$X$ as the laws of the vectors $(\tg_{t_1}(X), \ldots, \tg_{t_k}(X))$
\emph{restricted on the event $A_{t_1,\ldots, t_k}^X$}.
These are sub-probabilities, i.e.\ measures with total mass
$\P(A_{t_1,\ldots, t_k}^X) \le 1$,
and we equip them with the usual topology of weak convergence
with respect to bounded and continuous functions.
One defines analogously the restricted $\td$-f.d.d. of $X$.
We can then rephrase Proposition~\ref{P:cK+char} as follows.

\begin{proposition}[Characterization and convergence via restricted f.d.d.]\label{P:cK+char2}
Let $(X_n)_{n\in\N}$, $X$ be $\cC$-valued random variables.
\begin{ienumerate}
\item\label{it:resti}
The law of $X$
is determined by its restricted
$\tg$-f.d.d. with indices in a dense set  $\cT \subseteq \R$.

\item\label{it:restii} Assume that $X_n \Rightarrow X$. Then
the restricted $\tg$-f.d.d. of $X_n$ with indices in
the cocountable set $\cI_\tg(X) \cap \cI_\td(X)$ converge
weakly to those of $X$.

\item\label{it:restiii}
Assume that the restricted $\tg$-f.d.d. of $X_n$ with indices
in a set $\cT \subseteq \R$ with full Lebesgue measure
converge weakly to some measures $\mu^{\rm rest}_{t_1,\ldots,t_k}$ on $\bR^k$.
Then there is a $\cC$-valued random variable $X$ such that $X_n \Rightarrow X$.
In particular, the restricted $\tg$-f.d.d. of $X$ with indices in the set
$\cT \cap \cI_\tg(X) \cap \cI_\td(X)$
are given by $\mu^{\rm rest}_{t_1, \ldots, t_k}$.
\end{ienumerate}
The same conclusions hold replacing $\tg$ by $\td$.
\end{proposition}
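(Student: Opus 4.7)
\textbf{Proof plan for Proposition~\ref{P:cK+char2}.}
The overall strategy is to reduce each of the three statements to its counterpart in Proposition~\ref{P:cK+char}, using the dictionary between restricted and unrestricted f.d.d. The crucial observation is that the event $A_{t_1,\ldots,t_k}^X$ in \eqref{eq:restricted} admits the double description
\[
A_{t_1,\ldots,t_k}^X \,=\, \bigcap_{i=1}^{k-1}\{\td_{t_i}(X)\le t_{i+1}\}
\,=\, \bigcap_{i=1}^{k-1}\{\tg_{t_i}(X)<\tg_{t_{i+1}}(X)\},
\]
because $X\cap(t_i,t_{i+1}]\neq\emptyset$ is equivalent to the leftmost point of $X$ strictly to the right of $t_i$ landing in $(t_i,t_{i+1}]$, and also to the rightmost point of $X$ in $(-\infty,t_{i+1}]$ being strictly greater than the rightmost one in $(-\infty,t_i]$.

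For part~\eqref{it:resti}, I plan to recover the unrestricted $\tg$-f.d.d.\ from the restricted ones by an inclusion–exclusion over the subset $S\subseteq\{1,\dots,k-1\}$ of indices for which $\tg_{t_i}(X)=\tg_{t_{i+1}}(X)$. For each such $S$, the vector $(\tg_{t_1}(X),\ldots,\tg_{t_k}(X))$ is obtained by duplicating entries of a shorter strictly increasing vector indexed by the complement of $S$, whose law is a restricted $\tg$-f.d.d.\ of lower order. Summing the contributions over all $S$ reconstructs the full $\tg$-f.d.d.\ with indices in $\cT$, and Proposition~\ref{P:cK+char}\eqref{it:convii} then identifies the law of $X$. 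The analogous reasoning works for $\td$.

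For part~\eqref{it:restii}, if $X_n\Rightarrow X$ then by Proposition~\ref{P:cK+char}\eqref{it:conviii} (applied jointly to $\tg$ and $\td$, which are legitimate since $\cI_\tg(X)\cap\cI_\td(X)$ is still cocountable) the vectors $(\tg_{t_1}(X_n),\td_{t_1}(X_n),\dots,\tg_{t_k}(X_n),\td_{t_k}(X_n))$ converge weakly to the corresponding vector for $X$ for every choice $t_1,\dots,t_k\in\cI_\tg(X)\cap\cI_\td(X)$. Using the first identity displayed above, the restricted $\tg$-f.d.d.\ is the weak limit multiplied by $\ind_{\{\td_{t_i}\le t_{i+1},\ i=1,\dots,k-1\}}$. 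The main technical point here is to verify that this indicator's discontinuity set has probability zero under the limit law. This follows because $t_{i+1}\in\cI_\td(X)$ forces $\P(t_{i+1}\in X)=0$, hence $\P(\td_{t_i}(X)=t_{i+1})=0$; therefore each constraint $\{\td_{t_i}\le t_{i+1}\}$ is, up to a null event, the open set $\{\td_{t_i}<t_{i+1}\}$, and Portmanteau applied to the integrand $f\cdot\ind_{\cdot}$ (for $f$ bounded continuous) delivers weak convergence of the restricted f.d.d.

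For part~\eqref{it:restiii}, I exploit the fact, recalled in the excerpt, that $(\cC,d_{\rm FM})$ is \emph{compact}, so the family $\{X_n\}$ is automatically tight and every subsequence has a further weakly convergent subsequence. By part~\eqref{it:restii}, any such subsequential limit $X$ has restricted $\tg$-f.d.d.\ equal to $\mu^{\rm rest}_{t_1,\ldots,t_k}$ for all $(t_1,\ldots,t_k)$ in the dense set $\cT\cap\cI_\tg(X)\cap\cI_\td(X)$; by part~\eqref{it:resti} this determines the law of $X$ uniquely, so all subsequential limits coincide, giving $X_n\Rightarrow X$. The expected main obstacle is part~\eqref{it:restii}: confirming that the indicator in the restricted f.d.d.\ is almost surely continuous under the limit law — i.e., establishing the ``no-mass-at-the-boundary'' property $\P(\td_{t_i}(X)=t_{i+1})=0$ at continuity indices — and handling the joint weak convergence of $(\tg,\td)$-f.d.d., which must be extracted from Proposition~\ref{P:cK+char} applied coordinatewise together with the relation $\td_{t}=\tg_{t+\epsilon}$-type identities in the limit.
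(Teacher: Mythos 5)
Parts \eqref{it:resti} and \eqref{it:restiii} of your plan coincide with the paper's argument: for \eqref{it:resti}, what you describe as ``inclusion--exclusion over $S$'' is exactly the paper's decomposition of the full f.d.d.\ over the possible values of the hit pattern $\mathscr B(X):=\{j:\ X\cap(t_j,t_{j+1}]\neq\emptyset\}$ (the complement of your $S$); and \eqref{it:restiii} is the same compactness-plus-identification argument. The genuine issues lie in \eqref{it:restii}.

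First, the claim that ``$t_{i+1}\in\cI_\td(X)$ forces $\P(t_{i+1}\in X)=0$'' is false. Membership in $\cI_\td(X)$ only rules out positive probability that $t_{i+1}$ is a \emph{right-isolated} point of $X$; it says nothing if $t_{i+1}$ sits in the interior of an interval component (take $X\equiv\{0\}\cup[1,2]$ and $t_{i+1}=3/2$: then $t_{i+1}\in\cI_\td(X)$ but $\P(t_{i+1}\in X)=1$). Your conclusion $\P(\td_{t_i}(X)=t_{i+1})=0$ is nonetheless correct, but for a different reason: $\{\td_{t_i}(X)=t_{i+1}\}=\{t_{i+1}\in X,\ X\cap(t_i,t_{i+1})=\emptyset\}$, which is contained in the event that $t_{i+1}$ is a \emph{left}-isolated point of $X$, and that event is null precisely because $t_{i+1}\in\cI_\tg(X)$ (cf.\ Lemma~\ref{th:basic}\eqref{it:basicii}): the relevant hypothesis is $\cI_\tg$, not $\cI_\td$.

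Second, and more importantly, your route relies on joint weak convergence of the vectors
\begin{equation*}
\big(\tg_{t_1}(X_n),\td_{t_1}(X_n),\ldots,\tg_{t_k}(X_n),\td_{t_k}(X_n)\big)\ \Longrightarrow\ \big(\tg_{t_1}(X),\td_{t_1}(X),\ldots,\tg_{t_k}(X),\td_{t_k}(X)\big),
\end{equation*}
but Proposition~\ref{P:cK+char}\eqref{it:conviii} only gives the marginal convergence of the $\tg$-vector (and, separately, of the $\td$-vector); citing it ``coordinatewise'' does not yield joint convergence. The ``$\td_t=\tg_{t+\epsilon}$-type identities'' you invoke to bridge this do not exist: by right-continuity, $\tg_{t+\epsilon}(C)\downarrow\tg_t(C)$ as $\epsilon\downarrow0$, \emph{not} $\td_t(C)$. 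The fix is to re-run the Skorokhod representation argument directly (exactly as in the proof of Proposition~\ref{P:cK+char}\eqref{it:conviii}): a single coupling with $X_n\to X$ a.s.\ in $\cC$ already gives joint a.s.\ convergence of all $\tg_{t_i}(X_n)$ and $\td_{t_i}(X_n)$ at continuity indices. In fact, the paper sidesteps your boundary-mass analysis in $\bR^{2k}$ entirely: under that coupling it shows $\ind_{A^{X_n}_{t_1,\ldots,t_k}}\to\ind_{A^{X}_{t_1,\ldots,t_k}}$ a.s., using the Fell--Matheron open/closed hitting criteria together with the observations that $t_i\in\cI_\tg(X)$ turns $X\cap(t_{i-1},t_i]\neq\emptyset$ into the open condition $X\cap(t_{i-1},t_i)\neq\emptyset$, and $t_{i-1}\in\cI_\td(X)$ turns $X\cap(t_{i-1},t_i]=\emptyset$ into the compact condition $X\cap[t_{i-1},t_i]=\emptyset$. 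This is cleaner than the Portmanteau route and is why both $\cI_\tg$ and $\cI_\td$ appear in the hypothesis.
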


\begin{remark}\rm
Note that $X \cap (s,t] \ne \emptyset$ if and only if $s \le \td_s(X)
\le \tg_t(X) \le t$.
Recalling the definition \eqref{eq:restr} of the set
$\cR^{(k)}_{t_0, t_1, \ldots, t_k, t_{k+1}}$, the event $A_{t_1,\ldots, t_k}^X$
in \eqref{eq:restricted} can be rewritten as
\begin{equation} \label{eq:restricted2}
	A_{t_1,\ldots, t_k}^X =
	\big\{ (\tg_{t_1}(X), \td_{t_1}(X), \ldots, \tg_{t_k}(X), \td_{t_k}(X)) \in
	\cR^{(k)}_{-\infty, t_1, \ldots, t_k, \infty} \big\} .
\end{equation}
Also note that, in case $X \subseteq [a,b]$ a.s.,
it is enough to specify the restricted f.d.d.\ with indices $t_i \in [a,b]$,
using correspondingly $\cR^{(k)}_{a, t_1, \ldots, t_k, b}$
instead of $\cR^{(k)}_{-\infty, t_1, \ldots, t_k, \infty}$ in \eqref{eq:restricted2}.
\end{remark}

\subsection{The $\alpha$-stable regenerative set}

For each $\alpha \in (0,1)$ there is a universal random closed set
$\btau$ of $[0,\infty)$,
called the \emph{$\alpha$-stable regenerative set}.
Its probability law $\bP^\alpha$ on $\cC$ is then characterized by the following
restricted f.d.d. densities: for all $k\in\N$ and $0 < t_1 < \ldots < t_k < \infty$,
setting $y_0 := 0$ and $C_\alpha:=\frac{\alpha \sin (\pi\alpha)}{\pi}$,
\begin{equation} \label{eq:regfdd}
\begin{split}
	& \frac{\bP^{\alpha} \big(\tg_{t_1}(\btau) \in \dd x_1,\,
\td_{t_1}(\btau) \in \dd y_1, \, \ldots\,,\,
\tg_{t_k}(\btau)\in \dd x_k,\, \td_{t_k}(\btau) \in \dd y_k\big)}
{\dd x_1 \, \dd y_1 \, \cdots \, \dd x_k \, \dd y_k} \\
& \qquad = \, \mathsf{f}_{t_1,\ldots,t_k}^{\alpha}(x_1,y_1,\ldots,x_k,y_k)
\, := \, \prod_{i=1}^{k}
\frac{C_\alpha}{(x_{i} - y_{i-1})^{1-\alpha}
\, (y_i-x_i)^{1+\alpha}}
\,,
\end{split}
\end{equation}
restricting $(x_1,y_1, \ldots, x_k, y_k)$ in the set $\cR^{(k)}_{0, t_1, \ldots, t_k, \infty}
\subseteq \R^{2k}$, cf.\ \eqref{eq:restricted2}-\eqref{eq:restr}.

The $\alpha$-stable regenerative set can be characterized in many ways
(e.g., as the zero level set of a Bessel process
of dimension $\delta := 2(1-\alpha) \in (0,2)$ \cite{RY99}, or
as the closure of the range of the $\alpha$-stable subordinator
\cite{FFM85}). One of the most expressive is
to view it as the \emph{universal scaling limit of discrete renewal processes}:
for any renewal process $\tau:=(\tau_n)_{n\in\N_0}$ on $\N_0$
satisfying \eqref{eq:ass},
the rescaled random set $\tau/N$, cf.\ \eqref{eq:rescN},
viewed as a $\cC$-valued random variable,
converges in distribution as $N\to\infty$ toward the
$\alpha$-stable regenerative set.

This can be shown
using the general theory of \emph{regenerative sets}, cf. \cite{FFM85},
but it is instructive to prove it directly, as an application of
Proposition~\ref{P:cK+char2} (which shows, as a by-product, that the restricted f.d.d.
\eqref{eq:regfdd} define indeed a probability law on $\cC$).
We spell this out in the \emph{conditioned case}, which
is more directly linked to our main results, but the proof for the unconditioned case
is analogous (and actually simpler).

\begin{proposition}[Characterization and universality of $\alpha$-stable regenerative set]
\label{P:univrenew}
Let $\tau:=(\tau_n)_{n\in\N_0}$ be a renewal process on $\N_0$ satisfying \eqref{eq:ass},
for some $\alpha \in (0,1)$. For fixed
$T>0$, the rescaled random set $\tau/N \cap [0,T]$, cf.\ \eqref{eq:rescN},
conditioned on $\lfloor TN\rfloor\in\tau$, converges in distribution
as $N\to\infty$ to the probability law $\bP^{\alpha;\rm c}_T$ of the
$\alpha$-stable regenerative set $\btau \cap [0,T]$
conditioned on $T\in \btau$, cf.\ \eqref{eq:bPnT}.
This law is characterized by the following
restricted f.d.d.: for all $k\in\N$ and $0<t_1<\cdots <t_k< T$,
and for $(x_1,y_1, \ldots, x_k, y_k)$ in the set $\cR^{(k)}_{0, t_1, \ldots, t_k, T} \subseteq \R^{2k}$,
cf.\ \eqref{eq:restricted2} and \eqref{eq:restr},
\begin{equation}\label{btauchar}
\begin{aligned}
& \bP^{\alpha;\rm c}_T\big(\tg_{t_1}(\btau) \in \dd x_1,\,
\td_{t_1}(\btau) \in \dd y_1, \ldots, \tg_{t_k}(\btau)\in \dd x_k,\, \td_{t_k}(\btau) \in \dd y_k\big) \\
& \qquad = \
\mathsf{f}_{T;t_1,\ldots,t_k}^{\alpha;\rm c}(x_1,y_1,\ldots,x_k,y_k) \,:=\,
\mathsf{f}_{t_1,\ldots,t_k}^{\alpha}(x_1,y_1,\ldots,x_k,y_k) \,
\frac{T^{1-\alpha}}{(T-y_k)^{1-\alpha}} \,,
\end{aligned}
\end{equation}
where $\mathsf{f}_{t_1,\ldots,t_k}^{\alpha}(\cdot)$ are
the f.d.d. densities of the $\alpha$-stable regenerative set,
cf.\ \eqref{eq:regfdd}.
\end{proposition}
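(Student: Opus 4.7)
The plan is to apply Proposition~\ref{P:cK+char2}~\eqref{it:restiii} to the sequence $X_N := (\tau/N) \cap [0,T]$ taken under the conditional measure $\P(\,\cdot\,\mid \lfloor NT\rfloor \in \tau)$. I would fix $k\in\N$, $0 < t_1 < \cdots < t_k < T$ and a bounded continuous $F \colon \R^{2k} \to \R$, and show that the integral of $F$ against the restricted $(\tg,\td)$-f.d.d. of $X_N$ converges to the integral of $F$ against $\mathsf{f}^{\alpha;\rm c}_{T;t_1,\ldots,t_k}$ on $\cR^{(k)}_{0,t_1,\ldots,t_k,T}$. By the Markov property of $\tau$ at its points, decomposing on the straddling pair $(a_i,b_i)=(\tau_{n_i},\tau_{n_i+1})$ with $a_i \le \lfloor Nt_i\rfloor < b_i$ gives the discrete renewal expression
\begin{equation}\label{eq:Inbasic}
I_N = \frac{1}{N^{2k}} \!\!\!\!\!\!\sum_{\substack{0 \le a_i \le Nt_i < b_i \le a_{i+1} \\ b_k \le \lfloor NT\rfloor}}\!\!\!\!\!\! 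F\Big(\tfrac{a_1}{N},\tfrac{b_1}{N},\ldots,\tfrac{a_k}{N},\tfrac{b_k}{N}\Big) \prod_{i=1}^{k}\!\big[ N^2\, u(a_i - b_{i-1})\, K(b_i - a_i)\big] \frac{u(\lfloor NT\rfloor - b_k)}{u(\lfloor NT\rfloor)},
\end{equation}
with the conventions $b_0 := 0$ and $a_{k+1} := \lfloor NT\rfloor$.

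Setting $a_i = \lfloor Nx_i\rfloor$, $b_i = \lfloor Ny_i\rfloor$, the renewal asymptotics~\eqref{un} and~\eqref{eq:ass} yield, pointwise on the open region $\cR^{(k)}_{0,t_1,\ldots,t_k,T}$,
\begin{equation*}
N\,u(N(x_i - y_{i-1})) \,\sim\, \frac{C_\alpha\, N^\alpha / L(N)}{(x_i - y_{i-1})^{1-\alpha}}\,, \qquad
N\,K(N(y_i-x_i)) \,\sim\, \frac{L(N)/N^\alpha}{(y_i - x_i)^{1+\alpha}}\,,
\end{equation*}
so that the factors $N^\alpha$ and $L(N)$ cancel pairwise across each pair $(Nu)(NK)$. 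The remaining ratio $u(\lfloor NT\rfloor - \lfloor Ny_k\rfloor)/u(\lfloor NT\rfloor)$ tends to $(T/(T-y_k))^{1-\alpha}$ by slow variation of $L$. Multiplying out, the summand in \eqref{eq:Inbasic} converges to $F \cdot \mathsf{f}^{\alpha;\rm c}_{T;t_1,\ldots,t_k}$, and \eqref{eq:Inbasic} is exactly the Riemann sum on the grid $N^{-1}\Z^{2k}$ for the corresponding Lebesgue integral.

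To upgrade pointwise to integral convergence, I would invoke Potter's bound~\cite[Theorem 1.5.6]{BGT87}: for any $\epsilon \in (0,\alpha)$ there exists $C<\infty$ such that $N\,u(Nz) \le C\,z^{-(1-\alpha+\epsilon)}$ and $N\,K(Nz) \le C\,z^{-(1+\alpha-\epsilon)}$ for all $z \in (0,T]$ and $N$ large. Since $x_i \le t_i < y_i$, the two-dimensional singularity of $(y_i-x_i)^{-(1+\alpha-\epsilon)}$ at the corner $y_i = x_i = t_i$ is integrable in $\R^2$ (change variables $u = t_i - x_i$, $v = y_i - t_i$: then $(u+v)^{-(1+\alpha-\epsilon)}$ is integrable near the origin since $1+\alpha-\epsilon < 2$), and the remaining integrand factors are likewise $L^1$ on $\cR^{(k)}_{0,t_1,\ldots,t_k,T}$. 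Dominated convergence then gives $I_N \to \int F\, \mathsf{f}^{\alpha;\rm c}_{T;t_1,\ldots,t_k}$, and Proposition~\ref{P:cK+char2}~\eqref{it:restiii} produces a $\cC$-valued limit $X_\infty$ with restricted f.d.d. given by \eqref{btauchar}. To identify the law of $X_\infty$ with $\bP^{\alpha;\rm c}_T$ as defined in \eqref{eq:bPnT}, I would compute the unconditional restricted density of $(\tg_{t_1}(\btau), \td_{t_1}(\btau), \ldots, \tg_{t_k}(\btau), \td_{t_k}(\btau), \tg_T(\btau), \td_T(\btau))$ from \eqref{eq:regfdd} with one extra index $t_{k+1}=T$, and observe that disintegration at $\tg_T(\btau) = \td_T(\btau) = T$ produces precisely the extra factor $T^{1-\alpha}/(T-y_k)^{1-\alpha}$, so the two laws agree by Proposition~\ref{P:cK+char2}~\eqref{it:resti}. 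The main obstacle is precisely this uniform domination: the summand is singular both as consecutive pairs merge ($x_i \downarrow y_{i-1}$) and as $(x_i,y_i)$ collapses to $t_i$, and one must combine Potter's bound with the two-dimensional integrability encoded in $\alpha < 1$ to pass cleanly from the Riemann sum to the Lebesgue integral; everything else is bookkeeping.
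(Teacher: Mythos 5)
Your overall strategy coincides with the paper's: apply Proposition~\ref{P:cK+char2}~\eqref{it:restiii}, express $I_N$ via the renewal decomposition \eqref{eq:Inbasic}, obtain pointwise convergence from \eqref{eq:ass} and \eqref{un}, dominate via Potter's bound, and pass to the limit by dominated convergence. The Riemann-sum bookkeeping and the pointwise asymptotics you write are correct.

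However, the domination step contains a genuine gap. The bounds you state, $N\,u(Nz) \le C\,z^{-(1-\alpha+\epsilon)}$ and $N\,K(Nz) \le C\,z^{-(1+\alpha-\epsilon)}$ uniformly in $N$, are false: from \eqref{un} one has $N\,u(Nz) \sim C_\alpha N^\alpha / (L(N)\, z^{1-\alpha}) \to \infty$ and $N\,K(Nz) \sim L(N)/(N^\alpha z^{1+\alpha}) \to 0$ for fixed $z$ as $N\to\infty$, so neither is bounded. Potter's bound must instead be applied to the ratio $L(b_i - a_i)/L(a_i - b_{i-1})$ (and $L(\lfloor NT\rfloor)/L(\lfloor NT\rfloor - b_k)$) that survives after pairing $N^2\,u(a_i - b_{i-1})\,K(b_i - a_i)$, where the $N^\alpha$ and $L(N)$ factors cancel; this produces the dominating function $\prod_i (x_i - y_{i-1})^{-(1-\alpha+\epsilon)}(y_i - x_i)^{-(1+\alpha+\epsilon)}(T-y_k)^{-(1-\alpha+\epsilon)}$ with exponent $1+\alpha+\epsilon$ (not $1+\alpha-\epsilon$). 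More importantly, integrability of this function on $\cR^{(k)}_{0,t_1,\ldots,t_k,T}$ is not ``bookkeeping'': the factors $(y_i - x_i)^{-(1+\alpha+\epsilon)}$ and $(x_i - y_{i-1})^{-(1-\alpha+\epsilon)}$ share variables, so the integral does not factorize into independent one- and two-dimensional pieces. The paper's argument exploits the constraint $t_i - t_{i-1} \ge \eta > 0$ to note that if one gap $x_i - y_{i-1}$ drops below $\eta/3$, then one of the adjacent gaps $y_{i-1} - x_{i-1}$, $y_i - x_i$ must exceed $\eta/3$; splitting the domain according to which gaps are small then lets the integral factor into finitely many products of the basic finite integrals. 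Without some such combinatorial decoupling, ``the remaining integrand factors are likewise $L^1$'' is unjustified.
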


\subsection{Proofs}
\label{sec:appproofs}

We now give the proofs of the results stated in the previous subsections.

\begin{proof}[Proof of Lemma~\ref{th:basic}]
We start proving part \eqref{it:basicii}. Fix $C \in \cC$ and $t \in \R$
and recall Remark~\ref{rem:Mateq}.
If the function $\tg_\cdot(C)$ is \emph{not} continuous at $t$,
i.e.\ $\tg_{t-}(C) < \tg_t(C)$, defining $C_n := C + \frac{1}{n}$
(i.e. translating $C$ to the right by $\frac{1}{n}$)
one has $C_n \to C$ as $n\to\infty$, but
$\tg_t(C_n) = \tg_{t-\frac{1}{n}}(C) + \frac{1}{n} \to \tg_{t-}(C) \ne \tg_t(C)$.
Thus $\tg_t(\cdot)$ is \emph{not} continuous at $C$.

Assume now that $\tg_\cdot(C)$ is continuous at $t$. We
set $s := \tg_t(C)$ and distinguish two cases.

\begin{itemize}

\item
If $s < t$, then $t \not\in C$ by \eqref{eq:gtdt}.
Assume for simplicity that $s > -\infty$ (the case $s = -\infty$ is analogous).
For $\epsilon > 0$ small one has $(s-\epsilon, s+\epsilon) \cap C \ne \emptyset$,
while $[s+\epsilon, t] \cap C = \emptyset$.
If $C_n \to C$, then
$(s-\epsilon, s+\epsilon) \cap C_n \ne \emptyset$ and
$[s+\epsilon, t] \cap C_n = \emptyset$
for large $n$, hence
$\tg_t(C_n) \in (s-\epsilon, s+\epsilon)$. This shows that $\tg_t(C_n) \to \tg_t(C)$, i.e.,
$\tg_t(\cdot)$ is continuous at $C$.

\item If $s = t$,
since $\tg_{t-}(C) = \tg_t(C) = t$,
for every $\epsilon > 0$ one has $(t-\epsilon, t) \cap C \ne \emptyset$.
If $C_n \to C$, then
$(t-\epsilon, t) \cap C_n \ne \emptyset$ for large $n$, hence
$\tg_t(C_n) \in (t-\epsilon, t)$. This shows that $\tg_t(C_n) \to \tg_t(C)$, that is,
$\tg_t(\cdot)$ is continuous at $C$.
\end{itemize}
Thus $\tg_t(\cdot)$ is continuous at $C \in \cC$
if and only if $\tg_\cdot (C)$ is continuous at $t$,
proving part \eqref{it:basicii}.

\smallskip

We now turn to part \eqref{it:basici}.
Defining $G_{t;m,\epsilon}(C) :=
	\frac{1}{\epsilon} \int_t^{t+\epsilon} \max\{\tg_s(C), -m\} \, \dd s$,
we can write
$\tg_t(C) = \lim_{m \to \infty} \lim_{n\to\infty} G_{t;m, 1/n}(C)$
for all $t\in\R$ and $C\in\cC$.
The measurability of $\tg_t(\cdot)$ will follow
if we show that $G_{t;m, \epsilon}(\cdot)$ is continuous, and hence measurable.
If $C_n \to C$ in $\cC$, we know that $\tg_s(C_n) \to \tg_s(C)$
at continuity points $s$ of the non-decreasing function $\tg_\cdot(C)$, hence
for Lebesgue a.e. $s\in\R$.
Since $\tg_s(C) \le s$, dominated convergence
yields $G_{t;m, \epsilon}(C_n) \to G_{t;m, \epsilon}(C)$ as $n\to\infty$,
i.e.\ the function $G_{t;m, \epsilon}(\cdot)$ is continuous on $\cC$.

Finally, setting $\cB' := \sigma((\tg_t)_{t\in\cT})$, where $\cT \subseteq \R$
is a fixed dense set, the measurability of the maps
$\tg_t(\cdot)$ yields $\cB' \subseteq \cB(\cC)$. If we
exhibit measurable maps $\psi_n: (\cC, \cB') \to (\cC, \cB(\cC))$
such that $C = \lim_{n\to\infty} \psi_n(C)$ in $\cC$, for all $C\in\cC$,
it follows that the identity map $\psi(C) := C$ is measurable from
$(\cC, \cB')$ to $(\cC, \cB(\cC))$, as the pointwise limit of $\psi_n$, hence
$\cB(\cC) \subseteq \cB'$.

Extracting a \emph{countable} dense set $\{t_i\}_{i\in\N} \subseteq \cT$,
we define $\psi_n(C) := \{\tg_{t_1}(C), \ldots, \tg_{t_n}(C)\} \cap \R$,
so that $\psi_n(C)$ is a finite subset of $\R$ and $\psi_n(C) \to C$ in $\cC$.
Since $\tg_t(\cdot)$ is a measurable map from $(\cC, \cB')$ to $\bR$ and
$(x_1, \ldots, x_n) \mapsto \{x_1, \ldots, x_k\} \cap \R$ is a continuous,
hence measurable, map from ${\bR}^k$ to $(\cC, \cB(\cC))$, it follows that
$\psi_n: (\cC, \cB') \to (\cC, \cB(\cC))$ is measurable.
\end{proof}

\begin{proof}[Proof of Proposition~\ref{P:cK+char}.]

Since the path $t \mapsto \tg_t(X)$ is increasing and right-continuous,
its discontinuity points $t$, at which $\tg_t(X) \ne \tg_{t-}(X)$,
are at most countably many, a.s..
The corresponding fact that $\P(\tg_t(X) \ne \tg_{t-}(X)) > 0$ is possible for at most
countably many $t$ follows by a classilcal argument, see
e.g.\ \cite[Section 13]{B99}. This proves part~\eqref{it:convi}.

\smallskip

The proof of part \eqref{it:convii} is an easy consequence of Lemma~\ref{th:basic}.
A generator for the Borel $\sigma$-algebra $\cB(C)$ is given by
sets of the form $\{C\in\cC: \ \tg_{t_1}(C) \in A_1, \ldots, \tg_{t_k}(C) \in A_k\}$,
for $k\in\N$, $t_1,\ldots, t_k \in \cT$ and $A_1, \ldots, A_k$ Borel
subsets of $\cB$. Note that such sets are a $\pi$-system,
i.e.\ they are closed under finite intersections.
It is then a standard result that any probability on $(\cC, \cB(C))$
--- in particular, the distribution of any $\cC$-valued random variable $X$ ---
is characterized
by its values on such sets, i.e.\ by its finite-dimensional distributions.

\smallskip

We now turn to part \eqref{it:conviii}.
If $X_n \Rightarrow X$ on $\cC$,
by Skorokhod's Representation Theorem \cite[Th. 6.7]{B99} we can couple $X_n$ and
$X$ so that a.s. $X_n \to X$ in $\cC$.
If $t_i, \ldots, t_k \in \cI_\tg(X)$, the maps $\tg_{t_i}(\cdot)$
are a.s. continuous at $X$, by Lemma~\ref{th:basic}~\eqref{it:basicii}, hence
one has the a.s. convergence
$(\tg_{t_1}(X_n), \ldots,
\tg_{t_k}(X_n)) \to (\tg_{t_1}(X), \ldots,
\tg_{t_k}(X))$, which implies weak convergence of the f.d.d..

\smallskip

We finally prove part \eqref{it:conviiii}.
Since $\cC$ is a compact Polish space, every sequence
$(X_n)_{n\in\N}$ of $\cC$-valued random variables is tight,
and hence relatively compact for the topology of convergence
in distribution \cite[Th. 2.7]{B99}.
We can then extract a subsequence $X_{n_k}$ converging in distribution to
some $\cC$-valued random variable $X$.
To show that the whole sequence $X_n$ converges to $X$,
by \cite[Th. 2.6]{B99} it is enough
to show that for any other converging subsequence $X_{n'_k} \Rightarrow X'$,
the random variables $X$ and $X'$ have the same distribution.

By assumption, the f.d.d. of $X_n$ with indices $t_1,\ldots, t_k$ in
a set $\cT \subseteq \R$ with full Lebesgue measure converge to
$\mu_{t_1,\ldots, t_k}$.
Since $X_{n_k} \Rightarrow X$,
the f.d.d. of $X$ are given by $\mu_{t_1,\ldots, t_k}$
for indices in $\cT \cap \cI_\tg(X)$, by part~\eqref{it:conviii}; analogously,
the f.d.d. of $X'$ are given by $\mu_{t_1,\ldots, t_k}$
for indices in $\cT \cap \cI_\tg(X')$.
Thus $X$ and $X'$ have the same f.d.d. with indices in
$\cT \cap \cI_\tg(X) \cap \cI_\tg(X')$. Since this set is dense,
$X$ and $X'$ have the same distribution,
by Proposition~\ref{P:cK+char}~\eqref{it:convii}.
\end{proof}

\begin{proof}[Proof of Proposition~\ref{P:cK+char2}]

We start proving part \eqref{it:resti}.
Fix a $\cC$-valued random variable $X$ and
let $\mu_{t_1, \ldots, t_k}(\dd x_1, \ldots, \dd x_k)$ be
the $\tg$-f.d.d. of $X$, i.e. the law on $\bR^k$ of $(\tg_{t_1}(X), \ldots, \tg_{t_k}(X))$.
Analogously, let $\mu^{\rm rest}_{t_1, \ldots, t_k}(\dd x_1, \ldots, \dd x_k)$
be the restricted $\tg$-f.d.d. of $X$, cf. \eqref{eq:restricted}-\eqref{eq:restricted2}.
Since $X \cap (s,t] \ne \emptyset$ if and only if $\tg_t(X) \in (s,t]$, the restricted f.d.d.
$\mu^{\rm rest}_{t_1, \ldots, t_k}$ is just
the f.d.d. $\mu_{t_1, \ldots, t_k}$ restricted on the subset
$[-\infty,t_1] \times (t_1,t_2] \times \ldots \times (t_{k-1},t_k]$:
\begin{equation}\label{eq:restrmu}
	\mu^{\rm rest}_{t_1, \ldots, t_k}(\dd x_1, \ldots, \dd x_k)
	= \mu_{t_1, \ldots, t_k}(\dd x_1, \ldots, \dd x_k)
	\prod_{i=2}^k \ind_{\{x_i \in (t_{i-1},t_i]\}} .
\end{equation}

To prove that the f.d.d. can be recovered from the restricted f.d.d., we show that
$\mu_{t_1, \ldots, t_k}(\cdot)$ can be written as a mixture of
$\mu^{\rm rest}_{I}(\cdot)$, as $I$ ranges in the subsets of
$\{t_1, \ldots, t_k\}$. For $k=1$ there is nothing to prove, since $\mu_{t_1} = \mu^{\rm rest}_{t_1}$,
so we assume $k\ge 2$ henceforth.
We associate to $X$ the (possibly empty) subset
$\mathscr B(X) \subseteq \{1, \ldots, k-1\}$ defined by
\begin{equation*}
	\mathscr B(X) \,:=\, \big\{ j \in \{1, \ldots, k-1\}: \ X \cap (t_{j}, t_{j+1}] \ne \emptyset
	\big\} \,.
\end{equation*}
Performing a decomposition over the possible values of $\mathscr B(X)$, we can write
\begin{equation} \label{eq:muQdecomp}
	\mu_{t_1, \ldots, t_k}(\dd x_1, \ldots, \dd x_k)
	\,= \!\!\! \sum_{B \subseteq \{2,\ldots, k\}} \!\!\!
	\P(\tg_{t_1}(X) \in \dd x_1,\, \ldots,\, \tg_{t_k}(X) \in \dd x_k , \, \mathscr B(X) = B) \,.
\end{equation}
It remains to express each term in the right hand side in terms of restricted f.d.d..

We first consider the case $B = \emptyset$.
On the event $\{\mathscr B(X) = \emptyset\}=\{X \cap (t_1, t_k] = \emptyset\}$ we have
$\tg_{t_1}(X) = \cdots = \tg_{t_k}(X) \le t_1$. Recalling that
$\mu_{t} = \mu^{\rm rest}_{t}$, we can write
\begin{equation*}
\begin{split}
	& \P(\tg_{t_1}(X) \in \dd x_1,\, \ldots,\, \tg_{t_k}(X) \in \dd x_k , \, \mathscr B(X) = \emptyset)\\
	& \quad =\ \P(\tg_{t_k}(X) \in \dd x_k) \ \ind_{\{x_k \le t_1\}} \,
	\prod_{i=1}^{k-1}
	\delta_{x_k}(\dd x_i)
	\ = \ \mu^{\rm rest}_{t_k}(\dd x_k) \ \ind_{\{x_k \le t_1\}} \,
	\prod_{i=1}^{k-1}
	\delta_{x_k}(\dd x_i) ,
\end{split}
\end{equation*}
and this expression depends only on the restricted f.d.d..

If $B \ne \emptyset$, we can write $B = \{j_1, \ldots, j_\ell\}$ with $1 \le \ell \le k-1$
and $1 \le j_1 < \ldots < j_\ell \le k-1$.
Let us also set $j_0 := 0$ and $j_{\ell + 1} := k$.
On the event $\{\mathscr B = B\}$,
we have $\tg_{t_{j_{n-1}+1}}(X) = \tg_{t_{j_{n-1}+2}}(X) = \ldots = \tg_{t_{j_n}}(X)
\in (t_{j_{n-1}}, t_{j_{n-1}+1}]$, for every $n=1, \ldots, \ell + 1$, therefore
\begin{equation}\label{eq:aassa}
\begin{split}
	& \P(\tg_{t_1}(X) \in \dd x_1,\, \ldots,\, \tg_{t_k}(X) \in \dd x_k ,
	\, \mathscr B(X) = \{j_1, \ldots, j_\ell\}) \\
	& = \P\Bigg( \bigcap_{n=1}^{\ell+1} \big\{ \tg_{t_{j_{n}}}(X) \in \dd x_{j_{n}} \big\} \Bigg)
	\Bigg\{\prod_{n=1}^{\ell+1}
	\ind_{\{x_{j_{n}} \in (t_{j_{n-1}}, t_{j_{n-1}+1}]\}}
	\Bigg(\prod_{m=j_{n-1}+1}^{j_{n} - 1}
	\!\delta_{x_{j_{n}}}( \dd x_{m}) \Bigg) \Bigg\} ,
\end{split}
\end{equation}
where we set $(t_0, \,\cdot\,] := [-\infty, \,\cdot\,]$ and
the product over $m$ equals one when $j_{n} - j_{n-1} = 1$.
The first term in the right hand side of \eqref{eq:aassa} is the f.d.d.
$\mu_{t_{j_1}, \ldots, t_{j_{\ell+1}}}(\dd x_{j_1}, \ldots, \dd x_{j_{\ell+1}})$.
However, by \eqref{eq:restrmu}, \emph{this coincides with the restricted f.d.d.}
$\mu^{\rm rest}_{t_{j_1}, \ldots, t_{j_{\ell+1}}}(\dd x_{j_1}, \ldots, \dd x_{j_{\ell+1}})$,
because each variable $x_{j_n}$ is restricted on $(t_{j_{n-1}}, t_{j_{n-1}+1}]
\subseteq (t_{j_{n-1}}, t_{j_{n}}]$. Part \eqref{it:resti} is thus proved.

\smallskip

For part \eqref{it:restii},
we proceed as in the proof of Proposition~\ref{P:cK+char}~\eqref{it:conviii}.
If $X_n \Rightarrow X$ on $\cC$, we couple $X_n$ and
$X$ so that a.s. $X_n \to X$ in $\cC$,
by Skorokhod's Representation Theorem.
On the event that $\tg_\cdot(X)$ is continuous at $t$,
if $X \cap (s,t] \ne \emptyset$
then also $X \cap (s,t) \ne \emptyset$, which implies
$X_n \cap (s,t) \ne \emptyset$ for large $n$ (cf.\ Remark~\ref{rem:Mateq}).
Analogously, on the event that $\td_\cdot(X)$ is continuous at $s$,
if $X \cap (s,t] = \emptyset$
then also $X \cap [s,t] = \emptyset$, which implies
$X_n \cap [s,t] = \emptyset$ for large $n$.
Therefore, if $t_1, \ldots, t_k \in \cI_\tg(X) \cap \cI_\td(X)$
and $f:\bR^k \to \R$ is bounded and continuous,
\begin{equation*}
\begin{split}
	& f\big( \tg_{t_1}(X_n), \ldots, \tg_{t_k}(X_n) \big)
	\ind_{\{X_n \cap (t_{i-1},t_{i}] \ne \emptyset,
	\ \forall i = 2, \ldots, k\} } \\
	& \qquad \qquad \qquad \qquad \xrightarrow[\ n\to\infty \ ]{\text{a.s.}}
	f\big( \tg_{t_1}(X), \ldots, \tg_{t_k}(X) \big)
	\ind_{\{X \cap (t_{i-1},t_{i}] \ne \emptyset,
	\ \forall i = 2, \ldots, k\} } .
\end{split}
\end{equation*}
Taking expectations of both sides, dominated convergence
shows that the restricted f.d.d. of $X_n$
with indices in $\cI_\tg(X) \cap \cI_\td(X)$
converge weakly toward the restricted f.d.d. of $X$.

\smallskip

Finally, the proof of part \eqref{it:restiii}
is analogous to that of
Proposition~\ref{P:cK+char}~\eqref{it:conviiii}.
Any sequence $X_n$
of $\cC$-valued random variable is tight, hence it suffices to show that
if $X_{n_k}$, $X_{n'_k}$ are subsequences converging in distribtion
to $X$, $X'$ respectively, then $X$ and $X'$ have the same law.
Since the restricted $\tg$-f.d.d. of $X_n$ with indices in $\cT$ converge,
$X$ and $X'$ have the same restricted
$\tg$-f.d.d. with indexes in the dense set $\cT \cap \cI_\tg(X) \cap
\cI_\td(X) \cap \cI_\tg(X') \cap \cI_\td(X')$, by part \eqref{it:restii}. It follows by part
\eqref{it:resti} that $X$ and $X'$ have the same law.
\end{proof}

\begin{proof}[Proof of Proposition~\ref{P:univrenew}]
By Proposition~\ref{P:cK+char2}~\eqref{it:restiii}, it is enough
to prove the convergence of restricted f.d.d.: for all $k\in\N$, $0 < t_1 < \ldots < t_k < T$
and for every bounded and continuous function
$F: \R^{2k} \to \R$, recalling \eqref{eq:restricted}-\eqref{eq:restricted2}
and \eqref{eq:PN}, we show that
the integral
\begin{equation}\label{eq:topro}
\begin{split}
	I_N := \E_N^{\rm c} \Big[ F\big(\tg_{t_1}(\tau/N), &
	\, \td_{t_1}(\tau/N), \ldots, \tg_{t_k}(\tau/N), \td_{t_k}(\tau/N)\big)
	\ind_{A_{t_1,\ldots, t_k}^{\tau/N}} \Big]
\end{split}
\end{equation}
converges as $N\to\infty$ to the integral of $F$
with respect to the density $\mathsf{f}_{T;t_1,\ldots,t_k}^{\alpha;\rm c}$ in \eqref{btauchar}, i.e.
\begin{equation} \label{eq:intI}
\begin{split}
	I := \idotsint\limits_{0<x_1<t_1<y_1<x_2<t_2\atop <\cdots <x_k<t_k<y_k<T}
	& F(x_1,y_1,\ldots, x_k, y_k) \\
	& \times \Bigg[ \prod_{i=1}^{k}
	\frac{C_\alpha}{(x_{i} - y_{i-1})^{1-\alpha}
	(y_i-x_i)^{1+\alpha}} \Bigg] \frac{T^{1-\alpha}}{(T-y_k)^{1-\alpha}} \, \dd x \, \dd y \,,
\end{split}
\end{equation}
where $y_0 := 0$ and
$\dd x \, \dd y$ is a shorthand for $\dd x_1 \dd y_1 \cdots \dd x_k \dd y_k$.

Recall that $u(i) := \P(i\in \tau)$ and $K(i):=\P(\tau_1=i)$. A renewal decomposition yields
\begin{equation} \label{eq:Inbasic}
\begin{split}
	I_N = \frac{1}{N^{2k}} \sum_{0 \le a_1 \le Nt_1} \, \sum_{N t_1 < b_1 \le a_2 \le Nt_2}
	\cdots & \sum_{N t_{k-1} < b_{k-1} \le a_k \le Nt_k} \, \sum_{N t_k < b_k \le NT}
	F\Big(\frac{a_1}{N}, \frac{b_1}{N}, \ldots, \frac{a_k}{N}, \frac{b_k}{N}\Big) \\
	& \times \Bigg[ \prod_{i=1}^{k}
	N^2 \, u(a_i - b_{i-1}) K(b_i - a_i) \Bigg] \frac{u(\lfloor NT\rfloor-b_k)}{u(\lfloor NT\rfloor)} ,
\end{split}
\end{equation}
where we write the factor $\frac{1}{N^{2k}}$ (which cancels with the product of the $N^2$
inside the square brackets) to make $I_N$ appear as a Riemann sum.
Setting $x_i = a_i/N$, $y_i = b_i/N$ for $1\leq i\leq k$,
the summand converges pointwise to the integrand in \eqref{eq:intI}, since
by \eqref{eq:ass} and \eqref{un}
\begin{equation*}
	\forall\, z > 0: \qquad
	K(\lfloor Nz \rfloor)\sim \frac{L(Nz)}{(Nz) ^{1+\alpha}}, \qquad
	u(\lfloor Nz\rfloor)\sim \frac{C_\alpha}{L(Nz) (Nz)^{1-\alpha}}, \qquad
	\text{as } N\to\infty .
\end{equation*}

To conclude that $I_N$ converges to the integral $I$ in \eqref{eq:intI},
we provide a suitable domination.
By Potter bounds \cite[Theorem 1.5.6]{BGT87},
for every $\epsilon > 0$ there is a constant $D_\epsilon$ such that
$L(m)/L(\ell) \le D_\epsilon \max\{ (m/\ell)^{\epsilon}, (\ell/m)^{\epsilon}\}$
for all $\ell,m\in\N$. Since $(y_{i}-x_{i}) \le T$, $(x_{i} - y_{i-1}) \le T$ and
$\max\{\alpha,\beta\} \le \alpha\beta$ for
$\alpha,\beta \ge 1$, we can write
\begin{align*}
	\frac{L(b_i-a_i)}{L(a_i-b_{i-1})} & \le
	D_\epsilon \max \bigg\{ \frac{ (y_{i}-x_{i})^{\epsilon}}{(x_{i} - y_{i-1})^{\epsilon}},
	\frac{(x_{i} - y_{i-1})^{\epsilon}}{ (y_{i}-x_{i})^{\epsilon}} \bigg\}
	\le \frac{D_\epsilon \, T^{2\epsilon}}{(x_{i} - y_{i-1})^{\epsilon}
	(y_{i}-x_{i})^{\epsilon}} ,\\
	\frac{L(\lfloor NT\rfloor)}{L(\lfloor NT\rfloor-b_k)} & \le
	D_\epsilon \frac{ T^{\epsilon}}{(T - y_{k})^{\epsilon}} .
\end{align*}
It follows that, for every $\epsilon > 0$, the summand in $I_N$ is bounded
uniformly in $N$ by
\begin{equation} \label{eq:domination}
	C(T,\epsilon)
	\Bigg[ \prod_{i=1}^{k}
	\frac{1}{(x_{i} - y_{i-1})^{1-\alpha+\epsilon}
	(y_i-x_i)^{1+\alpha+\epsilon}} \Bigg] \frac{1}{(T-y_k)^{1-\alpha+\epsilon}} .
\end{equation}

It remains to show that, if we choose $\epsilon > 0$ sufficiently small,
this function has finite integral over the domain of integration in \eqref{eq:intI}.
Let us set $\eta := \min_{i=1,\ldots,k+1} (t_{i} - t_{i-1})$ and
\begin{equation*}
	\delta_i := x_i - y_{i-1} \quad \text{for } i=1,\ldots,k+1, \qquad
	\delta'_i := y_i - x_i \quad \text{for } i=1,\ldots,k,
\end{equation*}
where $t_0 = y_0 := 0$ and $x_{k+1} = t_{k+1} := T$.
Each of the quantities $\delta_i$, $\delta'_i$ can be smaller or
larger than $\eta/3$, and we split the integral of \eqref{eq:domination} accordingly,
as a sum of $2^{2k+1}$ terms.
Note that if $\delta_i < \eta/3$, either
$\delta'_{i-1}$ or $\delta'_i$ must exceed $\eta/3$ (because
$x_{i-1} < t_{i-1} < t_i < y_i$ and $t_{i} - t_{i-1} \ge \eta$).
Whenever any $\delta_i$ or $\delta'_i$ exceeds $\eta/3$, we replace
them by $\eta/3$, getting an upper bound. This
yields a factorization into a product of just four kind of basic integrals, i.e.
\begin{equation*}
	\idotsint\limits_{ y_{i-1} < x_i < t_i < y_i < x_{i+1}\atop
	x_i- y_{i-1} < \frac{\eta}{3},\ y_i-x_i < \frac{\eta}{3},\ x_{i+1}-y_i < \frac{\eta}{3}}
	\!\!\!\!\!\!\!\!\!\!\!\!\!\!
	\frac{\dd y_{i-1}}{(x_i - y_{i-1})^{1-\alpha+\epsilon}}
	\frac{\dd x_{i} \, \dd y_i}{(y_i - x_{i})^{1+\alpha+\epsilon}}
	\frac{\dd x_{i+1}}{(x_{i+1} - y_{i})^{1-\alpha+\epsilon}},
\end{equation*}
and the analogous ones without the integration over $y_{i-1}$ and/or $x_{i+1}$.
The finiteness of such integrals is easily checked if $\epsilon > 0$ is small (so that
$1-\alpha+\epsilon < 1$ and $1+\alpha+\epsilon < 2$).
\end{proof}

\section{Renewal estimates}\label{app:renew_est}
\label{sec:dimo}
In this section we show that assumption \eqref{eq:renass0+} holds true for renewal processes
satisfying \eqref{eq:ass}, under either of the following assumptions:
\begin{itemize}
\item $\alpha > 1$ (in particular, the renewal process has finite mean);
\item the renewal process is generated by a Bessel-like Markov chain, in the spirit of \cite{A11}.
\end{itemize}
Actually, it can be shown that condition \eqref{eq:renass0+} is satisfied in much greater generality,
e.g.\ when one has an analogue of \eqref{eq:renass0+}, but
with $u(\cdot)$ replaced by the renewal kernel $K(\cdot)$.
This renewal-theoretic framework falls out of the scope of the present work and
will be treated elsewhere.

Let us, now, verify \eqref{eq:renass0+} in the two cases, mentioned above.
Note that the precise value of $\epsilon > 0$ therein is immaterial:
if $\epsilon'n \le \ell \le \epsilon n$ for
$0 < \epsilon' < \epsilon$, relation \eqref{eq:renass0+} is always satisfied
(with $\delta = 1$), as it follows by \eqref{un}. We then set $\epsilon = \frac{1}{4}$
for simplicity and rewrite \eqref{eq:renass0+} as
\begin{equation}\label{eq:renass00+}
	\exists C, n_0 \in (0,\infty), \ \delta \in (0,1]: \qquad
	\bigg| \frac{u(n+\ell)}{u(n)} - 1 \bigg| \le C \bigg(\frac{\ell}{n}\bigg)^\delta
	\quad \ \forall n \ge n_0, \ 0 \le \ell \le \frac{n}{4} \,.
\end{equation}

\begin{lemma}\label{alhpa>1}
Let $\tau$ be a non-terminating renewal process
satisfying \eqref{eq:ass}, with $\alpha>1$. Then \eqref{eq:renass00+} holds true.
\end{lemma}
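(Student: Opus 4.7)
My strategy is to use a quantitative version of the renewal theorem: since $\alpha>1$ means $\mu:=\E[\tau_1]<\infty$, the renewal function $u(n)$ converges to $1/\mu$, and with a polynomial tail on $K$ this convergence itself happens at a polynomial rate. Combined with the fact that $u(n)$ is bounded away from $0$, this controls $u(n+\ell)/u(n)-1$ uniformly in $\ell$, which in turn implies \eqref{eq:renass00+}.

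Concretely, I would proceed in three steps. First, since $K(n)>0$ for all $n$ (aperiodicity) and $\mu<\infty$, Blackwell's renewal theorem gives $u(n)\to 1/\mu>0$, so there exist $c>0$ and $n_1\in\N$ with $u(n)\geq c$ for all $n\geq n_1$. Second, I would invoke the classical quantitative renewal estimate: under \eqref{eq:ass} with $\alpha>1$ one has
\begin{equation*}
\Big|u(n)-\tfrac{1}{\mu}\Big| \;\leq\; C_1\,\varphi(n)\qquad\text{for all } n\geq n_1,
\end{equation*}
where $\varphi(n)=L(n)/n^{\alpha-1}$ when $1<\alpha\leq 2$ and $\varphi(n)=1/n$ when $\alpha>2$ (this follows for instance from Stone's decomposition, or from the tail identity $u(n)-1/\mu\sim -\mu^{-2}\sum_{k>n}\bar F(k)$ combined with $\bar F(k)\sim L(k)/(\alpha k^\alpha)$). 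Applying Potter bounds \cite[Thm.~1.5.6]{BGT87} to absorb the slowly varying factor, for any $\gamma\in(0,\min\{1,\alpha-1\})$ there is $C_2\in(0,\infty)$ with $\varphi(n)\leq C_2 n^{-\gamma}$ for $n\geq n_1$.

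Third, I would conclude via the triangle inequality: for every $n\geq n_1$ and $\ell\geq 0$,
\begin{equation*}
|u(n+\ell)-u(n)|\;\leq\;\Big|u(n+\ell)-\tfrac{1}{\mu}\Big|+\Big|u(n)-\tfrac{1}{\mu}\Big|\;\leq\;\frac{2C_1C_2}{n^{\gamma}},
\end{equation*}
hence $|u(n+\ell)/u(n)-1|\leq (2C_1C_2/c)\,n^{-\gamma}$. For $\ell=0$ the left-hand side of \eqref{eq:renass00+} is $0$, while for $1\leq\ell\leq n/4$ one has $\ell^{\gamma}\geq 1$, so $n^{-\gamma}\leq(\ell/n)^{\gamma}$, giving \eqref{eq:renass00+} with $\delta=\gamma$.

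\textbf{Main obstacle.} The only non-routine input is the polynomial rate $\varphi(n)$ for $|u(n)-1/\mu|$. For $\alpha>2$ this is essentially textbook (the renewal measure converges at rate $O(1/n)$ under a second-moment condition), but for $1<\alpha\leq 2$ the slowly varying factor must be handled carefully; the authors presumably either cite the appropriate refined renewal theorem or derive the bound directly from the renewal equation $u(n)=\sum_{k=1}^{n}K(k)u(n-k)$ by a short induction exploiting the regular variation of $K$.
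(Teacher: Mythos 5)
Your proposal is correct and rests on essentially the same renewal-theoretic input as the paper, namely a Rogozin/Stone-type quantitative renewal theorem giving $u(n)-1/\mu = \mu^{-2}\sum_{k>n}\P(\tau_1>k) + R_n$ with a polynomially decaying remainder. The difference is in the execution: the paper differences this decomposition directly, so that only the short sum $\sum_{k=n+1}^{n+\ell}\P(\tau_1>k)$ appears (together with $R_n, R_{n+\ell}$), giving a bound of order $\ell/n^{\alpha'} + n^{-2(\eta-1)}$; you instead bound $|u(n)-1/\mu|$ and $|u(n+\ell)-1/\mu|$ separately by $O(n^{-\gamma})$ and invoke the triangle inequality, then promote $n^{-\gamma}$ to $(\ell/n)^{\gamma}$ using only $\ell\geq 1$. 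Your route is cruder — it forgoes the cancellation in the tail sums, so the admissible $\delta$ you obtain ($<\min\{1,\alpha-1\}$) is somewhat smaller than the paper's ($=1\wedge(2\eta-2)$ with $\eta$ up to $\alpha$) when $\alpha$ is close to $1$ — but this is immaterial for the lemma, which only requires some $\delta>0$. One minor slip: the sign in your tail identity should be $+\mu^{-2}\sum_{k>n}\bar F(k)$, not $-$, though this does not affect the absolute-value estimate.
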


\begin{proof}
In the case $\alpha>1$, by \eqref{eq:ass}
we have $\E[\tau_1^\eta]<\infty$ for some $\eta > 1$.
We can then apply the following result of Rogozin~\cite{R73}:
\begin{equation*}
	u(n) = \p(n\in \tau)
	= \frac{1}{\e[\tau_1]} + \frac{1}{\e[\tau_1]^2} \sum_{k=n+1}^\infty \p(\tau_1> k) + R_n,
\end{equation*}
where $R_n=o(n^{-2(\eta-1)})$
if $\eta \in (1,2)$, and $R_n=o(n^{-\eta})$ if $\eta\geq 2$,
as $n\to\infty$. Note that,
for any $\alpha' \in (1, \alpha)$, by \eqref{eq:ass}
we can choose $C$ such that for all $k\in\N_0$
$$
\P(\tau_1 > k) = \sum_{n=k+1}^\infty \frac{L(n)}{n^{1+\alpha}}
\leq \sum_{n=k+1}^\infty \frac{C}{n^{1+\alpha'}} \leq \frac{C'}{k^{\alpha'}}.
$$
It follows that for $n\in\N$ large enough and $0 \le \ell \le \frac{n}{4}$
$$
\begin{aligned}
\bigg| \frac{u(n+\ell)}{u(n)} - 1 \bigg| & = \bigg| \frac{u(n) - u(n+\ell)}{u(n)} \bigg|
\leq C \Big(\sum_{k=n+1}^{n+\ell} \P(\tau_1> k) + \frac{1}{n^{2(\eta-1)}}
+ \frac{1}{(n+\ell)^{2(\eta-1)}}\Big) \\
& \leq C' \Big( \frac{\ell}{n^{\alpha'}} + \frac{1}{n^{2(\eta-1)}}\Big) \leq
C''\Big(\frac{\ell}{n}\Big)^{1 \wedge (2\eta-2)}.
\end{aligned}
$$
This establishes \eqref{eq:renass00+} with
$\delta=1 \wedge (2\eta-2)$.
\end{proof}

\begin{lemma}\label{lem:Bessel1}
Let $\tau$ be a non-terminating renewal process
satisfying \eqref{eq:ass}, with $\alpha\in(0,1)$, such that $2\tau_1$ has the same distribution
as the first return to zero of a nearest-neighbor Markov chain on $\N_0$
with $\pm1$ increments (cf.\ Remark~\ref{R:Ken}). Then
\eqref{eq:renass00+} holds true.
\end{lemma}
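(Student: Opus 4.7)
The plan is to exploit the Markov chain structure by writing $u(n) = p_{2n}(0,0)$, where $p_t(x,y) := \P_x(X_t = y)$ denotes the $t$-step transition kernel of the Bessel-like chain $X$, and then control how this probability varies under a time shift $n \mapsto n+\ell$. Applying the Markov property at time $2\ell$ and using $\sum_k p_{2\ell}(0,2k) = 1$ gives the basic decomposition
\begin{equation*}
u(n+\ell) - u(n) \,=\, \sum_{k \ge 0} p_{2\ell}(0, 2k) \, \bigl[ p_{2n}(2k, 0) - p_{2n}(0, 0) \bigr],
\end{equation*}
where only even indices appear because the chain has $\pm 1$ increments. This reduces the task to estimating the difference $\Delta_n(k) := p_{2n}(2k, 0) - p_{2n}(0, 0)$ for $k \ge 0$, which measures how the $2n$-step return probability to the origin changes when the starting point is shifted from $0$ to $2k$.

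The second step is a quantitative local limit theorem for the Bessel-like chain, to be extracted from Alexander's sharp transition-probability estimates in \cite{A11}. The expected form is
\begin{equation*}
p_{2n}(2k, 0) \,=\, u(n) \, \Phi\!\bigl( k/\sqrt{n} \bigr) \, \bigl( 1 + r_n(k) \bigr) \qquad \text{as } n \to \infty,
\end{equation*}
with $\Phi : [0, \infty) \to \R$ a H\"older continuous profile satisfying $\Phi(0) = 1$ and $|\Phi(r) - 1| \le C\, r^{2\delta_0}$ near $r = 0$ for some $\delta_0 > 0$, and with a polynomial-in-$(1/\sqrt{n})$ remainder $r_n(k)$ uniform on compact sets. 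The profile $\Phi$ arises from the transition density of the limiting Bessel process of dimension $\delta = 2(1-\alpha)$, transferred to hitting-at-origin via reversibility of the chain (where the reversible measure $\pi$ for the Bessel-like walk satisfies $\pi(x) \sim c\, x^{2\alpha - 1}$). Combined with the trivial bound $|\Delta_n(k)| \le 2 u(n)$ coming from the uniform control $p_{2n}(2k,0) \le C u(n)$ on the full range, this delivers the pointwise estimate
\begin{equation*}
|\Delta_n(k)| \,\le\, C \, u(n) \, \bigl( (k/\sqrt n)^{2\delta_0} \wedge 1 \bigr).
\end{equation*}

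The final step is resummation. Standard moment bounds for Bessel-like walks (cf.\ \cite{A08}) give $\E_0[X_{2\ell}^2] \le C \ell$, so that Jensen's inequality on the range $k \le \sqrt{n}$ and Chebyshev on the tail $k > \sqrt{n}$ (where we use the hypothesis $\ell \le n/4$, so that $\sqrt{n}$ is well beyond a standard deviation of $X_{2\ell}$) yield
\begin{equation*}
\sum_{k \ge 0} p_{2\ell}(0, 2k) \, \bigl( (k/\sqrt n)^{2\delta_0} \wedge 1 \bigr) \,\le\, C \, (\ell/n)^{\delta_0},
\end{equation*}
and consequently $|u(n+\ell) - u(n)| \le C\,u(n)\,(\ell/n)^{\delta_0}$, which is precisely \eqref{eq:renass00+} with $\delta = \delta_0$. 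The main obstacle will be extracting, from Alexander's framework, the quantitative local limit theorem with an explicit H\"older-type rate rather than a mere $o(1)$ error; this requires tracking the convergence of the rescaled chain to the Bessel process uniformly in the starting point, including spatial H\"older regularity at the origin of the limiting profile, which is implicit in the spatial-harmonic-function machinery of \cite{A11} but demands careful bookkeeping of the error terms.
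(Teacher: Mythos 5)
Your plan is genuinely different from the paper's proof, and at its core it has a gap that you yourself flag but do not close: the entire argument hinges on a \emph{quantitative} local limit theorem $p_{2n}(2k,0) = u(n)\,\Phi(k/\sqrt n)\,(1+r_n(k))$ with a H\"older-continuous profile near $0$ and a polynomially small error $r_n(k)$ uniformly over $k\lesssim\sqrt n$. This is not a bookkeeping exercise. Alexander's results in \cite{A11} give asymptotic equivalences (i.e.\ a mere $o(1)$ multiplicative error) for the transition probabilities and for the renewal mass function $u(n)$; upgrading a $o(1)$ error in a local limit theorem to a power-law rate, \emph{uniformly in the starting point}, is a substantial additional piece of analysis that is neither stated in \cite{A11} nor straightforward to extract from its harmonic-function machinery. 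Without that input, your pointwise bound $|\Delta_n(k)|\le C\,u(n)\,((k/\sqrt n)^{2\delta_0}\wedge 1)$ is unsupported, and the rest of the argument (Jensen on the bulk, Chebyshev on the tail, which is fine arithmetically) rests on air. There is also a small sign slip in the side remark about reversibility: for $\delta = 2(1-\alpha)$ the speed measure behaves like $x^{\delta-1}=x^{1-2\alpha}$, not $x^{2\alpha-1}$; this does not enter the main line but signals that the ``transfer via reversibility'' needs more care than the sketch suggests.

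For contrast, the paper takes a far more elementary route that entirely avoids any local limit theorem. It runs \emph{two} copies $X,Y$ of the Bessel-like chain, started at the origin at times $-2\ell$ and $0$ respectively, coupled independently until they meet and then coalesced. Because the increments are $\pm1$, the difference $X_t - Y_t$ cannot change sign without hitting $0$, so $X_t\ge Y_t$ for all $t\ge 0$; in particular $X_{2n}=0$ forces $Y_{2n}=0$, which gives monotonicity $u(n+\ell)\le u(n)$ for free. The gap $u(n)-u(n+\ell)$ is then the probability that $Y_{2n}=0$ while $X$ never touched $0$ on $[0,2n]$, and by independence before coalescence this is at most $u(n)\cdot\P(X_t\ne 0\ \forall t\in[0,2n])$. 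That last probability is estimated by a one-line renewal decomposition, $\P(\tau_1>n)\sum_{k<\ell}u(k)$, whose regular-variation asymptotics ($\sim L(n)/(\alpha n^\alpha)$ and $\sim C_\alpha \ell^\alpha/(\alpha L(\ell))$) plus Potter bounds give exactly $C(\ell/n)^{\alpha-\epsilon}$, hence any $\delta<\alpha$ in \eqref{eq:renass00+}. If you want to salvage your approach you would need to actually prove the H\"older/rate-equipped LLT; otherwise, the coupling argument is both shorter and requires only the soft renewal estimates already available from \eqref{eq:ass} and \eqref{un}.
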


\begin{proof}
Let $X$ and $Y$ be two copies of such a Markov chain,
starting at the origin at times $-2\ell$ and  $0$ respectively, so that
$$
u(n+\ell) = \p(X_{2n}=0) \qquad \mbox{and} \qquad u(n) = \p(Y_{2n}=0).
$$
(Although $X_n$ is defined for $n \ge -2\ell$, we only look at it for $n \ge 0$.)
We can couple $X$ and $Y$ such that they are independent until they meet, at which time they
coalesce. Since $X$ and $Y$ are nearest neighbor walks, $X_{2n}=0$ implies $Y_{2n}=0$, and
\begin{equation*}
\begin{split}
0  & \leq  \p(Y_{2n}=0) - \p(X_{2n}=0) = u(n) - u(n+\ell)  \\
& = \p(Y_{2n}=0\neq X_{2n}) = \p(Y_{2n}=0, X_t\neq 0 \ \forall\ t\in [0, 2n]) \\
& \leq \p(Y_{2n}=0) \p(X_t\neq 0 \ \forall\ t\in [0, 2n])  \ \leq\
u(n) \sum_{k=0}^{\ell-1} u(k) \p(\tau_1>n+\ell-k) \\
& \leq u(n) \p(\tau_1>n) \sum_{k=0}^{\ell-1} u(k),
\end{split}
\end{equation*}
where by the properties of regularly varying functions, see~\cite[Prop.~1.5.8 \& 1.5.10]{BGT87},
\begin{equation}\label{unsumlim}
\begin{aligned}
\P(\tau_1 >n) & = \sum_{k=n+1}^\infty \frac{L(k)}{k^{1+\alpha}} \sim
\frac{L(n)}{\alpha n^\alpha} \qquad \mbox{as } n\to\infty,  \\
\sum_{k=0}^{\ell-1} u(k) & = \sum_{k=0}^{\ell-1}
\frac{C_\alpha(1+o(1))}{L(k) k^{1-\alpha}} \sim \frac{C_\alpha \ell^\alpha}{\alpha L(\ell)}
\quad \mbox{as } \ell \to\infty.
\end{aligned}
\end{equation}
Observe that, for every $\epsilon' > 0$ there exists $n_0 < \infty$ such that
$L(n)/L(\ell) \le (n/\ell)^{\epsilon'}$ for $n \ge n_0$ and $\ell \le \frac{n}{4}$,
by Potter bounds \cite[Theorem 1.5.6]{BGT87}, hence
\begin{equation*}
	\frac{u(n) - u(n+\ell)}{u(n)} \le C \, \frac{L(n)}{L(\ell)} \, \bigg(\frac{\ell}{n}\bigg)^\alpha
	\le C \, \bigg(\frac{\ell}{n}\bigg)^{\alpha-\epsilon'},
\end{equation*}
and therefore \eqref{eq:renass00+} holds true for any $\delta<\alpha$.
\end{proof}

\section{An Integral Estimate}\label{app:int_est}

\begin{lemma}\label{L:integralbd} Let $\chi \in [0,1)$. Then there exist $C_1, C_2>0$ such
that for all $k\in\bbN$,
\begin{equation}\label{eq:intgbd1}
\idotsint\limits_{0<t_1<\cdots<t_k<1} \frac{\dd t_1\cdots \dd t_k}{t_1^{\chi}\cdots
(t_k-t_{k-1})^{\chi}(1-t_k)^{\chi}} \leq C_1 e^{-C_2k\log k}.
\end{equation}
Furthermore, for any $v\in (0,1)$ and $k_1, k_2\in\N_0:=\N\cup\{0\}$ with $k:=k_1+k_2\geq 1$,
\begin{equation}\label{eq:intgbd2}
\idotsint\limits_{0<t_1<\cdots t_{k_1}<v \atop v<t_{k_1+1}<\cdots<t_{k}<1} \frac{\dd t_1\cdots
\dd t_{k}}{t_1^{\chi}(t_2-t_1)^{\chi}\cdots (1-t_{k})^{\chi}}
\leq  C_1 e^{-C_2 k\log k}\ v^{(1-\chi)k_1} (1-v)^{(1-\chi)k_2}.
\end{equation}
\end{lemma}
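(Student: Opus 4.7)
To prove \eqref{eq:intgbd1} I plan to recognize the integral as a classical \emph{Liouville--Dirichlet integral}. Introducing gap variables $s_i := t_i - t_{i-1}$ for $i = 1,\ldots,k+1$ (with $t_0 := 0$ and $t_{k+1} := 1$), the Jacobian is unit and the domain becomes the open simplex $\{s_i > 0,\ \sum_{i=1}^{k+1} s_i = 1\}$. The integral reduces to
\[
\int_{s_1+\cdots+s_k<1,\,s_i>0} \prod_{i=1}^{k+1} s_i^{(1-\chi)-1}\,\dd s_1\cdots \dd s_k \,=\, \frac{\Gamma(1-\chi)^{k+1}}{\Gamma((k+1)(1-\chi))},
\]
by the Liouville formula. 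Stirling's asymptotic gives $\log\Gamma((k+1)(1-\chi)) = (1-\chi)\,k\log k + O(k)$, while the numerator is $e^{O(k)}$, yielding \eqref{eq:intgbd1} for any constant $0 < C_2 < 1-\chi$.

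For \eqref{eq:intgbd2}, the plan is to rescale each of the two blocks onto $(0,1)$ and deal separately with the ``bridge'' factor $(t_{k_1+1}-t_{k_1})^{-\chi}$. Substituting $t_i = v x_i$ for $1 \le i \le k_1$ and $t_i = v + (1-v) y_{i-k_1}$ for $k_1+1 \le i \le k$, with $x_1 < \cdots < x_{k_1} \in (0,1)$ and $y_1 < \cdots < y_{k_2} \in (0,1)$, the Jacobian is $v^{k_1}(1-v)^{k_2}$ and every \emph{internal} gap factor picks up $v^{-\chi}$ or $(1-v)^{-\chi}$, generating the target prefactor $v^{k_1(1-\chi)}(1-v)^{k_2(1-\chi)}$. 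The only object not scaling cleanly is the bridge, which becomes $((1-v)y_1 + v(1-x_{k_1}))^{-\chi}$. The main obstacle is that the naive bounds $\min((v-t_{k_1})^{-\chi},(t_{k_1+1}-v)^{-\chi})$ each leave a spurious $v^{-\chi}$ or $(1-v)^{-\chi}$ after rescaling, which would destroy the claimed product form.

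The key trick is to invoke \emph{weighted AM--GM} with weights $(1-v,v)$:
\[
(1-v)\,y_1 + v\,(1-x_{k_1}) \,\geq\, y_1^{\,1-v}\,(1-x_{k_1})^{\,v},
\]
so the bridge factor is bounded by $y_1^{-\chi(1-v)}(1-x_{k_1})^{-\chi v}$, with no multiplicative $v$- or $(1-v)$-dependence. After this estimate the $x$- and $y$-integrals decouple, each becoming a Liouville--Dirichlet integral with one perturbed edge exponent, e.g.
\[
\int_{0<x_1<\cdots<x_{k_1}<1} \!\!\! x_1^{-\chi}\cdots(x_{k_1}-x_{k_1-1})^{-\chi}(1-x_{k_1})^{-\chi v}\,\dd x \,=\, \frac{\Gamma(1-\chi)^{k_1}\,\Gamma(1-\chi v)}{\Gamma(k_1(1-\chi)+1-\chi v)},
\]
and symmetrically for $y$. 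Since $1-\chi v,\,1-\chi(1-v) \in [1-\chi,1]$ the $\Gamma(1-\chi v)$- and $\Gamma(1-\chi(1-v))$-factors are bounded uniformly in $v$; Stirling then bounds each half-integral by $C^{k_j} e^{-c\,k_j \log k_j}$. Combining via convexity of $x\log x$ (so $k_1 \log k_1 + k_2 \log k_2 \geq k \log(k/2)$) and absorbing the $C^k$ prefactor into the $e^{-C_2 k\log k}$ rate (for $k$ large; small $k$ is a finite check) yields the full bound. The edge cases $k_1 = 0$ or $k_2 = 0$ fit the same scheme with an empty block, the weighted AM--GM degenerating to $y_1^{-\chi}$ or $(1-x_{k_1})^{-\chi}$ respectively, and only a single Dirichlet half-integral survives.
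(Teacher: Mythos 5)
Your proposal is correct, and the first half (the Liouville--Dirichlet identity plus Stirling for \eqref{eq:intgbd1}) is exactly what the paper does. For \eqref{eq:intgbd2} you take a genuinely different route. The paper first integrates out the ``inner'' variables $t_1,\ldots,t_{k_1-1}$ and $t_{k_1+2},\ldots,t_k$ by scaling, writing the integral as a double integral over the boundary pair $(t_{k_1},t_{k_1+1})$ with coefficients $A_{k_1-1}\,t_{k_1}^{k_1(1-\chi)-1}$ and $A_{k_2-1}\,(1-t_{k_1+1})^{k_2(1-\chi)-1}$, and then handles the bridge factor $(t_{k_1+1}-t_{k_1})^{-\chi}$ by replacing it with $(t_{k_1+1}-v)^{-\chi}$ when $v<\tfrac12$ and with $(v-t_{k_1})^{-\chi}$ when $v\geq\tfrac12$; after this replacement the double integral factorizes, and the spurious $(1-v)^{-\chi}$ (resp.\ $v^{-\chi}$) it produces is harmlessly bounded because of the case split. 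Your weighted AM--GM trick $(1-v)y_1+v(1-x_{k_1})\geq y_1^{1-v}(1-x_{k_1})^v$ accomplishes the same decoupling in one stroke with no case split, at the cost of introducing $v$-dependent edge exponents $1-\chi v$ and $1-\chi(1-v)$; your observation that these stay in the compact interval $[1-\chi,1]$, so all the $\Gamma$-factors they introduce are uniformly bounded in $v$, is the crux point and is correct. Your recombination via convexity of $x\log x$ ($k_1\log k_1+k_2\log k_2\geq k\log(k/2)$) parallels the paper's remark that $\max\{k_1,k_2\}\geq(k_1+k_2)/2$. Both arguments are valid; yours has the modest advantage of treating all $v\in(0,1)$ uniformly, while the paper's is slightly more elementary in that it needs no perturbed Dirichlet integrals.
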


\begin{proof}
By the definition of the Dirichlet distribution
$$
\idotsint\limits_{0<t_1<\cdots<t_k<1} \frac{\dd t_1\cdots \dd t_k}{t_1^{\chi}\cdots
(t_k-t_{k-1})^{\chi}(1-t_k)^{\chi}} =\frac{(\Gamma(1-\chi))^{k+1}}{\Gamma\big((k+1)(1-\chi)\big)}.
$$
The bound \eqref{eq:intgbd1} then follows from the properties of the gamma function $\Gamma(\cdot)$.

If $k_1=0$, then \eqref{eq:intgbd2} can be obtained from \eqref{eq:intgbd1} noting that
$1/t_1^{\chi} \le (1-v)^\chi/(t_1-v)^{\chi}$ and performing a change of variable. The
same applies to the case $k_2=0$. If $k_1, k_2\geq 1$, then denoting the integral in \eqref{eq:intgbd1}
by $A_k$, we have
$$
\begin{aligned}
& \idotsint\limits_{0<t_1<\cdots t_{k_1}<v \atop v<t_{k_1+1}<\cdots<t_{k_1+k_2}<1}
\frac{\dd t_1\cdots \dd t_{k_1+k_2}}{t_1^{\chi}(t_2-t_1)^{\chi}\cdots (1-t_{k_1+k_2})^{\chi}} \\
= & \iint\limits_{0<t_{k_1}<v \atop v<t_{k_1+1}<1}  \frac{\dd t_{k_1}
\dd t_{k_1+1}}{(t_{k_1+1}-t_{k_1})^{\chi}}\ A_{k_1-1} t_{k_1}^{k_1(1-\chi)-1} A_{k_2-1}
(1-t_{k_1+1})^{k_2(1-\chi)-1} \\
\leq &\ \ C_1^2 e^{-C_2(k_1-1)\log
(k_1-1)}e^{-C_2(k_2-1)\log(k_2-1)} \!\!\!\!\!\!\!\!\!\!\!\!\!\! \iint\limits_{0<t_{k_1}<v<t_{k_1+1}<1}
\!\!\!\!\!\!\!\!\!\!\!\! \frac{t_{k_1}^{k_1(1-\chi)-1}(1-t_{k_1+1})^{k_2(1-\chi)-1}}
{(t_{k_1+1}-t_{k_1})^{\chi}} \dd t_{k_1} \dd t_{k_1+1} \\
\leq &\ \ C_3 e^{-C_4(k_1+k_2)\log(k_1+k_2)}\ v^{(1-\chi)k_1} (1-v)^{(1-\chi)k_2},
\end{aligned}
$$
where the last inequality is obtained by first noting that $\max\{k_1, k_2\}\geq (k_1+k_2)/2$,
and then replacing $t_{k_1+1}-t_{k_1}$ with $t_{k_1+1}-v$ (resp.\ $v-t_{k_1}$) if $v<1/2$
(resp.\ $v\geq 1/2$). In this way the integral factorizes and one obtains
\eqref{eq:intgbd2} after adjusting the values of $C_1$ and $C_2$.
\end{proof}

\bigskip

\noindent
{\bf Acknowledgements.} 
F.C. is partially supported by ERC Advanced Grant 267356 VARIS.
R.S. is supported by AcRF Tier 1 grant R-146-000-148-112.
N.Z. is supported by EPSRC grant EP/L012154/1 and Marie 
Curie International Reintegration Grant IRG-246809.


\end{document}